\newenvironment{Assumptions}
{%
\setcounter{enumi}{0}

\begin{enumerate}}%
{\end{enumerate} }
\newenvironment{Assumptions1}
{%
\setcounter{enumi}{0}

\begin{enumerate}}%
{\end{enumerate} }
\newenvironment{Assumptions3}
{
	\setcounter{enumi}{0}

	\begin{enumerate}}
	{\end{enumerate} }
 \newcommand{\grad}{\nabla}
\newcommand{\rn}{\mathbb{R}^{N}}
\newcommand{\zn}{\mathbb{Z}^{N}}
	\newcommand{\Z}{\mathbb{Z}}
\newcommand{\A}{\alpha}
\newcommand{\bA}{\bar\alpha}
\newcommand{\I}{\mathcal{I}}
\newcommand{\J}{\mathcal{J}}
\newcommand{\bj}{\mathbf{j}}
\newcommand{\Eta}{\eta^{\alpha}}
\def\N{\mathbb{N}}
\newtheorem{thm}{Theorem}[section]
\newtheorem{lem}[thm]{Lemma}
\newtheorem{prop}[thm]{Proposition}
\newtheorem{cor}[thm]{Corollary}
\theoremstyle{definition}
\newtheorem{rem}[thm]{Remark}
\numberwithin{equation}{section}
\newcommand{\be}{\color{blue}}
\newcommand{\ee}{\normalcolor}
\begin{document}

\title[Precise bounds for fractional HJB equations]
{Precise error bounds for numerical approximations of fractional HJB equations}

\author{Indranil Chowdhury}
\address{Indian Institute of Technology - Kanpur, India}
\curraddr{}
\email{indranil@iitk.ac.in}
\thanks{I.C. was supported by ERCIM ``Alain Bensoussan" Fellowship programme and INSPIRE faculty fellowship (IFA22-MA187).}

\author{Espen R. Jakobsen}
\address{Norwegian University of Science and Technology, Norway}
\curraddr{}
\email{espen.jakobsen@ntnu.no}
\thanks{E.R.J. received funding from the Research Council of Norway under Grant Agreement No. 325114 “IMod. Partial differential equations, statistics and data: An interdisciplinary approach to data-based modelling”.}

\subjclass[2020]{49L25, 35J60, 34K37, 35R11, 35J70, 45K05, 49L25, 49M25, 93E20, 65N06, 65R20, 65N15, 65N12}



\keywords{Fractional and nonlocal equations, fully nonlinear equation, HJB equations, degenerate equation, weakly non-degenerate equation, stochastic control, L\'evy processes, error estimate, rate of convergence, viscosity solution, numerical method, monotone scheme, powers of discrete Laplacians. }

\date{}

\dedicatory{}

\begin{abstract}
We prove precise rates of convergence for  monotone approximation schemes of fractional and nonlocal Hamilton-Jacobi-Bellman (HJB) equations. We consider  diffusion corrected difference-quadrature schemes from the literature and new approximations based on powers of discrete Laplacians, approximations which are (formally) fractional order and 2nd order methods. It is well-known in numerical analysis that convergence rates depend on the regularity of solutions, and here we consider cases with varying solution regularity: (i) Strongly degenerate problems with Lipschitz solutions, and (ii) weakly non-degenerate problems where we show that solutions have bounded fractional derivatives of order $\sigma\in(1,2)$. Our main results are optimal error estimates with convergence rates that capture precisely both the fractional order of the schemes and the fractional regularity of the solutions. 
For strongly degenerate equations, 
these rates improve earlier results. For weakly non-degenerate 
problems of order greater than one, the results are new. Here we show improved rates 
compared to the strongly degenerate case, rates that are always better than $\mathcal{O}\big(h^{\frac{1}{2}}\big)$.
\end{abstract}

\maketitle

\tableofcontents

\section{Introduction}
In this paper we prove 
precise rates of convergence for  monotone approximation schemes of fractional and nonlocal Hamilton-Jacobi-Bellman (HJB) equations. Weakly non-degenerate problems are studied, and we give error bounds with convergence rates capturing both the fractional orders of accuracy of schemes and regularity of solutions.

HJB equations are fully nonlinear possibly degenerate PDEs from optimal control theory with a large number of applications in engineering, science,  economics etc. \cite{Be:book,FS:book,OS:book,Ha:book}. In this paper we focus on the following nonlocal version:
\begin{align}\label{eqn:main}
 &  \sup_{\A\in\mathcal{A}} \left\{ f^{\A}(x) +c^{\A}(x) u(x)  - \I^{\A}[u](x) \right\}  = 0  \mbox{\quad in} \quad \rn, 
 \end{align}
where $N\in\N$, $\mathcal{A}$ is a compact metric space, the integral operator 
\begin{align} \label{n_local:term}
\I^{\A}[\phi](x) := \int_{\rn\backslash\{0\}} \Big( \phi(x+\eta^{\A}( z)) - \phi(x) -\eta^{\A}(z)\cdot \nabla_x\phi(x)\,1_{|z|<1}\ee\Big) \, \nu_{\A}(dz),
\end{align}
and the L\'evy measure $\nu_{\A}$ is  nonnegative with $\int |z|^2\wedge 1\ \nu_\alpha(dz)<\infty$. 
The operator $\I^{\A}$ is a fractional (convection-)diffusion operator of maximal order $\sigma \in (0,2)$.
Our assumptions encompass fractional Laplacians, 
tempered operators from finance, and any other generator of a pure jump L\'evy process. The coefficients are bounded, continuous, and $x$-Lipschitz uniformly in $\alpha$, see Section \ref{sec:assump_wellposed} for all assumptions. 

Equation \eqref{eqn:main} is the dynamic programming equation for the value function of an infinite horizon optimal stochastic control problem \cite{OS:book,Ha:book}:
$$u(x)=\inf_{\alpha_\cdot\in \mathcal A_{\text{ad}}} \int_0^\infty e^{-\int_0^s c^{\alpha_r}(X_r)dr}f^{\alpha_s}(X_s)\,ds,$$
where $\mathcal A_{\text{ad}}$ is the set of admissible controls and the (controlled) process $X_s$ is given by a L\'evy driven SDE \cite{Ap:Book,CT:Book} of the form
\begin{align}\label{SDE}dX_s = x + b^{\alpha_s}\,ds+\int_{|z|<1} \eta^{\alpha_s}(z)\tilde N(\alpha_s,dz,ds)+\int_{|z|\geq1} \eta^{\alpha_s}(z)N(\alpha_s,dz,ds).
\end{align}
The (compensated) Poisson random measure $N$ ($\tilde N$) of the driving L\'evy process 
has an intensity/L\'evy measure $\nu_a$ such that $\mathbb E\big[ N(a,B,(0,t))\big]=\nu_a(B)t$\footnote{This is the expected number of jumps $z\in B$ of the L\'evy process up to time $t$ \cite{Ap:Book,OS:book}.} for Borel sets $B\not\ni 0$ and $t>0$. For simplicity we focus on pure (jump) diffusion processes with $b=0$ and HJB equations of the type \eqref{eqn:main}, but at the end of the paper we give results for more general equations with $b\neq0$. 

The operator $\I^{\A}$ will always be at least degenerate elliptic under our assumptions. When we also assume (loosely speaking) that
$$\big(\tfrac{d\nu_\A}{dz}, \eta^\A(z)\big) \to \big(\tfrac{c_1}{|z|^{N+\sigma}}, c_2z\big)\qquad\text{as}\qquad z\to0,$$
$\I^{\A}$ will be non-degenerate and uniformly elliptic.
We refer to \ref{C1} and \ref{A6} for precise assumptions. The HJB equation \eqref{eqn:main} is {\it strongly degenerate} if the operators $\I^{\A}$ are degenerate for every $\alpha$,\footnote{E.g. there could be no diffusion in some directions, or the operator could be a $0$ order operator with bounded L\'evy measure. There could be different degeneracies for different $\alpha$'s.} and it is {\it weakly non-degenerate} if there is at least one $\alpha$ for which $\I^{\A}$ is elliptic/non-degenerate. Obstacle problems for elliptic operators are examples of weakly non-degenerate problems \eqref{eqn:main}, and they are known to have non-smooth solutions (at the contact set).
The correct (weak) solution concept for this type of problems is viscosity solutions \cite{JK05,JK06,BI08}. Wellposedness, regularity, asymptotics, approximations, and other properties of viscosity solutions for nonlocal PDEs has been intensely studied in recent years. Regularity in the strongly degenerate case comes from comparison type of arguments and typically gives preservation of the regularity of the data \cite{JK06}. Solutions can then be at most Lipschitz continuous. In non-degenerate cases there is a regularizing effect. The regularity theory has mostly been developed for uniformly elliptic/parabolic problems, and the huge literature includes seminal works of Caffarelli and Silvestre \cite{CafSilv09, CafSilv11}. In the weakly non-degenerate case there are few results, 
and most relevant for us (our inspiration) is \cite{D-Krylov05}  for local problems.
We show here that weakly non-degenerate problems of order $\sigma\in(1,2)$ have solutions with bounded fractional derivatives of order $\sigma$.\footnote{We assume that the data is semiconcave to achieve this.} Hence solutions are more smooth than in the strongly degenerate case. Independently, similar type of regularity results have been obtained in the very recent preprint \cite{RW:2023} on nonlocal obstacle problems.

There is a huge literature on numerical methods for local HJB equations including finite differences, semi-Lagrangian, finite elements, spectral, Monte Carlo, 
and many more, see e.g. \cite{Crandall-lions,DK:book,FF:book,BS91,Le00,BH:2001,BZ03,Deb_Jak:13,SS:2013,BT2004,EHJ2017}.
For fractional and nonlocal problems, there is the added difficulty of
discretizing the fractional and nonlocal operators in a monotone, stable, and
consistent way. These operators are singular integral operators, and
can be discretized by quadrature after truncating the singular part and correct with a
suitable second derivative term. This diffusion corrected
approximation was introduced on the level of processes in \cite{AR01}
and then for linear PDEs e.g. in \cite{Cont-2005} in connection with
difference-quadrature schemes, see also \cite{JKL08,BJK1,Oberman2016}. In the setting of
HJB equations, it was introduced in
\cite{JKL08,C-Jakobsen2009,BJK1}
with further developments in e.g.
\cite{BCJ1, Coc-Risebro16, Reisinger2021, Roxana-Reisinger2021}. We will give new results for this approximation here, and focus on a version based on semi-Lagrangian type approximations \cite{Camilli-Falcone,Deb_Jak:13} of the nonlocal operators \cite{C-Jakobsen2009}. Another way of discretizing certain
fractional operators, is via subordination: When the operator is a
fractional Laplacian, it can be discretized by a (fractional) power of
the discrete (FDM) Laplacians which can be seen as a quadrature rule with
explicit weights \cite{Ciaurri-Stinga-02}. While 
the diffusion corrected approximation has fractional order accuracy,
the power of discrete Laplacian approximation is always of second
order and faster when the order of the equation is close to $2$. This last
approximation has previously been used to solve linear and
porous medium equations \cite{EJT18b,BP15}. In this paper, we will explain how
it can be used to solve HJB equations and provide error bounds.

The main focus of the paper is on precise error bounds for the schemes and regularity settings mentioned above, especially the weakly non-degenerate case. In numerical analysis it is well-known that such bounds must depend on both the accuracy of the method and the regularity of solutions. In our fractional setting, both of these may be fractional, and previous results are either not optimal or lacking. While linear, local, and smooth problems can analyzed in a rather simple and classical way \cite{RM:book}, error analysis is more complicated in our fully nonlinear and non-smooth setting. There are
two main approaches:\footnote{In the
  uniformly elliptic case, there are other methods \cite{CafSoug, Krylov:2015, 
   Tura15b}. These results are not explicit nor optimal, but they apply
  also to nonconvex problems. See also \cite{Jakobsen2004, Bon-Zid06, Jakobsen2006}.} (i) The `doubling of
variables' technique for fully nonlinear equations of
1st order \cite{Dolcetta-Falcone, Crandall-lions, Souga} or fractional
order less than 2 \cite{BCJ1,Coc-Risebro16}; 
 and (ii) the 'shaking of coefficients' method for convex HJB
 equations of 2nd order \cite{BJ02,BJ05,  D-Krylov05,
   Krylov:1997,Krylov:2000yy,Krylov:2005lj} or fractional order 
 \cite{BJK2,BJK1,JKL08}.  
 
 The `shaking of coefficients' method,
 originally introduced by Krylov, relies on constructing smooth
 subsolutions of both the equation and the scheme which can then be used to
 get one-sided error estimates via the comparison principle and local
 consistency bounds. If precise regularity results for both the scheme and the equation are known, along with sharp consistency bounds, the method produces optimal rates. We refer to \cite{D-Krylov05,Jakobsen2003, Krylov:2005lj} for local
 2nd order problems and \cite{BJK2,C-Jakobsen2009, JKL08} for nonlocal
 problems. If regularity of the scheme is not known (this is difficult in general), sub-optimal rates can still be proved \cite{BJ02,BJ05,BJK1}, and these latter bounds holds for a very large class of monotone schemes.
 Note that the 'shaking of coefficients' method has the advantage that it can handle arbitrary high order error equations and therefore also higher order methods, while the 'doubling of variables' method only work optimally for schemes with (at most) 2nd order truncation errors.  
 For nonlocal HJB equations, most of the progress on 
 optimal error bounds for monotone schemes have addressed
 bounded (non-singular) integral operators 
 \cite{BJK2, JKL08}. Non-optimal bounds for problems with singular
 operators can then be obtained after first approximation by bounded
 operators. Without this approximation step, sub-optimal rates have
 been obtained in \cite{BJK1} for singular integral operators.
\medskip

\noindent \textbf{Our main contributions:} 

\medskip

\noindent (a) A rigorous {\em error analysis for monotone approximations of weakly non-degenerate problems} is developed in Section \ref{sec_prf_gen}. This is new and based on the ``method of shaking the coefficients". The proof amounts to extending the analysis of \cite{D-Krylov05} to nonlocal/fractional equations and schemes. Our setting is more involved and technical. The main challenges are related to the {\em fractional} approximation, regularization, and regularity results needed -- both for the equation and the scheme. As opposed to previous nonlocal results, we cannot use standard mollifiers for regularization but crucially need fractional heat kernels. 
For the schemes, the results are discrete and contain error terms, and a very careful analysis is needed to get optimal results. 
\medskip

\noindent (b) {\em $C^{1,\sigma-1}$-regularity results for weakly non-degenerate HJB-equations} of order $\sigma\in(1,2)$ given in Theorem \ref{thm:regularity:vis_soln:wdeg:nonlocal}. These are natural extensions to nonlocal/fractional problems of the $W^{2,\infty}$ results of \cite{D-Krylov05}. They seem to be new for equations of fractional order (but see also \cite{RW:2023}) and are of independent interest.
Our proof is based on uniform estimation of approximate fractional derivatives based on semi-concavity estimates and exploitation of weak non-degeneracy followed by an application of regularity results for linear problems in \cite{Oton2016}. We also need and prove discrete versions of such results. 
\medskip

\noindent (c) Precise error bounds for {\em diffusion corrected difference-quadrature schemes} in Section \ref{monotone scheme}. Under various assumptions, we roughly speaking show that if $\sigma$ is the order of equation \eqref{eqn:main}, $u$ its solution, and $u_h$ the solution of scheme, then
\begin{align*}
\|u- u_{h}\|_{L^\infty} \leq \left\{
       \begin{array}{ll}
            C \, h^{\frac{1}2(4-\sigma)} \quad &\mbox{when solutions are smooth ($C^4_b$)},      \\[0.2cm]
      C \, h^{\frac{\sigma}{4+ \sigma}(4-\sigma)} \quad &\mbox{in the weakly non-degenerate case and $\sigma>1$},\\[0.2cm]      C \, h^{\frac{1}{4+\sigma}(4-\sigma)} \quad &\mbox{in the strongly degenerate case or when $\sigma\leq1$}. 
       \end{array}
  \right.
\end{align*}
Here the accuracy is a decreasing function of $\sigma$, which is reflected in decreasing rates in $\sigma$ when the regularity is fixed (strongly degenerate and smooth cases). In the weakly non-degenerate case, regularity is increasing with $\sigma$ and so are the rates despite decreasing accuracy. Rates are higher when solutions are more regular and maximal in the smooth case. 
These results are sharper than previous results \cite{BJK2, BJK1,BCJ1} in the strongly degenerate case, and new in the weakly non-degenerate case where 
the rate increases from $\frac35$ at $\sigma=1$ to $\frac23$ in the limit as $\sigma\to2$.\!\!\!
\medskip

\noindent (d) New {\em approximations based on powers of discrete Laplacians} are introduced in Section \ref{sec:fraclap} for HJB equations with fractional Laplacians, $\mathcal I^\alpha[\phi]=-a^\alpha (-\Delta)^{\frac\sigma 2}\phi$.
 These problems are always weakly non-degenerate, and we prove precise error bounds,
\begin{align}
\|u- u_{h}\|_{L^\infty} \leq \left\{
       \begin{array}{ll}
      C h^{\frac{1}{2}} \quad &\mbox{for} \quad 0< \sigma \leq 1, \\[0.2cm]
      C h^{\frac{\sigma}{2}} \quad &\mbox{for} \quad 1< \sigma <2 .
       \end{array}
  \right.
\end{align} 
Under our assumptions these rates are  optimal, and as $\sigma\rightarrow 2$, the error bounds approach the $\mathcal{O}(h)$ bound in the local 2nd order case \cite{D-Krylov05}.\footnote{When $\sigma\to2$, problem \eqref{eqn:main} converges by \cite{CJ17} to local the 2nd order problem of \cite{D-Krylov05}.}
\medskip
  
\noindent \textbf{Outline.} The remaining part of this paper is organized as follows: In Section \ref{sec:assump_wellposed} we introduce the notation and assumptions for the strongly degenerate and weakly non-degenerate problems, and give  wellposedness and regularity results for equation \eqref{eqn:main} in both cases.  In Section \ref{monotone scheme} we consider the diffusion corrected difference-quadrature approximations of \eqref{eqn:main} for general nonlocal operators and state our main error bounds. In Section \ref{sec:fraclap} we give the results for approximation based on powers of discrete Laplacians. The proofs of these results are given in Sections \ref{sec_prf_gen} and \ref{sec:prf:flap}. 
In Section \ref{sec:exten} we discuss 
extensions to problems with non-zero drift and more non-symmetric diffusions.    

\section{Strongly and weakly non-degenerate fractional HJB equations}\label{sec:assump_wellposed} \ee

In this section we present the  assumptions on nonlocal HJB equations and give wellposedness and regularity results. We start by introducing some notation. By $C,K$ etc. we mean various constants which may change from line to line, $|\cdot|$ is the euclidean norm, and the norms $\|u\|_0= \sup _{x} |u(x)|$ and $\|u\|_1 = |u|_0 + \sup_{x\neq y} \frac{|u(x)-u(y)|}{|x-y|}$. $C_b(Q)$ is the space of bounded continuous 
functions on $Q\subset \rn$, while $C^{n}(Q)$ and $C^{n, \gamma}(Q)$ for $n \in \N$ and $\gamma \in (0,1]$,  denote the spaces of $n$-th time continuously differentiable functions on $Q$ with finite norms
$$ \|u\|_{n}= \sum_{j=0}^n \|D^{j}u\|_0 \qquad \mbox{and}\qquad \|u\|_{n,\gamma}= \|u\|_{n} + \sup_{x\neq y} \frac{|D^n u(x)-D^n u(y)|}{|x-y|^{\gamma}}, $$
where $D^n u$ is the ($n$-form of) $n$-th order derivatives of $u$.

\subsection{Assumptions and wellposedness of \texorpdfstring{\eqref{eqn:main}}{}}

First we list assumptions needed for wellposedness and Lipschitz regularity of viscosity solutions of \eqref{eqn:main}. \smallskip
\begin{Assumptions}
\item \label{A1} $\mathcal{A}$ is a separable metric
 spaces, $c^{\A}(x) \geq \lambda> 0$, and $c^{\A}(x),f^{\A}(x)$, and $\eta^{\alpha}(z)$ are continuous in $\A$, $x$,  and $z$. 
 
 \medskip

\item \label{A2} There is a $K>0$ such that 
$$  \|f^{\A}\|_{1}+ \|c^{\A}\|_{1} + \|\eta^{\A}\|_0 \leq K\quad\text{for}\quad \A\in \mathcal{A}. $$ 

\medskip

\item \label{A3} There is a $K>0$ such that 
\begin{align*}
 \quad |\Eta(z)| \leq K |z| \quad \mbox{for} \quad |z|<1,\quad \A\in \mathcal{A}.
\end{align*}

\medskip 

 \item\label{A4}  $\nu_{\A}$ is a nonnegative Radon measures on $\rn$ and there is $K>0$ such that 
  \begin{align*}
   \int_{|z|\leq1} |z|^2\nu_{\A}(dz)+ \int_{|z|>1} \nu_{\A}(\,dz) \leq K.
  \end{align*}
\end{Assumptions}
 
In some results we also need symmetry assumptions on the nonlocal terms and upper bounds on the density of the L\'evy measure.
\smallskip
\begin{Assumptions}
\setcounter{enumi}{4}
\item \label{A5} 
$\nu_{\A}(dz)\,1_{|z|<1}$ is symmetric for $\A \in \mathcal{A}$. \ee

\medskip
\item\label{A6}
$\nu_{\A}$ is absolutely continuous on $|z|<1$, and 
there are 
$\sigma \in (0,2)$, $M\in \mathbb{N}$, and $C >0$ such that 
  \begin{align*}
  0\le \frac{d\nu_{\A}}{dz} \le \frac{C}{|z|^{M+\sigma}}\qquad\text{for}\qquad|z|<1,\quad \A\in\mathcal{A}.
\end{align*}

  \medskip 

  \item \label{A7}  $\eta^{\A}(-z)= - \eta^{\A}(z) $ \ for \ $|z|<1$ and $\A \in \mathcal{A}$. 
\end{Assumptions}


\begin{rem}
(a) Under \ref{A3} and \ref{A4}, {\em any} pure jump L\'evy process is allowed as a driver for the SDE \eqref{SDE}. This includes stable, processes, tempered processes, spectrally one-sided process, compound Poisson processes, and most jump processes considered in finance \cite{Ap:Book,CT:Book}.
The generators of these processes are $\I^{\A}$. 
\medskip

\noindent(b) Assumption \ref{A6} is a restriction implying that $\I^{\A}$ (which may be degenerate) contains fractional derivatives of orders at most $\sigma$. It can be replaced by a more general integral condition to also cover non-absolutely continuous L\'evy measures,
 $$ \displaystyle r^{-2+\sigma}\int_{|z|<r} |z|^2 d\nu_{\A} + r^{-1+\sigma}\int_{r<|z|<1} |z| d\nu_{\A} + r^{\sigma}\int_{r<|z|<1} d\nu_{\A} \leq C$$
for some $C>0$ independent of $\A$ and $r\in (0,1)$.  This condition is satisfied e.g. sums of one-dimensional operators (possibly of different orders) satisfying 
\ref{A6}.
\medskip

\noindent (c) By symmetry \ref{A5} and \ref{A7} it is clear that $\int_{\delta<|z|<1} \eta^{\A}(z) \, \nu_{\A}(dz) =0$. Hence we can also define $\I^\A$ in \eqref{n_local:term} using principal values and dropping the gradient (compensator) term. 
\medskip

 \noindent(d) Note that \ref{A3}--\ref{A7} give no restrictions on the tails of the L\'evy measures and the nonsingular part of the nonlocal operators. This possibly non-symmetric part could be the generator of any compound Poisson process.\ee

\medskip
\noindent(e) The fractional Laplacian $-(-\Delta)^{\frac\sigma2}$, where $\eta^\A=z$ and $\nu(dz)=\frac{c_{\alpha,N}}{|z|^{N+\sigma}}dz$, is a special case satisfying all assumptions \ref{A3}--\ref{A7}, see also section \ref{sec:fraclap}.
%
%
\end{rem}

A definition and general theory of viscosity solution for the nonlocal equations like \eqref{eqn:main} can be found e.g. in \cite{JK05,BI08}, but we do not need this generality here. In particular since there is no local diffusion, we could follow the simpler (comparison) arguments of \cite{CJ17}. Wellposedness and Lipschitz regularity for solutions of equation \eqref{eqn:main} are given in the next result. 
\begin{prop}\label{thm:viscosity_exist}
Assume \ref{A1}- \ref{A4}. 
\begin{itemize}
\item[(a)]  
If $u$ and $v$ are bounded upper semicontinuous viscosity subsolution and bounded lower semicontinuous supersolution of \eqref{eqn:main}, then 
$$u\leq v \quad in \quad \rn. $$
\item[(b)] There exists a unique viscosity solution $u\in C_b(\rn)$ of equation \eqref{eqn:main}.
\smallskip
\item[(c)] The viscosity solution $u$ of \eqref{eqn:main} is Lipschitz continuous,
$$\qquad\|u\|_0\leq \frac1\lambda\sup_{\A \in \mathcal{A}}\|f^{\A}\|_0,\qquad
\|Du\|_0\leq \frac1\lambda\sup_{\A \in \mathcal{A}}\big(\|Df^{\A}\|_0+\|Dc^{\A}\|_0\|u\|_0\big).$$
 
\end{itemize}

\end{prop}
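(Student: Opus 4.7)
The plan is to prove the three parts in order, since (b) and (c) both rely on the comparison principle of (a).

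For part (a), I would use the standard doubling-of-variables technique adapted to nonlocal equations, as in \cite{JK05,BI08,CJ17}. Assume for contradiction that $\sup_{\rn}(u-v)>0$. Since both functions are bounded and semicontinuous and no local diffusion appears, I would consider the penalized functional
$$\Phi_{\varepsilon,\delta}(x,y)=u(x)-v(y)-\tfrac{|x-y|^2}{\varepsilon^2}-\delta(|x|^2+|y|^2),$$
take a maximizer $(x_{\varepsilon,\delta},y_{\varepsilon,\delta})$, and use the nonlocal maximum principle to produce test-function jets. The nonlocal term is handled in the usual way by splitting $\mathcal{I}^\A$ into a small-jump part (estimated via Taylor expansion using the smoothness of the quadratic/quartic test function) and a large-jump part (estimated using the boundedness of $u,v$ and \ref{A4}). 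Using the equations for $u$ and $v$, subtracting and taking $\delta\to 0$ then $\varepsilon\to 0$, the Lipschitz assumption \ref{A2} on $f^\A$ and $c^\A$ produces an $O(|x-y|)$ error that vanishes, and the strictly positive term $\lambda(u-v)$ coming from $c^\A\geq\lambda>0$ forces $\sup(u-v)\leq 0$, a contradiction. The main subtlety is the treatment of the singular nonlocal integral near $z=0$, where the common use of $\eta^\A(z)\cdot\nabla\phi\,1_{|z|<1}$ as compensator and \ref{A4} keep things integrable.

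For part (b), existence follows from Perron's method once (a) is in hand. Constants $\pm\lambda^{-1}\sup_\A\|f^\A\|_0$ are respectively a supersolution and a subsolution of \eqref{eqn:main} by \ref{A1} and the sign of $c^\A$, so Perron yields a solution $u$ whose USC and LSC envelopes are sub/supersolutions; comparison from (a) then forces them to coincide, giving $u\in C_b(\rn)$. Uniqueness is immediate from (a).

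For part (c), the $L^\infty$ bound is read off directly from the two constants used in Perron above. For the Lipschitz estimate I would exploit translation invariance of $\mathcal{I}^\A$ (since $\eta^\A$ does not depend on $x$). For any fixed $e\in\rn$, the translate $u(\cdot+e)$ solves \eqref{eqn:main} with data $f^\A(\cdot+e),c^\A(\cdot+e)$. Setting $w(x):=u(x)+L|e|$ with $L$ to be chosen, one checks using \ref{A2} that $w$ is a supersolution of the equation for $u(\cdot+e)$ provided
$$\lambda L|e|\ \geq\ \sup_\A\bigl(\|Df^\A\|_0+\|Dc^\A\|_0\|u\|_0\bigr)|e|,$$
so that the choice $L=\lambda^{-1}\sup_\A(\|Df^\A\|_0+\|Dc^\A\|_0\|u\|_0)$ together with comparison yields $u(x+e)-u(x)\leq L|e|$; swapping $e\mapsto -e$ gives the reverse bound and hence $\|Du\|_0\leq L$.

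I expect the main obstacle to be the doubling-of-variables argument in part (a): ensuring the nonlocal operator is well-defined on the sum of the $C^2$ test function and the original (merely semicontinuous) solution, and splitting the singular integral so that the $|z|<r$ part is controlled by $r^{2-\sigma}$-type quantities uniformly in $\A$ via \ref{A4}, while the $|z|\geq r$ part gives the required difference of $u$'s and $v$'s. Once this is handled cleanly, (b) and (c) are essentially corollaries.
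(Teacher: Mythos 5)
Your proposal is correct and takes essentially the same route as the paper, which simply cites \cite{CJ17} (Theorems 2.1, 2.3 and Corollary 2.3) for parts (a), (b) and the $L^\infty$ bound --- results proved there by exactly the doubling-of-variables comparison and Perron arguments you sketch. For the gradient bound, the paper's remark that it ``follows by the comparison principle in a standard way'' is precisely your translation argument comparing $u(\cdot+e)$ with $u+L|e|$ for $L=\lambda^{-1}\sup_{\A}\bigl(\|Df^{\A}\|_0+\|Dc^{\A}\|_0\|u\|_0\bigr)$.
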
 

\begin{proof}
We refer to \cite{CJ17} Theorems 2.1, 2.3, and Corollary 2.3 for the proof (see also \cite{Ishii-R-21}) of parts (a), (b), and the first part of (c). The second estimate in (c) follows by the comparison principle in a standard way. 
\end{proof}

\subsection{Extra regularity for weakly non-degenerate equations}
A weakly non-degenerate version of \eqref{eqn:main} is
\begin{align}\label{eqn:main:wdeg:nonlocal}
\lambda u(x) + \sup_{\A \in \mathcal{A}} \left\{ f^{\A}(x) - \, \mathcal{I}^{\A} [u](x) \right\} = 0, 
\end{align}
where to simplify we have set $c^{\A}(x)\equiv \lambda>0$. We assume slightly more regularity of $f$ and weak degeneracy in the following sense:
\begin{Assumptions3}
\item \label{C1} \textbf{Weak-degeneracy:} There are $\A_0\in \mathcal{A}$, $c_{\A_0}>0$, and $K\geq 0$, such that 
\begin{align*}
    & (i)\quad \frac{d{\nu}_{\A_0}}{dz} \geq \frac{c_{\A_0}}{|z|^{N+\sigma}} & \mbox{for} \quad |z|<1, \\
    & (ii)\quad |\eta^{\A_0}(z) - \eta^{\A_0}(0) -  z|\leq K |z|^2 & \mbox{for} \quad |z|<1.
\end{align*}
\item \label{C3} There is $\beta > (\sigma-1)^+$ and $K>0$ such that 
$\|f^{\A}\|_{1,\beta} \leq K$ for every $\A\in \mathcal{A}$. 
\end{Assumptions3}

\begin{rem}
(a) Assumption \ref{C1} is a lower bound on the order of differentiability of $\I^{\A_0}$ and implies that it is elliptic/non-degenerate. The lower bounds behaves as $z\to0$ as the $\frac\sigma 2$-fractional Laplacian.
\smallskip

\noindent (b) weakly non-degenerate in \ref{C1} means that there is at least one $\alpha_0$ such that $\I^{\A_0}$ is non-degenerate. If $\I^{\A}$ is non-degenerate for all $\A$, with uniform bounds in \ref{C1}, then equation \eqref{eqn:main} is (uniformly/strongly) non-degenerate and have classical solutions. 
\end{rem} \ee

We prove our regularity results via an approximate problem where the L\'evy measure is truncated near origin: 
\begin{align}\label{eqn:aux:wdeg:nonlocal}
 \lambda u(x) + \sup_{\A \in \mathcal{A}} \left\{ f^{\A}(x) -  \, \mathcal{I}^{\A,r} [u](x) \right\} = 0 \qquad\text{in}\qquad \rn,
\end{align}   
where $\mathcal{I}^{\A,r}$ is defined by 
$$\mathcal{I}^{\A,r} \phi(x):= \int_{|z|>r} \Big( \phi(x+\eta^{\A}(z)) - \phi(x) \Big) \, \nu_{\A}(dz). $$
Note that $\I^{\alpha, r}$ is a bounded operator, well-defined for bounded functions, and then that viscosity solutions of equation \eqref{eqn:aux:wdeg:nonlocal} also will be pointwise/classical solutions. This problem is well-posed by Proposition \ref{thm:viscosity_exist}, and we have the following stability and approximation results:
\begin{lem} \label{lemma:auxeqn:wdeg:nonlocal}
Assume \ref{A1}-\ref{A4}, \ref{A6}, $u_{r}$ and $u$ are the unique bounded solutions of \eqref{eqn:aux:wdeg:nonlocal} and \eqref{eqn:main:wdeg:nonlocal}. Then there is a $C>0$ independent of $r$ such that
\begin{align*}
\|u_{r}\|_{0,1} \leq \frac{1}{\lambda} \sup_{\A\in\mathcal A}\|f^{\A}\|_{0,1} \qquad\text{and} \qquad \|u-u_{r}\|_0 \leq C \, r^{1-\frac{\sigma}{2}}.
\end{align*}
\end{lem}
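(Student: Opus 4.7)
The plan is to deduce Part (a) directly from Proposition~\ref{thm:viscosity_exist} applied to the truncated equation, and then to prove Part (b) by mollifying $u_r$ (and $u$) and controlling the truncation error $\I^\A-\I^{\A,r}$ on the resulting smooth function.

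For Part (a), the truncated L\'evy measure $\nu_\A\mathbf{1}_{\{|z|>r\}}$ still satisfies \ref{A4}. By \ref{A5} and \ref{A7}, $\int_{r<|z|<1}\eta^\A(z)\,\nu_\A(dz)=0$, so adding the compensator on $\{r<|z|<1\}$ leaves $\I^{\A,r}$ unchanged, and \eqref{eqn:aux:wdeg:nonlocal} falls under Proposition~\ref{thm:viscosity_exist}. Since $c^\A\equiv\lambda$ has $Dc^\A=0$, parts (b)--(c) of that proposition deliver $\|u_r\|_{0,1}\le \frac{1}{\lambda}\sup_\A\|f^\A\|_{0,1}$ uniformly in $r$.

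For Part (b), the key estimate is that for $\phi\in C^2_b(\rn)$ and $r\in(0,1)$, by \ref{A5}--\ref{A7} the compensator on $\{r<|z|<1\}$ cancels, yielding
\begin{equation*}
(\I^\A - \I^{\A,r})[\phi](x) = \int_{|z|<r}\big(\phi(x+\eta^\A(z))-\phi(x)-\eta^\A(z)\cdot\nabla\phi(x)\big)\,\nu_\A(dz),
\end{equation*}
and a second-order Taylor expansion combined with \ref{A3} and \ref{A6} gives the uniform bound $|(\I^\A-\I^{\A,r})[\phi]|\le C\, r^{2-\sigma}\|D^2\phi\|_0$. Now take a standard mollifier $\rho_\epsilon$ at scale $\epsilon$, set $u_r^\epsilon := u_r*\rho_\epsilon$, and convolve the (pointwise) equation for $u_r$ with $\rho_\epsilon$: translation invariance of $\I^{\A,r}$ together with subadditivity of $\sup_\A$ yields $\lambda u_r^\epsilon+\sup_\A\{f^\A*\rho_\epsilon-\I^{\A,r}[u_r^\epsilon]\}\le 0$. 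The swap $f^\A*\rho_\epsilon\mapsto f^\A$ costs $C\epsilon$ (by \ref{A2}), and the swap $\I^{\A,r}\mapsto \I^\A$ costs at most $C r^{2-\sigma}\|D^2 u_r^\epsilon\|_0\le C r^{2-\sigma}/\epsilon$ using Part (a). Hence $u_r^\epsilon-\lambda^{-1}C(\epsilon+r^{2-\sigma}/\epsilon)$ is a smooth (hence viscosity) subsolution of \eqref{eqn:main:wdeg:nonlocal}, and comparison (Proposition~\ref{thm:viscosity_exist}(a)) together with $\|u_r-u_r^\epsilon\|_0\le C\epsilon$ gives $u_r-u\le C(\epsilon+r^{2-\sigma}/\epsilon)$.

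The reverse bound is obtained by mollifying $u$ instead: a standard viscosity-subsolution-under-convolution argument, exploiting the translation invariance of $\I^\A$ and convexity of the sup in the nonlocal term, shows that $u*\rho_\epsilon$ is a smooth subsolution of the $\rho_\epsilon$-convolved full equation, and the same truncation estimate and comparison against $u_r$ give the same bound. Optimising $\epsilon=r^{1-\sigma/2}$ balances the two error terms and delivers $\|u-u_r\|_0\le C r^{1-\sigma/2}$. The main technical obstacle is this last direction: since $u$ is only Lipschitz and $\I^\A[u]$ is not pointwise defined for $\sigma\ge 1$, one must justify that $u*\rho_\epsilon$ inherits a classical subsolution property. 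The cleanest route is to first further truncate $\nu_\A$ (reducing to a bounded nonlocal operator where the corresponding solution is classical), run the convolution argument there, and pass to the limit using the strong comparison of Proposition~\ref{thm:viscosity_exist}(a); once this is set up, the algebra above completes the proof.
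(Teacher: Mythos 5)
Your proposal is correct in substance but takes a genuinely different route from the paper. The paper's proof of the second bound is a two-line appeal to a continuous dependence estimate (the stationary version of Theorem 4.1 in \cite{JK05}, proved by doubling of variables), which directly yields $\|u-u_r\|_0\leq K\sup_\A\big(\int_{|z|<r}|z|^2\nu_\A(dz)\big)^{1/2}\leq C r^{1-\frac\sigma2}$; you instead run a Krylov-style mollification-and-comparison argument, which is exactly the machinery the paper deploys later (Section \ref{sec_prf_gen}) for the scheme error bounds, and your optimization $\epsilon+r^{2-\sigma}/\epsilon$ at $\epsilon=O(r^{1-\frac\sigma2})$ recovers the same rate. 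Two comparative remarks. First, your route is self-contained and constructive, but it exploits convexity of the HJB nonlinearity: both in convolving the pointwise inequality for $u_r$ and, in the reverse direction, in the fact that mollification of a viscosity subsolution remains a subsolution (a convex-combination-of-subsolutions fact the paper itself uses without comment in the proof of Theorem \ref{thm:er_bound:wdeg}); the continuous-dependence route needs no convexity and would survive for Isaacs-type equations. Your fallback for the reverse direction (truncate at $r'$, argue classically, let $r'\to0$) is also sound, but note the limit passage needs locally uniform convergence $u_{r'}\to u$ \emph{without rate} — available from half-relaxed limits plus Proposition \ref{thm:viscosity_exist}(a), not from the lemma itself — so there is no circularity, though you should say this explicitly. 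Second, you invoke \ref{A5} and \ref{A7} to cancel the compensator $\int_{r<|z|<1}\eta^\A\,\nu_\A(dz)$, and these are \emph{not} among the lemma's stated hypotheses; without them this term is an $x$-independent drift of size $O(r^{1-\sigma})$, which for $\sigma\geq 1$ would destroy the claimed rate in your argument (and, arguably, in any argument — the paper's hypothesis list here is slightly generous, and every application of the lemma in the paper does assume \ref{A5}, \ref{A7}). So your extra assumption is harmless in context, but flag the discrepancy with the statement as written.
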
 
\begin{proof}
The first part follows from Proposition \ref{thm:viscosity_exist} (c).
 By a continuous dependence result,
 $$\|u-u_r\|_0 \leq K \sup_{\A \in \mathcal{A}}\Big(\int_{|z|<r} |z|^2 \, \nu_{\A}(dz)\Big)^{\frac{1}{2}}$$
 for some $K>0$ independent of $r$. Since $\int_{|z|<r} |z|^2 \, \nu_{\A}(dz)= C \, r^{2-\sigma}$ by \ref{A6}, the second part follows. The continuous dependence result is the stationary version of Theorem 4.1 in \cite{JK05} and can be proved in a similar way. We omit the proof here.
\end{proof}

We introduce a truncated fractional Laplacian,
\begin{align*}
 \Delta^{\sigma, r}[\phi](x) = \int_{|z|>r} \big(\phi(x + z) - \phi(x)\big) \, \frac{dz}{|z|^{N+\sigma}}.
\end{align*}

\begin{thm} \label{thm:reg:wdeg:nonlocal}
Assume \ref{A1}-\ref{A7}, \ref{C1}-\ref{C3}, and $u_{r}$ is the unique viscosity solution of \eqref{eqn:aux:wdeg:nonlocal}. Then for any $r>0$ there is a $K>0$ independent of $r$ such that 
\begin{align}\label{regbnd:aux:frac:nonocal}
\|\, \Delta^{\sigma, r} [u_{r}]\, \|_0 \leq \frac{K}{c_{\A_0}}. 
\end{align}

\end{thm}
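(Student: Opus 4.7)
The plan is to bound $\|\Delta^{\sigma,r}[u_r]\|_0$ uniformly in $r$ in three steps: (i) establish a \emph{one-sided} fractional semi-concavity $u_r(x+h)+u_r(x-h)-2u_r(x)\le C|h|^{1+\beta}$ with $C$ independent of $r$; (ii) use (i) directly to bound $\Delta^{\sigma,r}[u_r]$ from above; and (iii) use (i) together with the HJB equation specialized at the non-degenerate control $\alpha_0$ to bound $\Delta^{\sigma,r}[u_r]$ from below. Throughout I use the uniform-in-$r$ bounds $\|u_r\|_{0,1}\le C$ from Lemma \ref{lemma:auxeqn:wdeg:nonlocal}, and exploit the key inequality $1+\beta>\sigma$ from \ref{C3}, which ensures $\int_{|z|<1}|z|^{1+\beta}\,\nu_{\alpha_0}(dz)<\infty$ via \ref{A6}.

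\textbf{Step (i).} Since $\I^{\alpha,r}$ is bounded on $C_b(\rn)$ by \ref{A4}, $u_r$ solves \eqref{eqn:aux:wdeg:nonlocal} pointwise. For each $x$, pick an $\varepsilon$-optimal $\alpha^*_\varepsilon(x)\in\mathcal A$ in the HJB sup. Writing the equation at $x$ with control $\alpha^*_\varepsilon(x)$ and the inequality at $x\pm h$ using the same test control, the translation-invariance of $\I^{\alpha,r}$ and \ref{C3} give
\begin{equation*}
\lambda\Psi_h(x)-\I^{\alpha^*_\varepsilon(x),r}[\Psi_h](x)\ \le\ 2K|h|^{1+\beta}+O(\varepsilon),\qquad\Psi_h(x):=u_r(x+h)+u_r(x-h)-2u_r(x).
\end{equation*}
A nonlocal maximum-principle argument at approximate maximizers $x_n$ of $\Psi_h$ (where $\I^{\alpha^*_\varepsilon(x_n),r}[\Psi_h](x_n)\to0$ by boundedness of $\Psi_h$ and $\nu_{\alpha^*}(|z|>r)<\infty$) then gives, on letting $\varepsilon\downarrow0$,
\begin{equation*}
\Psi_h(x)\ \le\ \tfrac{2K}{\lambda}|h|^{1+\beta}\qquad\text{for all }x,h\in\rn.
\end{equation*}

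\textbf{Steps (ii) and (iii).} By symmetry of $|z|^{-N-\sigma}dz$,
\begin{equation*}
\Delta^{\sigma,r}[u_r](x)=\tfrac12\int_{|z|>r}\Psi_z(x)\,\tfrac{dz}{|z|^{N+\sigma}}\ \le\ \tfrac12\int_{|z|>r}\Psi_z(x)^+\,\tfrac{dz}{|z|^{N+\sigma}},
\end{equation*}
and Step (i) controls the $r<|z|<1$ part (because $\beta>\sigma-1$) while $\Psi_z^+\le 4\|u_r\|_0$ controls the $|z|\ge1$ part (because $\sigma>0$), yielding $\Delta^{\sigma,r}[u_r]\le C$ uniformly in $r$. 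For the lower bound, symmetrize the $|z|<1$ part of $\I^{\alpha_0,r}[u_r]$ using \ref{A5} and \ref{A7} (noting $\eta^{\alpha_0}(0)=0$ from \ref{A7}), decompose $\nu_{\alpha_0}(dz)=c_{\alpha_0}|z|^{-N-\sigma}dz+\tilde\nu_{\alpha_0}(dz)$ on $|z|<1$ with $\tilde\nu_{\alpha_0}\ge0$ symmetric, and replace $\eta^{\alpha_0}(z)$ by $z$ using Lipschitz regularity of $u_r$ together with \ref{C1}(ii), to obtain
\begin{equation*}
\I^{\alpha_0,r}[u_r](x)\ =\ c_{\alpha_0}\Delta^{\sigma,r}[u_r](x)+\tfrac12\int_{r<|z|<1}\Psi_z(x)\,\tilde\nu_{\alpha_0}(dz)+R_r(x),
\end{equation*}
with $|R_r|\le C$ uniformly (tail terms via $\nu_{\alpha_0}(|z|\ge1)<\infty$ and boundedness of $u_r$; the $\eta^{\alpha_0}\neq z$ error via $|\eta^{\alpha_0}(z)-z|\le K|z|^2$ and \ref{A4}). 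By Step (i) and \ref{A6}, the middle integral is bounded above by $C\int_{|z|<1}|z|^{1+\beta}\,\nu_{\alpha_0}(dz)<\infty$. Finally, \eqref{eqn:aux:wdeg:nonlocal} evaluated at $\alpha_0$ yields $\I^{\alpha_0,r}[u_r]\ge\lambda u_r+f^{\alpha_0}\ge-C$, whence $c_{\alpha_0}\Delta^{\sigma,r}[u_r]\ge-C'$.

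The main obstacle is Step (i): the nonlocal maximum-principle argument must be made rigorous on all of $\rn$, where $\sup\Psi_h$ need not be attained, and it must use only the one-sided HJB inequality available at the selected $\alpha^*_\varepsilon(x)$ (since we do not have symmetric control from an $\inf$). A secondary technical point in Step (iii) is keeping all correction and tail terms uniformly bounded as $r\downarrow 0$ despite the $|z|^{-N-\sigma}$ singularity in \ref{A6}; this works exactly because of the gain $1+\beta>\sigma$ built into \ref{C3}.
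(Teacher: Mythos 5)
Your proof is correct, and it takes a genuinely different route from the paper's. The paper never establishes a pointwise semiconcavity estimate for $u_r$: instead it works with the kernel-integrated first differences $w_r=-\J^{r}[u_r]$, where $\J^{r}[\phi](x)=\int_{|z|>r}\big(\phi(x+\eta^{\A_0}(z))-\phi(x)\big)c_{\A_0}|z|^{-N-\sigma}dz$. Subtracting the equation at $x$ (with an $\varepsilon$-optimal control) from the trivial inequality at $x+\eta^{\A_0}(z)$ and integrating against the kernel, it shows that $w_r$ satisfies the auxiliary HJB-type supersolution inequality \eqref{comparison:auxeqn:nonlocal} (this step needs Fubini to commute $\J^r$ and $\I^{\A,r}$), and then invokes the comparison principle of Proposition \ref{thm:viscosity_exist}(a) against constant subsolutions to get one side; the other side repeats the comparison argument with the nonnegative excess density $\frac{d\nu_{\A_0}}{dz}-c_{\A_0}|z|^{-N-\sigma}$ from \ref{C1}$(i)$ on $r<|z|<1$, combined with the equation itself and tail bounds, before converting $\J^r$ to $\Delta^{\sigma,r}$ via \ref{C1}$(ii)$ and the uniform Lipschitz bound, exactly as in your Step (iii). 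You replace the two comparison arguments by a single hand-made maximum principle on the second differences $\Psi_h$, and this works: at approximate maximizers $x_n$ you use the $\varepsilon$-optimal control selected at $x_n$ itself together with the trivial inequality at $x_n\pm h$, so only the one-sided control availability you worried about is needed, and since $\sup_{\A}\nu_\A(\{|z|>r\})<\infty$ for each fixed $r$ by \ref{A4} and \ref{A6}, one gets $\limsup_n \I^{\A^*(x_n),r}[\Psi_h](x_n)\le 0$ (note it is this one-sided limit, not convergence to zero as you wrote, that holds and is needed). The underlying ingredients are the same in both proofs — freezing near-optimal controls, the $C^{1,\beta}$ bound on $f^\A$ from \ref{C3} (which enters the paper implicitly when it asserts $\sup_\A\|\J^r[f^\A]\|_0<\infty$; for $\sigma\ge1$ that assertion itself requires a symmetrization via \ref{A5} and \ref{A7}), positivity of the excess measure, and the equation at $\A_0$ — but your intermediate object is the pointwise bound $\Psi_h\le \frac{2K}{\lambda}|h|^{1+\beta}$ rather than a uniform bound on an integrated quantity. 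What your route buys is a stronger and independently interesting intermediate statement (semiconcavity of $u_r$ uniform in $r$, which incidentally matches the paper's own description of its method in the introduction) and it avoids both the Fubini commutation and the need to check that the auxiliary inequality fits the comparison framework; what the paper's integrated formulation buys is a template that transfers directly to the discrete setting, where the same supersolution-plus-comparison argument is run verbatim on the scheme in Theorem \ref{thm:reg:approx:wdeg:nonlocal}, whereas a pointwise semiconcavity estimate for $u_h$ would be messier to extract there.
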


\begin{proof}
Let us define the bounded auxiliary operator 
\begin{align*} 
    \J^{r}[\phi](x)= \int_{|z|>r} \big(\phi(x + \eta^{\A_0}(z)) - \phi(x)\big) \, \frac{c_{ \A_0}dz}{|z|^{N+\sigma}}.
\end{align*}
1)  \textit{A uniform bound on $w_r:=-\J^{r}[u_r]$.} \ Fix $x \in \rn$. By \eqref{eqn:aux:wdeg:nonlocal} and properties of suprema,  for any $\epsilon>0$ there exists $\bar{\A} \in \mathcal{A}$ such that 
\begin{align}\label{esti1:thm:reg}
\lambda \, u_{r}(x) + f^{\bar{\A}}(x) -  \, \mathcal{I}^{\bar{\A},r} u_{r}(x) \,  \geq - \epsilon, 
\end{align}
and (trivially) for any $y \in \rn$,
\begin{align} \label{esti2:thm:reg}
\lambda \, u_{r}(x+y) + f^{\bar{\A}}(x+y) - \, \mathcal{I}^{\bar{\A},r} u_{r}(x+y) \,  \leq 0. 
\end{align}
Take $y= \eta^{\A_0}(z)$, subtract equations  \eqref{esti1:thm:reg}  and \eqref{esti2:thm:reg}, multiply by $\frac{c_{\A_0}}{|z|^{N+\sigma}}$, and integrate over $|z|>r$. The result is then  
\begin{align*}
&\lambda \,\big(-\J^{r} [u_{r}](x)\big) - \J^{r} [f^{\bar{\A}}](x) -\J^{r}\big[-\I^{\bar{\A}, r}[u_{r}]\big](x) \geq -\epsilon. 
\end{align*}
This inequality holds for $\bA$ and then also holds for the supremum over all $\A\in\mathcal A$. Since $\epsilon>0$ and $x\in\mathbb R^N$ are arbitrary, $\J^{r}$ and $\I^{\A, r}$ are linear operators, and by Fubini $\J^{r}\big[\I^{\bA, r}[u_{r}]\big] =  \I^{\bA, r}\big[\J^{r}[u_{r}]\big]$, by the definition of $w_r$ we have
\begin{align}\label{comparison:auxeqn:nonlocal}
\lambda w_{r}(x) +  \sup_{\A\in \mathcal{A}}\big\{-\J^{r} [f^{\A}](x) -  \I^{\A, r}[w_{r}](x) \big\} \geq 0 \qquad \text{in}\qquad \mathbb R^N.
\end{align}
By assumption \ref{C3}, $C:=\sup_{\A\in \mathcal{A}}\|\J^{r} [f^{\A}]\|_0 <\infty$, so $-\frac{C}{\lambda}$ is a subsolution  of \eqref{comparison:auxeqn:nonlocal}.\footnote{Replace $\geq$ by $=$ in \eqref{comparison:auxeqn:nonlocal}.}  Then by comparison, 
Proposition \ref{thm:viscosity_exist}~(a),\footnote{Equation \eqref{comparison:auxeqn:nonlocal} (replace $\geq$ by $=$) is of same form as in \eqref{eqn:main}.} 
 \begin{align} \label{lowerbd:eqn:nonlocal}
 -\J^{r}[u_r]=w_{r} \geq - \frac{C}{\lambda}\qquad \text{in}\qquad \rn. 
 \end{align}
 
To get a lower bound on $\J^{r}[u_{r}]$, we use the upper bound and weak degeneracy:  $\tilde{\nu}_{\A_0}(z) - \frac{c_{\A_0}}{|z|^{N+\sigma}} \geq 0$ for $|z|<1$. Let $y = \eta^{\A_0}(z)$, subtract  \eqref{esti2:thm:reg} and \eqref{esti1:thm:reg}, multiply by $(\tilde{\nu}_{\A_{0}}(z) - \frac{c_{ \A_0}}{|z|^{N+\sigma}})$, and integrate over $r<|z|<1$. The result is 
\begin{align*}
& \lambda \,\big(- (\I^{\A_0,r}_1 - \J^{r}_1 \, ) [u_{r}](x)\big) - (\I^{\A_0,r}_1 - \J^{r}_1 \, ) [f^{\bar{\A}}](x)  \\
& \hspace*{4cm} -   \I^{\bar{\A}, r}\Big[-(\I^{\A_0,r}_1 - \J^{r}_1 \, ) [u_{r}]\Big](x) \geq -\epsilon . 
\end{align*}
where $\mathcal{J}^{r}_1[\phi](x) = \int_{r<|z|<1} \big(\phi(x + \eta^{\A_0}(z)) - \phi(x)\big) \, \frac{c_{\A_0}dz}{|z|^{N+\sigma}}$.
Then arguing as for the upper bound we have 
\begin{align}\label{prf_err_M3}
-(\I^{\A_0,r}_1 - \J^{r}_1 \, ) [u_{r}]\geq -\frac{C}{\lambda}\qquad \text{in}\qquad \rn.
\end{align}
The above estimate implies $-\J^{r}_1 [u_{r}](x) \leq \frac{C}{\lambda} + \sup_{\A \in \mathcal{A}}\big\{- \I^{\A_0,r}_1 [u_{r}](x)\big\},$ and therefore since $u_r$ solves \eqref{eqn:aux:wdeg:nonlocal}, that
\begin{align}
&- \J^{r}_1 [u_{r}](x) \nonumber \\
&\leq  \frac{C}{\lambda} + \sup_{\A \in \mathcal{A}} \left\{ - \I^{\A,r}[u_r] + f^{\A}(x) \right\}  + \lambda u_r(x) \nonumber \\
&\quad+ \sup_{\A \in \mathcal{A}}\|f^{\A}\|_0 + \lambda \|u_r\|_0  + \sup_{\A \in \mathcal{A}}\Big|\int_{|z|>1} \big(u_r(x+ \eta^{\A}(z)) -u_r(x) \big) \nu_{\A}(dz) \Big|\nonumber \\
&\leq  \frac{C}{\lambda} + 0 + \sup_{\A \in \mathcal{A}}\|f^{\A}\|_0 + \Big(\lambda + 2 \sup_{\A\in \mathcal{A}} \int_{|z|>1} \nu_{\A}(dz) \Big) \|u_r\|_0  .\label{prf_err_M4}
\end{align}
Let $\J^{r}=\J_1^{r}+\J^{1,r}$ where $\J^{1,r}=\int_{|z|>1}(\cdots)\frac{c_{\A_0}dz}{|z|^{N+\sigma}}$. By \ref{A2}, \ref{A4}, and Lemma \ref{lemma:auxeqn:wdeg:nonlocal}, both the right hand side of \eqref{prf_err_M4} and $\J^{1,r}[u_r]$ are bounded, and hence
\begin{align}\label{uperbd:eqn:nonlocal}
- \J^{r}[u_r] \leq C\qquad \text{in}\qquad \rn,
\end{align}
for some constant $C>0$ independent of $r$. By \eqref{lowerbd:eqn:nonlocal} and \eqref{uperbd:eqn:nonlocal} we conclude that $|w_r|=|\J^{r}[u_r]|\leq C_1$ for some other $C_1>0$  independent of $r$.\medskip
 
\noindent 2) \textit{The bound on $\Delta^{\sigma, r} [u_{r}]$.} \ Since $c_{ \A_0} >0$ by \ref{C1}, from step 1) it follows that 
\begin{align*}
I:=\Big|\int_{|z|>r} \big(u_r(x+\eta^{\A_0}(z)) - u_r(x)\big) \, \frac{dz}{|z|^{N+\sigma}}\Big| \leq \frac{C_1}{c_{ \A_0}}. 
\end{align*}
From this estimate, the bound $\|u_r\|_{0,1}\leq K$, and \ref{C1}$(ii)$ and \ref{A3} (implying $\eta^\alpha(0)=0$), we see that
\begin{align*}
|\Delta^{\sigma, r}[u_r](x)| &  \leq I+  \int_{|z|>r} \big|u_r(x+\eta^{\A_0}(z)) - u_r(x+z)\big| \, \frac{dz}{|z|^{N+\sigma}} \\
& \leq  \frac{C_1}{c_{ \A_0}} + \|Du_r\|_0 \int_{r<|z|<1} |z|^2 \, \frac{dz}{|z|^{N+\sigma}}  + 2\|u_r\|_0 \int_{|z|>1} \frac{dz}{|z|^{N+\sigma}}. 
\end{align*}
 The right hand side is uniformly bounded so the proof is complete.
\end{proof}

Sending $r\to0$ in the above result, we get a key result for this paper.
\begin{cor}\label{cor:fracbd:soln:nonlocal}
Assume \ref{A1}-\ref{A7}, \ref{C1}-\ref{C3}, and  $u$ it the unique viscosity solution of \eqref{eqn:main:wdeg:nonlocal}. Then $(-\Delta)^{\frac{\sigma}{2}}[u] \in L^{\infty}(\rn)$.
\end{cor}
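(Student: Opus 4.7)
The strategy is to pass to the limit $r\to0$ in the uniform bound of Theorem~\ref{thm:reg:wdeg:nonlocal} and to identify the weak-$*$ limit with $(-\Delta)^{\sigma/2}u$ in the sense of tempered distributions. Let $g_r:=\Delta^{\sigma,r}[u_r]$. Theorem~\ref{thm:reg:wdeg:nonlocal} provides the uniform bound $\|g_r\|_{L^\infty(\rn)}\leq K/c_{\A_0}$, while Lemma~\ref{lemma:auxeqn:wdeg:nonlocal} gives $u_r\to u$ uniformly as $r\to 0$. Banach--Alaoglu in $L^\infty(\rn)=(L^1(\rn))^*$ then yields a subsequence $r_j\to 0$ along which $g_{r_j}\overset{*}{\rightharpoonup} g$ in $L^\infty(\rn)$, with $\|g\|_{L^\infty(\rn)}\leq K/c_{\A_0}$.

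For any test function $\varphi\in C_c^\infty(\rn)$, the symmetry of the kernel $K_r(z):=|z|^{-N-\sigma}\mathbf{1}_{|z|>r}$ together with Fubini (applicable since $\int_{\rn} K_r\,dz<\infty$ for each $r>0$ and $u_r$, $\varphi$ are bounded with $\varphi$ compactly supported) gives the self-adjoint identity
\begin{align*}
\int_{\rn} g_r(x)\,\varphi(x)\,dx \;=\; \int_{\rn} u_r(y)\,\Delta^{\sigma,r}[\varphi](y)\,dy.
\end{align*}
I would then send $r_j\to 0$: the left-hand side tends to $\int g\varphi\,dx$ by weak-$*$ convergence, while dominated convergence handles the right-hand side, provided one has (i) the pointwise limit $\Delta^{\sigma,r}[\varphi](y)\to -c_{N,\sigma}^{-1}(-\Delta)^{\sigma/2}\varphi(y)$ (which follows by rewriting as a symmetric second difference and using $\varphi\in C^\infty$) and (ii) an $r$-independent $L^1(\rn)$-majorant for $y\mapsto\Delta^{\sigma,r}[\varphi](y)$. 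Together with the uniform convergence $u_r\to u$, this produces
\begin{align*}
\int_{\rn} g\,\varphi\,dx \;=\; -\frac{1}{c_{N,\sigma}}\int_{\rn} u\,(-\Delta)^{\sigma/2}\varphi\,dx \qquad\text{for all }\varphi\in C_c^\infty(\rn).
\end{align*}
Since $(-\Delta)^{\sigma/2}$ is self-adjoint on $\mathscr{S}(\rn)$ and $u\in L^\infty(\rn)\subset\mathscr{S}'(\rn)$, this identifies $(-\Delta)^{\sigma/2}u=-c_{N,\sigma}\,g\in L^\infty(\rn)$ distributionally, proving the corollary.

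The main obstacle is producing the uniform $L^1$-majorant for $y\mapsto\Delta^{\sigma,r}[\varphi](y)$. For $y$ in a bounded neighborhood of $\supp\varphi$, I would exploit the kernel symmetry to rewrite the integrand as $\tfrac12(\varphi(y+z)+\varphi(y-z)-2\varphi(y))$, bounded by $\|D^2\varphi\|_0|z|^2$ on $|z|<1$ and by $4\|\varphi\|_0$ on $|z|\geq 1$, which renders $|z|^{-N-\sigma}\mathbf{1}_{|z|>r}$ integrable in $z$ uniformly in $r\in(0,1)$ as long as $\sigma<2$. For $y$ far from $\supp\varphi$, only the shifted term $\varphi(y+z)$ contributes, the integration in $z$ is confined to $\supp\varphi-y$, and the resulting bound decays like $|y|^{-N-\sigma}$, which is integrable in $y$ outside a large ball.
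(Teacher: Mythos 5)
Your proof is correct and takes essentially the same route as the paper: both pass to the limit $r\to 0$ using the uniform bound of Theorem \ref{thm:reg:wdeg:nonlocal}, the convergence $u_r\to u$ from Lemma \ref{lemma:auxeqn:wdeg:nonlocal}, the self-adjointness of $\Delta^{\sigma,r}$, and weak-$*$ compactness in $L^{\infty}(\rn)$, then identify the weak-$*$ limit with $(-\Delta)^{\frac{\sigma}{2}}[u]$ by testing against $C_c^{\infty}$ functions. The only (immaterial) variation is in the limit passage on the test-function side, where you use pointwise convergence plus dominated convergence with an explicit $r$-uniform $L^1$ majorant, while the paper obtains the same $L^1$ convergence quantitatively from the one-line Taylor bound $\int_{\rn}\big|\int_{|z|<r}\big(\varphi(x+z)-\varphi(x)-z\cdot\grad\varphi(x)\big)\frac{dz}{|z|^{N+\sigma}}\big|\,dx\leq C\|D^2\varphi\|_{L^1}\,r^{2-\sigma}$.
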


\begin{proof}
Note that since $u$ is bounded, $(-\Delta)^{\frac{\sigma}{2}}[u] $ defines a distribution by 
$$( (-\Delta)^{\frac{\sigma}{2}}[u], \phi ) = \int_{\rn} u(x) \, (-\Delta)^{\frac{\sigma}{2}}[\phi](x)  \, dx \quad \mbox{for any} \quad \phi \in C^{\infty}_c(\rn).  $$
To complete the proof we must show that this distribution can be represented by a function in $L^\infty(\rn)$. Let $u_r$ be the bounded solution of \eqref{eqn:aux:wdeg:nonlocal}, and note that
\begin{align}\label{cor:esti1}
& \Big|\int_{\rn} u(x) \, (-\Delta)^{\frac{\sigma}{2}}[\phi](x)  \, dx - \int_{\rn} u_r(x) (-\Delta^{\sigma,r}[\phi](x))\, dx \Big| \notag \\
 &\leq  \Big| \int_{\rn}(u-u_r)(x) (-\Delta)^{\frac{\sigma}{2}}[\phi](x) \, dx \Big| + \|u_r\|_0 I,
\end{align} 
where $(-\Delta)^{\frac{\sigma}{2}}[\phi] \in L^{1}(\rn)$\footnote{A Taylor expansion shows that $\|(-\Delta)^{\frac{\sigma}{2}}[\phi]\|_{L^1}\leq c \|\phi\|_{W^{2,1}}$, and $\|\phi\|_{W^{2,1}}<\infty$ for $\phi\in C^\infty_c$.} and by Taylor,
\begin{align*}
   I= &\ \int_{\rn}\Big| \big(-\Delta^{\sigma,r}[\phi] - (-\Delta)^{\frac{\sigma}{2}}[\phi] \big)(x) \Big|\, dx  \\
= &\ \int_{\rn} \Big| \int_{|z|<r} \big(\phi(x+z)-\phi(x) -z\cdot \grad\phi(x)\big)\frac{dz}{|z|^{N+\sigma}}\Big| dx \\
 \leq &\  \|D^2 \phi\|_{L^{1}(\rn)}  \int_{|z|<r} |z|^2 \frac{dz}{|z|^{N+\sigma}} \, \leq C \|D^2 \phi\|_{L^{1}(\rn)} r^{2-\sigma}.
\end{align*}
 By Lemma \ref{lemma:auxeqn:wdeg:nonlocal},  $\|u_r\|_0 $ is bounded independently of $r$ and $u_r\to u$ in $L^\infty$, hence since $\Delta^{\sigma,r}$ is self-adjoint, it follows from \eqref{cor:esti1} that
\begin{align}\label{cor:weak:eq}
\int_{\rn} u(x) \, (-\Delta)^{\frac{\sigma}{2}}[\phi](x) \, dx &= \lim_{r\rightarrow 0} \int_{\rn} u_r(x) (-\Delta^{\sigma,r}[\phi])(x)\, dx  \notag \\
& = \lim_{r\rightarrow 0} \int_{\rn} (-\Delta^{\sigma,r}[u_r])(x) \, \phi(x)\, dx.
\end{align}
By Theorem \ref{thm:reg:wdeg:nonlocal}, $\|\Delta^{\sigma,r}[u_r] \|_0 \leq K$ for some $K>0$ independent of $r$. By weak star compactness (Alaoglou/Helly) there is an $f \in L^{\infty}(\rn)$ 
and a subsequence $\{r_n\}_n$  such that $r_n\to0$
and $(-\Delta^{\sigma,r_n}[u_{r_n}]) \stackrel{*}{\rightharpoonup} f$ in $L^\infty$.
Passing to the limit in \eqref{cor:weak:eq}, 
\begin{align*}
\int_{\rn} u(x) \, (-\Delta)^{\frac{\sigma}{2}}[\phi](x) \, dx  = \lim_{n\rightarrow \infty} \int_{\rn} (-\Delta^{\sigma,r_n}[u_{r_n}])(x) \, \phi(x)\, dx = \int_{\rn} f(x) \, \phi(x) \, dx. 
\end{align*}
The proof is complete. 
\end{proof}  
We immediately observe an improvement of regularity for the viscosity solution of \eqref{eqn:main:wdeg:nonlocal} in the case that $\sigma>1$ (compare with Proposition \ref{thm:viscosity_exist}). 

\begin{thm}\label{thm:regularity:vis_soln:wdeg:nonlocal}
Assume $\sigma>1$, \ref{A1}-\ref{A7}, \ref{C1}-\ref{C3}, and $u$ is the unique viscosity solution of \eqref{eqn:main:wdeg:nonlocal}. Then $u \in C^{1, \sigma -1}(\rn)$  and 
$$\|u\|_{1, \sigma -1} \leq K \big(\|u\|_0 + \|\, (-\Delta)^{\frac{\sigma}{2}} [u]\, \|_0\big).$$  
\end{thm}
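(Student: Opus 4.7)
The plan is to reduce the fully nonlinear HJB equation to a \emph{linear} fractional Poisson problem for the standard fractional Laplacian and then apply off-the-shelf Schauder estimates from the reference \cite{Oton2016}. By Proposition \ref{thm:viscosity_exist}, the viscosity solution $u$ is bounded and Lipschitz on $\rn$, so $u\in L^\infty(\rn)$ defines a tempered distribution and $(-\Delta)^{\sigma/2}u$ is well-defined as a distribution. Corollary \ref{cor:fracbd:soln:nonlocal}, which was the crucial preparatory step, upgrades this distribution to an honest $L^\infty$ function: there exists $g\in L^\infty(\rn)$ with
\[
(-\Delta)^{\sigma/2}u = g \quad \text{in }\mathcal{D}'(\rn), \qquad \|g\|_0 = \|(-\Delta)^{\sigma/2}u\|_0.
\]

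The second step is to view this identity as a linear equation in $u$ posed on all of $\rn$, forgetting the nonlinear HJB structure entirely. For $\sigma\in(1,2)$, the global Schauder-type regularity result for the fractional Laplacian (of the form proved in \cite{Oton2016}) states that any bounded distributional solution $v$ of $(-\Delta)^{\sigma/2}v = h$ on $\rn$ with $h\in L^\infty(\rn)$ belongs to $C^{1,\sigma-1}(\rn)$, with the estimate
\[
\|v\|_{1,\sigma-1}\leq K\bigl(\|v\|_0+\|h\|_0\bigr).
\]
Applying this with $v=u$ and $h=g$ yields the stated bound. The main thing to check is that the hypotheses of the cited theorem match our setting (bounded datum, solution in $L^\infty$, distributional/viscosity formulation on the whole of $\rn$), and that $\sigma-1$ is a non-integer H\"older exponent in $(0,1)$, which is automatic from $\sigma\in(1,2)$.

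The expected main obstacle was not in this final step at all but in arranging the identification $(-\Delta)^{\sigma/2}u\in L^\infty$ from a nonlinear HJB equation whose nonlocal operators $\I^\A$ are not fractional Laplacians; this has already been accomplished in the preceding Theorem \ref{thm:reg:wdeg:nonlocal} and Corollary \ref{cor:fracbd:soln:nonlocal} by exploiting the weak non-degeneracy \ref{C1}, the semiconcavity \ref{C3}, and a duality/weak-$*$ compactness argument on the truncated problem \eqref{eqn:aux:wdeg:nonlocal}. With that ingredient available, the passage from $(-\Delta)^{\sigma/2}u\in L^\infty$ to $u\in C^{1,\sigma-1}$ is a direct invocation of linear theory, and one simply has to keep track of constants to produce the quantitative bound
$\|u\|_{1,\sigma-1}\leq K(\|u\|_0+\|(-\Delta)^{\sigma/2}u\|_0)$ claimed in the statement.
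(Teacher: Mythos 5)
Your proposal is correct and takes essentially the same route as the paper: its proof consists precisely of invoking Corollary \ref{cor:fracbd:soln:nonlocal} to get $(-\Delta)^{\frac{\sigma}{2}}[u]\in L^{\infty}(\rn)$, noting $u\in L^{\infty}(\rn)$, and then citing Theorem 1.1(a) of \cite{Oton2016} for the $C^{1,\sigma-1}$ bound. Your identification of where the real work lies (Theorem \ref{thm:reg:wdeg:nonlocal} and the weak-$*$ limit in Corollary \ref{cor:fracbd:soln:nonlocal}) matches the paper's structure exactly.
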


\begin{proof}
By the Corollary \ref{cor:fracbd:soln:nonlocal}, $(-\Delta)^{\frac{\sigma}{2}} [u] \in L^{\infty}(\rn)$, and from the definition of viscosity solution $u \in L^{\infty}(\rn)$. Therefore the result follows from Theorem 1.1(a) of the article \cite{Oton2016} by Ros-Oton and Serra. 
\end{proof}

\begin{rem}

\noindent  When $\sigma< 1$ we get no improvement in regularity from Lipschitz (Proposition \ref{thm:viscosity_exist}(c)). But here Lipschitz regularity is sufficient for solutions to be point-wise classical solutions of  \eqref{eqn:main:wdeg:nonlocal}.
\end{rem}

\section{Diffusion corrected difference-quadrature scheme} \label{monotone scheme}

In this section we construct  monotone discretizations for equation \eqref{eqn:main} (and \eqref{eqn:main:wdeg:nonlocal}),
and give precise results on their convergence rates. There are two main steps to construct the schemes: (i) approximate the singular part of the nonlocal operator by a local diffusion, and (ii) discretize the resulting equations using semi-Lagrangian type of difference quadrature schemes. 

By symmetry \ref{A5} and \ref{A7}, $\left(\int_{\delta<|z|<1} \eta^{\A}(z) \, \nu_{\A}(dz) \right)\cdot \nabla \phi(x)=0$. For $\delta\in(0,1)$, we then write the nonlocal operator $\I^{\A}$ as
\begin{align}\label{eqn:neq_form1}
\I^{\A}[\phi](x) & = \left( \int_{|z|<\delta} + \int_{|z|>\delta}\right) \Big(\phi(x+\eta^{\A}(z)) - \phi(t, x) -\eta^{\A}(z)\cdot \nabla \phi(x) \Big)\, \nu_{\A}(dz) \notag \\
& = \int_{|z|<\delta}\Big(\phi(x+\eta^{\A}(z)) - \phi(t, x) -\eta^{\A}(z)\cdot \nabla \phi(x) \Big)\, \nu_{\A}(dz) \notag \\
& \hspace*{4cm} +\int_{|z|>\delta} \Big(\phi(x+\eta^{\A}(z)) - \phi(t, x) \Big)\, \nu_{\A}(dz)   \notag \\
& := \I^{\A}_{\delta} [\phi](x) + \I^{\A,\delta}[\phi](x) . 
\end{align}
The $\delta$ will be chosen later. We say that $\I^{\A}_\delta$ is the singular part\footnote{When $\nu$ has a singularity at the origin, this is a singular integral operator. If the singularity is strong enough, the operator will be a fractional differential operator of positive order.} of $\I^{\A}$, while $\I^{\A,\delta}$ is always a bounded operator.

\subsection{Approximation of the singular part of the nonlocal operator. }
 The simplest (but not very accurate) discretization of $\I^{\A}_\delta[\phi]$ is to replace it by $0$. Better approximations 
 can be obtained using local diffusion terms \cite{CT:Book,JKL08}. This corresponds to approximating the small jumps in the SDE \eqref{SDE} by an appropriate Brownian motion \cite{AR01}. We define 
 $$a^{\A}_{\delta} = \frac{1}{2}\int_{|z|<\delta} \eta^{\A}(z)\eta^{\A}(z)^T \, \nu_{\A}(dz)\qquad\text{and}\qquad \mathcal{L}_\delta^{\A}[\phi](x) :=tr[a^{\A}_{\delta} D^2 \phi],$$
  where $a^{\A}_{\delta}$ is a constant non-negative 
  matrix and $\phi\in C_b^2(\rn)$. We approximate equation \eqref{eqn:main} 
  by replacing $\I_{\delta}^{\A}[\phi]$ with $\mathcal{L}_\delta^{\A}[\phi](x)$:
\begin{align}\label{eqn:main:apprx}
\sup_{\A \in\mathcal{A}} \Big\{ f^{\A}(x)   + c^{\A}(x)u(x) - \mathcal{L}_\delta^{\A}[\phi](x) - \I^{\A,\delta}[u](x) \Big\} =0 \quad \mbox{in} \quad \rn. 
\end{align}
 
\begin{lem}\label{lem:err_local_dif_sym_odd}
Assume \ref{A1}-\ref{A7} and $\delta\in(0,1)$.  Then there are $C,K>0$ independent of $\delta,\alpha,\phi$ such that
\begin{align}\label{err_local_dif_sym}
&(i)\quad |\I^{\A}_{\delta}[\phi]- \mathcal{L}_\delta^{\A}[\phi]
| \leq C\delta^{4-\sigma} \|D^4\phi\|_0,
 \\[0.2cm]
 &(ii) \quad  |a^{\A}_{\delta}| \leq \int_{|z|\leq \delta}
  |\eta^{\A}(z)|^2 \, \nu_{\A}(dz) \leq K \delta^{2-\sigma} \label{a-bnd}.
\end{align} 
\end{lem}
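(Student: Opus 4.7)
The plan is a Taylor expansion argument exploiting the symmetry assumptions \ref{A5} and \ref{A7}, combined with the size estimates \ref{A3} on $\eta^\A$ and \ref{A6} on the L\'evy density.

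For part (i), I would Taylor expand $\phi(x+\eta^\A(z))$ about $x$ to fourth order:
\begin{align*}
\phi(x+\eta^\A(z)) - \phi(x) - \eta^\A(z)\cdot \nabla\phi(x)
&= \tfrac{1}{2}\eta^\A(z)^T D^2\phi(x)\eta^\A(z) \\
&\quad + \tfrac{1}{6} D^3\phi(x)[\eta^\A(z),\eta^\A(z),\eta^\A(z)] + R_4(x,z),
\end{align*}
with remainder $|R_4(x,z)| \leq C|\eta^\A(z)|^4\|D^4\phi\|_0$. Integrating the quadratic term against $\nu_\A(dz)$ over $|z|<\delta$ produces exactly $\mathcal L_\delta^\A[\phi](x)$ by the definition of $a_\delta^\A$. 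The crux is then that the cubic term integrates to zero: since $\eta^\A(-z)=-\eta^\A(z)$ on $|z|<1$ by \ref{A7}, the integrand $D^3\phi(x)[\eta^\A(z)]^{\otimes 3}$ is an odd function of $z$, and because $\delta<1$ the measure $\nu_\A(dz)$ is symmetric on $|z|<\delta$ by \ref{A5}, so the integral vanishes. Only the remainder survives, and using $|\eta^\A(z)|\leq K|z|$ from \ref{A3} and $d\nu_\A/dz\leq C|z|^{-N-\sigma}$ from \ref{A6},
\begin{align*}
\Big|\int_{|z|<\delta} R_4(x,z)\,\nu_\A(dz)\Big|
\leq C\|D^4\phi\|_0 \int_{|z|<\delta} |z|^4\,\frac{dz}{|z|^{N+\sigma}}
\leq C'\|D^4\phi\|_0\,\delta^{4-\sigma},
\end{align*}
which is exactly the required estimate (the integral converges since $\sigma<2<4$).

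For part (ii), the first inequality is immediate from the definition $a_\delta^\A=\tfrac12\int_{|z|<\delta}\eta^\A(z)\eta^\A(z)^T\,\nu_\A(dz)$ by taking operator (or any reasonable matrix) norm inside the integral and using $|\eta^\A(z)\eta^\A(z)^T|\leq |\eta^\A(z)|^2$. The second inequality then follows exactly as above: \ref{A3} gives $|\eta^\A(z)|^2\leq K^2|z|^2$, and \ref{A6} reduces the estimate to
\begin{align*}
\int_{|z|<\delta} |z|^2\,\frac{dz}{|z|^{N+\sigma}} \leq C\,\delta^{2-\sigma},
\end{align*}
which converges since $\sigma<2$.

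The main obstacle is really just the symmetry cancellation of the cubic term, which is essential to get the rate $\delta^{4-\sigma}$ rather than the much weaker $\delta^{3-\sigma}$ one would obtain by a naive Taylor expansion to third order. Everything else is a routine combination of pointwise Taylor bounds with the polar-coordinate estimate $\int_{|z|<\delta}|z|^{k-N-\sigma}dz\sim \delta^{k-\sigma}$ valid for $k>\sigma$.
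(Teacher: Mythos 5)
Your proposal is correct and takes essentially the same route as the paper's proof: a Taylor expansion in which the quadratic term reproduces $\mathcal{L}_\delta^{\A}[\phi]$ by the definition of $a^{\A}_{\delta}$, the cubic term vanishes by the symmetry assumptions \ref{A5} and \ref{A7}, and the fourth-order remainder is bounded by $C\delta^{4-\sigma}\|D^4\phi\|_0$ using \ref{A3} and \ref{A6}, with part (ii) handled exactly as you do. The only cosmetic remark is that the odd-term cancellation tacitly uses the absolute convergence $\int_{|z|<\delta}|z|^3\,\nu_{\A}(dz)<\infty$, which follows from \ref{A6} since $\sigma<2<3$; this is implicit in the paper's argument as well.
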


\begin{proof}
By Taylor's expansion theorem  and  smooth $\phi$,
\begin{align*}
& \int_{|z|<\delta}\Big(\phi(x+\eta^{\A}(z))+ \phi(x) - \eta^{\A}(z)\cdot \grad \phi \Big) \, \nu_{\A}(dz) \\
& =\int_{|z|<\delta} \Big( \eta^{\A}(z)\cdot D^2\phi(x)\cdot \eta^{\A}(z)^T + \sum_{|\beta|=3 } \frac{1}{\beta!} [\eta^{\A}(z)]^{\beta} D^{\beta}\phi(x) \Big) \, \nu_{\A}(dz) + Err_{\delta},
\end{align*}
where $Err_{\delta}= \frac{|\beta|}{\beta!}\sum_{|\beta|=4}\big[\int_{|z|<\delta} \int_0^1(1-s)^{|\beta-1|} [\eta^{\A}(z)]^{\beta} \, D^{\beta}\phi(x+sz) \, ds\, \nu_{\A}(dz)\big].$  
By the assumptions \ref{A5} and \ref{A7} and then by \ref{A6} we have
$$\sum_{|\beta|=3 } \int_{|z|\leq \delta} \, [\eta^{\A}(z)]^{\beta} D^{\beta}\phi(x) \,\nu_{\A}(dz) =0 \quad \text{and} \quad |Err_{\delta}|\leq C \delta^{4-\sigma} \|D^4\phi\|_0.$$ 
That proves part $(i)$. Part $(ii)$ follows by \ref{A3} and \ref{A4}. 
\end{proof}


\subsection{Consistent monotone discretization of the approximate equation}\label{subsec:discr} 
We now approximate the local and nonlocal part of equation \eqref{eqn:main:apprx} separately.\\

\noindent \textit{(i) Discretization of the local term:}  
Since $a^{\A}_{\delta}$ is symmetric and nonnegative ($\xi^T a^{\A}_{\delta}\xi = \int_{|z|<\delta} (\eta^{\A}(z)\cdot \xi)^2 \, \nu_{\A}(dz) \geq 0 $), it has a square root with columns $(\sqrt{a^{\A}_{\delta}})_{i}$. We then introduce
the semi Lagrangian (SL)  approximation (inspired by \cite{Camilli-Falcone, Deb_Jak:13})
\begin{align}\label{SL_approx_term}
\mathcal{L}_\delta^{\A}[\phi]&=tr[a^{\A}_{\delta} D^2 \phi] \nonumber\\
& \approx \sum_{i=1}^N \frac{\phi(x + k (\sqrt{a^{\A}_{\delta}})_{i}) + \phi(x - k (\sqrt{a^{\A}_{\delta}})_{i}) - 2\phi(x)}{2\,k^2} \equiv \mathcal{D}^{\A}_{\delta,k}[\phi](x). 
\end{align} 
This approximation is monotone by construction, and by Taylor expansions, 
\begin{align}\label{SL-bd}
|\mathcal{L}_\delta^{\A}[\phi] - \mathcal{D}^{\A}_{\delta,k}[\phi] | \leq K |a^{\A}_{\delta}|^2 k^2 \|D^4 \phi\|_0 \leq K\delta^{2(2-\sigma)} k^2 \|D^4 \phi\|_0.
\end{align}
Since $x_{\bj} \pm k (\sqrt{a^{\A}_{\delta}})_{i}$
may not be on the grid, we interpolate to get a full discretization. To preserve monotonicity, we use linear/multilinear interpolation $i_h(\phi)(x)=\sum_{\bj \in \zn} \phi(x_j) \omega_{\bj}(x)$ where the basis functions $\omega_{\bj}\geq 0$ and $\sum_{\bj\in \zn} \omega_{\bj}=1$. Let 
\begin{align}\label{local_epprox_term}
\mathcal{L}^{\A}_{\delta,k,h}[\phi](x) = \sum_{i=1}^N \frac{i_h\big[\phi(x + k (\sqrt{a^{\A}_{\delta}})_{i})\big] + i_h\big[\phi(x - k (\sqrt{a^{\A}_{\delta}})_{i})\big] - 2\phi(x)}{2 k^2}. 
\end{align}
By the property of multilinear interpolation, this approximation is monotone with 
\begin{align}\label{SL_bd_interpolant}
|\mathcal{L}^{\A}_{\delta,k,h}[\phi] - \mathcal{D}^{\A}_{\delta,k}[\phi]| \leq C \frac{h^2}{k^2} \|D^2 \phi\|_0 . 
\end{align}  
By \eqref{SL-bd} and \eqref{SL_bd_interpolant} we have a truncation error bound for the local approximate term. 

\begin{lem} \label{lem:local_trunc1}
Assume \ref{A3}-\ref{A7}. Then there is $K>0$ independent of $h,\delta,\A,\phi$ such that \begin{align}\label{local_trunc1}
 \big| \mathcal{L}^{\A}_{\delta,k,h} [\phi](x) - \mathcal{L}_\delta^{\A} [\phi](x)  \big| \leq K \Big( \delta^{2(2-\sigma)} k^2 \|D^4\phi\|_0 + \frac{h^2}{k^2} \|D^2 \phi\|_0\Big).  
\end{align} 
\end{lem}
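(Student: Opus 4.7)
The proof is a short triangle-inequality argument that combines the two intermediate error bounds \eqref{SL-bd} and \eqref{SL_bd_interpolant} already stated in the lines preceding the lemma. The plan is to insert the semi-discrete operator $\mathcal{D}^{\A}_{\delta,k}[\phi]$ as a pivot and split
\[
\big|\mathcal{L}^{\A}_{\delta,k,h}[\phi] - \mathcal{L}_\delta^{\A}[\phi]\big|
\leq \big|\mathcal{L}^{\A}_{\delta,k,h}[\phi] - \mathcal{D}^{\A}_{\delta,k}[\phi]\big| + \big|\mathcal{D}^{\A}_{\delta,k}[\phi] - \mathcal{L}_\delta^{\A}[\phi]\big|.
\]
Each term contributes one of the two summands in the target estimate.

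For the second term I would do a fourth-order Taylor expansion of $\phi(x\pm k(\sqrt{a^{\A}_{\delta}})_i)$ about $x$. Odd-order contributions vanish by the $\pm$ symmetry of the stencil; the quadratic terms assemble into
\[
\sum_{i=1}^N (\sqrt{a^{\A}_{\delta}})_i^{\,T} D^2\phi(x)(\sqrt{a^{\A}_{\delta}})_i = \mathrm{tr}(a^{\A}_{\delta} D^2\phi(x)) = \mathcal{L}_\delta^{\A}[\phi](x),
\]
and the Taylor remainder is controlled by $C k^2 |(\sqrt{a^{\A}_{\delta}})_i|^4 \|D^4\phi\|_0$. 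Since $|(\sqrt{a^{\A}_{\delta}})_i|^2 \leq |a^{\A}_{\delta}| \leq K\delta^{2-\sigma}$ by Lemma~\ref{lem:err_local_dif_sym_odd}(ii), this gives exactly \eqref{SL-bd}, namely a bound of order $\delta^{2(2-\sigma)} k^2 \|D^4\phi\|_0$.

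For the first term I would invoke the standard error estimate for multilinear interpolation: since $i_h$ is exact on affine functions, is built from a convex combination of nodal values (the $\omega_{\bj}\geq 0$ with $\sum_{\bj}\omega_{\bj}=1$), and its support lies within $O(h)$ of the query point, one has $\|i_h[\psi]-\psi\|_0 \leq C h^2 \|D^2\psi\|_0$ for any $\psi\in C_b^2$. Applying this with $\psi(\cdot)=\phi(\cdot\pm k(\sqrt{a^{\A}_{\delta}})_i)$ (so $\|D^2\psi\|_0=\|D^2\phi\|_0$) and dividing by $2k^2$ yields \eqref{SL_bd_interpolant}. Adding the two estimates proves the lemma.

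The proof has essentially no obstacle: the only mild care required is checking that all constants depend only on $N,\sigma$, and the data bounds in \ref{A3}--\ref{A7} rather than on $\delta,\A,k,h$. This is automatic because $|a^{\A}_{\delta}|$ enters only through the uniform bound of Lemma~\ref{lem:err_local_dif_sym_odd}(ii), and the multilinear interpolation constant is purely geometric and independent of the shifted function.
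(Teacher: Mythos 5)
Your proposal is correct and follows essentially the same route as the paper: the paper obtains the lemma precisely by combining \eqref{SL-bd} and \eqref{SL_bd_interpolant} through the pivot $\mathcal{D}^{\A}_{\delta,k}$, exactly your triangle-inequality splitting. The only difference is that you re-derive the two intermediate bounds (symmetric fourth-order Taylor expansion with the $|a^{\A}_{\delta}|\leq K\delta^{2-\sigma}$ bound from Lemma \ref{lem:err_local_dif_sym_odd}(ii), and the second-order multilinear interpolation estimate), which the paper simply cites as already established.
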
 

\smallskip
\noindent \textit{(ii) Discretization of the nonlocal term: } We follow  \cite[Section~3]{BCJ1} and approximate $\I^{\A, \delta}$ by the quadrature
 \begin{align}\label{eq:discrete_nonlocal}
   \I_h^{\A, \delta}[\phi] = 
\sum_{\bj \in \Z^N} \big(\phi(x+x_{\bj})-\phi(x)\big)
   \kappa_{h,\bj}^{\A,\delta}; \quad  \kappa_{h,\bj}^{\A,\delta} =\textstyle \int_{|z|>\delta} \omega_{\bj}(\eta^{\A}(z); h)\nu_{\A}(dz),
 \end{align}
where $\{\omega_{\bj}\}_\bj$ is the basis  for multilinear interpolation defined above.              
Since $\omega_{\bj}\geq 0$,  $\kappa_{h,\bj}^{\A,\delta}\geq 0$, and the approximation $\I^{\A, \delta}_h$ is monotone. A Taylor expansion gives an estimate on the local truncation error, c.f. \cite{BCJ1}: 
\begin{lem}\label{lem:trunc_err2}
Assume \ref{A3}-\ref{A4} and \ref{A6}. Then  there is $K>0$ independent of $h,\delta,\A,\phi$ such that
\begin{align}\label{nonl_trunc1}
\big| \I^{\A,\delta}[\phi](x) - \I_h^{\A,\delta}[\phi](x)\big|  \leq K \frac{h^2}{\delta^{\sigma}} \|D^2 \phi\|_0.
\end{align} 

\end{lem}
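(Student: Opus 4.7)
The plan is to first rewrite the discrete operator $\mathcal{I}_h^{\A,\delta}[\phi](x)$ as an integral against $\nu_\A$ so that it can be compared pointwise with $\mathcal{I}^{\A,\delta}[\phi](x)$, and then to bound the integrand by the standard multilinear-interpolation error. Since $\{\omega_{\bj}(\cdot;h)\}_\bj$ is a partition of unity (i.e.\ $\sum_\bj \omega_{\bj}(y;h)=1$), we may pull the quadrature weights into the integral and exchange summation and integration (Fubini, using $\omega_\bj\ge 0$) to obtain
\begin{align*}
\mathcal{I}_h^{\A,\delta}[\phi](x) = \int_{|z|>\delta}\sum_{\bj\in\Z^N} \bigl(\phi(x+x_\bj)-\phi(x)\bigr)\omega_\bj(\eta^\A(z);h)\,\nu_\A(dz).
\end{align*}
Setting $\psi_x(y):=\phi(x+y)$ and recalling that $i_h[\psi_x](y)=\sum_\bj \psi_x(x_\bj)\omega_\bj(y;h)$, the partition of unity identity gives $\sum_\bj \phi(x)\omega_\bj(\eta^\A(z);h)=\phi(x)$, so that
\begin{align*}
\mathcal{I}^{\A,\delta}[\phi](x)-\mathcal{I}_h^{\A,\delta}[\phi](x) = \int_{|z|>\delta}\bigl(\psi_x(\eta^\A(z))-i_h[\psi_x](\eta^\A(z))\bigr)\,\nu_\A(dz).
\end{align*}

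Next I would apply the classical pointwise error bound for multilinear interpolation on a grid of mesh size $h$, which gives $|\psi_x(y)-i_h[\psi_x](y)|\le C h^2\|D^2\psi_x\|_0= C h^2\|D^2\phi\|_0$ uniformly in $y$. Plugging this in and pulling the $L^\infty$-norm out yields
\begin{align*}
\bigl|\mathcal{I}^{\A,\delta}[\phi](x)-\mathcal{I}_h^{\A,\delta}[\phi](x)\bigr|\le C h^2\|D^2\phi\|_0\int_{|z|>\delta}\nu_\A(dz).
\end{align*}

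The final step is to control $\int_{|z|>\delta}\nu_\A(dz)$ uniformly in $\A$. Split the integral at $|z|=1$. The tail is handled by \ref{A4}, giving $\int_{|z|>1}\nu_\A(dz)\le K$. For the inner annulus, assumption \ref{A6} (which in particular forces $M=N$ for the bound to be dimensionally consistent) yields
\begin{align*}
\int_{\delta<|z|<1}\nu_\A(dz)\le C\int_{\delta<|z|<1}\frac{dz}{|z|^{N+\sigma}}\le C'\delta^{-\sigma}.
\end{align*}
Since $\delta\in(0,1)$, the tail contribution $K$ is absorbed into $C'\delta^{-\sigma}$, and combining the two bounds produces the claimed estimate with a constant $K$ independent of $h,\delta,\A,\phi$.

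The only nontrivial point in this argument is the book-keeping in the very first step, namely the identification of $\mathcal{I}_h^{\A,\delta}[\phi](x)$ as an integral whose integrand is \emph{exactly} the interpolation error of $\phi$ at the jump site $\eta^\A(z)$. Once this is done, the rest is a direct interpolation estimate plus the standard L\'evy-measure bound, and no subtle cancellation in the integrand is needed (unlike in the estimation of $\mathcal{I}^\A_\delta$ on the singular region, where symmetry had to be exploited).
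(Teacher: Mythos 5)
Your proposal is correct and is essentially the argument the paper intends: it only sketches the proof by citing \cite[Section~3]{BCJ1}, where the same steps appear, namely recognizing $\I_h^{\A,\delta}[\phi]$ as the integral of the multilinear interpolant $i_h[\phi(x+\cdot)]$ evaluated at $\eta^{\A}(z)$, invoking the Taylor-based interpolation error $O(h^2\|D^2\phi\|_0)$, and bounding $\nu_\A(\{|z|>\delta\})\leq C\delta^{-\sigma}$ via \ref{A6} (read with $M=N$, as the paper's other uses of \ref{A6} confirm) and \ref{A4}. Your bookkeeping, including the Fubini justification via $\omega_{\bj}\geq 0$, $\sum_{\bj}\omega_{\bj}=1$, and finiteness of $\nu_\A$ on $\{|z|>\delta\}$, is sound, so there is nothing to add.
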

 
 \smallskip
\noindent \textit{(iii) Discretization of the nonlocal equation \eqref{eqn:main}:} 
\begin{align}\label{approx:eqn1}
 \sup_{\A \in \mathcal{A}} \Big\{ f^{\A}(x) + c^{\A}(x)u(x)- \mathcal{L}^{\A}_{\delta,k,h} [u](x) - \mathcal{I}_h^{\A, \delta}[u](x) \big\} =0 \quad \text{in} \quad \rn,
\end{align}  
or in weakly non-degenerate case \eqref{eqn:main:wdeg:nonlocal} where $c^{\A}(x)= \lambda$, 
\begin{align}\label{eqn:approx:wdeg:nonlocal}
\lambda \, v(x) + \sup_{\A \in \mathcal{A}} \left\{ f^{\A}(x) - \mathcal{L}^{\A}_{\delta,k,h} [v](x) - \mathcal{I}_h^{\A, \delta}[v](x) \right\} =0 \quad \text{in} \quad \rn. 
 \end{align} 

\subsection{Properties and convergence analysis for the schemes}

We state wellposedness, comparison, $L^\infty$-stability, and $L^\infty$-convergence results for the schemes in different settings. 
\begin{thm}[wellposedness, stability]\label{thm:epprx_exist1}
Assume \ref{A1}-\ref{A4}.
\begin{itemize}
\item[(a)] There exists a unique 
solution $u_h\in C_b(\rn)$ of \eqref{approx:eqn1}. \smallskip
\item[(b)] If $u_h,v_h \in C_b(\rn)$ are sub and supersolutions of \eqref{approx:eqn1}, then $u_h\leq v_h$.\smallskip
\item[(c)]  If $u_h$ is the unique solution of \eqref{approx:eqn1}, then $|u_h|_0 \leq C \sup_{\A \in \mathcal{A}} |f^{\A}|_0.$
\end{itemize} 
\end{thm}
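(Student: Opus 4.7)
The plan is to exploit the monotone linear structure of the scheme: both $\mathcal{L}^{\A}_{\delta,k,h}$ and $\mathcal{I}_h^{\A,\delta}$ take the form $\sum_{\beta}\mu^{\A}_{\beta}\bigl(\phi(x+\xi^{\A}_{\beta}) - \phi(x)\bigr)$ with $\mu^{\A}_{\beta} \geq 0$ and combined total mass $W^{\A} \leq \bar W := N/k^2 + K$, where $K$ comes from \ref{A4}. I would first prove (b) (which also delivers uniqueness in (a)), then (a) by a Banach fixed-point argument, and finally (c) by comparison with constants.

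\textbf{Comparison (b).} Given a subsolution $u$ and a supersolution $v$ in $C_b(\rn)$, set $w := u - v$ and $M := \sup_{\rn} w$, which is finite by boundedness of $u$ and $v$. For any $\epsilon > 0$ I would pick $x_n \in \rn$ with $w(x_n) > M - \epsilon$, and use the $\sup_{\A}$-structure of the supersolution inequality to select $\A_n \in \mathcal{A}$ with
\begin{align*}
f^{\A_n}(x_n) + c^{\A_n}(x_n)\, v(x_n) - \bigl(\mathcal{L}^{\A_n}_{\delta,k,h} + \mathcal{I}_h^{\A_n,\delta}\bigr)[v](x_n) \geq -\epsilon.
\end{align*}
Subtracting from the subsolution inequality at $(x_n, \A_n)$ (which holds for every $\A$, in particular for $\A_n$), and using the global bound $w(x_n + \xi) - w(x_n) \leq M - (M - \epsilon) = \epsilon$ for every shift $\xi$ appearing in the operator, I obtain
\begin{align*}
\lambda(M - \epsilon) \leq c^{\A_n}(x_n)\, w(x_n) \leq \epsilon + \bar W\, \epsilon.
\end{align*}
Sending $\epsilon \to 0$ forces $\lambda M \leq 0$, so $u \leq v$.

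\textbf{Existence (a).} I would recast \eqref{approx:eqn1} as the fixed-point equation $u = T[u]$, where
\begin{align*}
T[u](x) := \inf_{\A \in \mathcal{A}} \frac{A^{\A}[u](x) - f^{\A}(x)}{c^{\A}(x) + W^{\A}}, \qquad A^{\A}[u] := \mathcal{L}^{\A}_{\delta,k,h} u + \mathcal{I}_h^{\A,\delta} u + W^{\A} u.
\end{align*}
Since $A^{\A}$ is a non-negative combination of shifts with total mass exactly $W^{\A}$, $\|A^{\A}[u-v]\|_0 \leq W^{\A}\, \|u-v\|_0$. Combined with $c^{\A} \geq \lambda > 0$, this shows that $T$ is a strict contraction on $\bigl(C_b(\rn), \|\cdot\|_0\bigr)$ with ratio $q = \bar W/(\lambda + \bar W) < 1$. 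That $T$ maps $C_b(\rn)$ into itself follows from joint continuity of $(\A, x) \mapsto (A^{\A}[u](x) - f^{\A}(x))/(c^{\A}(x) + W^{\A})$ together with compactness of $\mathcal{A}$ (a Berge-type argument), using \ref{A1}--\ref{A2}, continuity of the matrix square root for $\sqrt{a^{\A}_{\delta}}$, and dominated convergence for the weights $\kappa^{\A,\delta}_{h,\mathbf{j}}$. Banach's fixed-point theorem then provides a unique $u_h \in C_b(\rn)$.

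\textbf{Stability (c).} The constants $\pm C_0 := \pm \lambda^{-1} \sup_{\A}\|f^{\A}\|_0$ are respectively a super- and subsolution since $\mathcal{L}^{\A}_{\delta,k,h}[\pm C_0] = \mathcal{I}_h^{\A,\delta}[\pm C_0] = 0$ and $\lambda C_0 \geq \sup_{\A} \|f^{\A}\|_0$. Comparison via (b) then yields $\|u_h\|_0 \leq C_0 = \lambda^{-1} \sup_{\A}\|f^{\A}\|_0$. I expect the main technical obstacle to be the continuity-of-$T$ step: ensuring joint continuity in $(\A, x)$ despite the presence of multilinear interpolation and the $\A$-dependent square root $\sqrt{a^{\A}_{\delta}}$. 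This reduces to continuity of the matrix square root on the cone of non-negative symmetric matrices, combined with dominated convergence for the $\A$-continuity of the nonlocal weights under \ref{A1}--\ref{A4}.
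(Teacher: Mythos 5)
Your proposal is correct and takes essentially the same route as the paper's (sketched) proof: the paper establishes (a) by a Banach fixed-point argument (citing \cite[Lemma 3.1]{BJK3}), (b) from the positivity of the scheme's coefficients, and (c) by comparing with the constants $\pm\frac{1}{\lambda}\sup_{\A}\|f^{\A}\|_0$ --- precisely your three steps, with your explicit contraction map $T$ and near-optimal control $\A_n$ filling in the details the paper omits. One small caveat: your Berge-type continuity argument for $T[u]$ invokes compactness of $\mathcal{A}$ and $\A$-continuity of the weights $\kappa^{\A,\delta}_{h,\mathbf{j}}$, neither of which is granted by \ref{A1}--\ref{A4} (\ref{A1} only gives separability, and no continuity of $\A\mapsto\nu_{\A}$ is assumed); this is easily repaired by instead bounding $|T[u](x)-T[u](y)|\leq\sup_{\A}\big|g^{\A}(x)-g^{\A}(y)\big|$ and checking equicontinuity in $\A$ directly from the uniform bounds of \ref{A2}--\ref{A4} (in particular $\|\eta^{\A}\|_0\leq K$ makes the nonlocal sum effectively finite-range, uniformly in $\A$).
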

\begin{proof}
Part~(a)  can be proved using Banach fixed point arguments, we refer to \cite[Lemma 3.1]{BJK3} for details. Part (b) is a consequence of the scheme having positive coefficients. Finally, part (c) follows from (b) by taking $\pm \frac{1}{\lambda}\sup_{\A \in \mathcal{A}} |f^{\A}|_0$ as super and sub-solution of the scheme \eqref{approx:eqn1} respectively.  
\end{proof}

If the solutions of \eqref{eqn:main} are very smooth ($C_b^4$), then we get the best possible convergence rate for our scheme -- what some would call the accuracy of the method: 
\begin{prop}[Smooth solutions] \label{prop:accuracy-scheme1}
Assume \ref{A4}--\ref{A7}, $\sigma\in(0,2)$, $u\in C^4_b(\rn)$ solves \eqref{eqn:main}, and  $u_h$ solves \eqref{approx:eqn1} with $k=O(h^{\frac\sigma4})$ and $\delta= O(h^{\frac{1}{2}})$. Then
there is $C>0$ such that
\begin{align*}
    |u-u_h|\leq C h^{2-\frac{\sigma}{2}}.
\end{align*}
\end{prop}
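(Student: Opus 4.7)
My plan is to run the classical consistency-plus-stability argument, where the stability is provided by the comparison principle for the scheme (Theorem \ref{thm:epprx_exist1}(b)) and the consistency is the sum of the three truncation bounds proved in Lemmas \ref{lem:err_local_dif_sym_odd}, \ref{lem:local_trunc1}, \ref{lem:trunc_err2}. Denote the exact HJB operator by $F[\phi]:=\sup_\A\{f^\A+c^\A\phi-\I^\A[\phi]\}$ and the scheme operator by $F_h[\phi]:=\sup_\A\{f^\A+c^\A\phi-\mathcal{L}^\A_{\delta,k,h}[\phi]-\I^{\A,\delta}_h[\phi]\}$, so that $F[u]\equiv 0$ and $F_h[u_h]\equiv 0$. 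It suffices to bound $F_h[u]$ pointwise and then to transfer that bound into an $L^\infty$ estimate on $u-u_h$.

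For the consistency step, I would use $|\sup_\A g^\A-\sup_\A h^\A|\leq \sup_\A|g^\A-h^\A|$ and the decomposition $\I^\A=\I^\A_\delta+\I^{\A,\delta}$ from \eqref{eqn:neq_form1} to write
\[
|F_h[u]-F[u]|\leq \sup_\A\Bigl(|\I^\A_\delta[u]-\mathcal{L}^\A_\delta[u]|+|\mathcal{L}^\A_\delta[u]-\mathcal{L}^\A_{\delta,k,h}[u]|+|\I^{\A,\delta}[u]-\I^{\A,\delta}_h[u]|\Bigr).
\]
Since $u\in C^4_b(\rn)$, the three truncation lemmas apply and yield the combined estimate
\[
\|F_h[u]\|_0 \;\leq\; C\,\|u\|_4\Bigl(\delta^{4-\sigma}+\delta^{2(2-\sigma)}k^2+\tfrac{h^2}{k^2}+\tfrac{h^2}{\delta^\sigma}\Bigr).
\]
I then balance the four contributions. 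Setting $\delta=h^{1/2}$ equates the first and fourth terms to $h^{2-\sigma/2}$ and reduces the second and third to $h^{2-\sigma}k^2$ and $h^2/k^2$; equalizing the latter pair gives $k=h^{\sigma/4}$, making all four terms equal to $h^{2-\sigma/2}$. Hence $\|F_h[u]\|_0\leq C h^{2-\sigma/2}$.

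For the comparison step, I would exploit that both $\mathcal{L}^\A_{\delta,k,h}$ and $\I^{\A,\delta}_h$ annihilate constants (inspect \eqref{local_epprox_term} and \eqref{eq:discrete_nonlocal}) together with $c^\A\geq\lambda>0$, which implies for any $M\geq 0$ that
\[
F_h[u+Mh^{2-\sigma/2}] \;\geq\; F_h[u]+\lambda M h^{2-\sigma/2} \;\geq\; (\lambda M-C)\,h^{2-\sigma/2}.
\]
Choosing $M=C/\lambda$ makes $u+Mh^{2-\sigma/2}$ a supersolution of $F_h=0$, while $u_h$ is the exact solution, so Theorem \ref{thm:epprx_exist1}(b) gives $u_h\leq u+(C/\lambda)h^{2-\sigma/2}$. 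The symmetric argument with $-Mh^{2-\sigma/2}$ produces the reverse inequality, completing the proof. Because $u$ is already in $C^4_b$, there is essentially no obstacle here beyond the bookkeeping of parameter balancing; the genuine difficulty — dealing with solutions of low or fractional regularity and with the loss of $C^4$-bounds — is what drives the machinery in Section \ref{sec_prf_gen} via the shaking-of-coefficients regularization.
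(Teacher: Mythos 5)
Your proposal is correct and follows essentially the same route as the paper's proof: the identical three truncation bounds (Lemmas \ref{lem:err_local_dif_sym_odd}, \ref{lem:local_trunc1}, \ref{lem:trunc_err2}) combined into the consistency term $B_{h,\delta}$, constant shifts $u\pm\frac{C}{\lambda}h^{2-\frac\sigma2}$ turned into sub/supersolutions via $c^\A\geq\lambda$, comparison from Theorem \ref{thm:epprx_exist1}(b), and the same parameter balance ($k^2=O(h/\delta^{2-\sigma})$, $\delta=O(h^{1/2})$, equivalently $k=O(h^{\sigma/4})$). The only cosmetic difference is that you invoke $|\sup_\A g^\A-\sup_\A \tilde g^\A|\leq\sup_\A|g^\A-\tilde g^\A|$ once for both directions, where the paper handles the supersolution side by picking an $\epsilon$-optimal control $\A_\epsilon$ — the same idea written out by hand.
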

This rate is always better than $1$, and approaches $1$ as $\sigma\to 2^-$. We will not discuss assumptions to have so smooth solutions, but below we will give results that holds for the solutions that exist under the assumptions of this paper.
\begin{proof}
By equation \eqref{eqn:main} and the errors bounds \eqref{err_local_dif_sym}, \eqref{local_trunc1}, \eqref{nonl_trunc1},  for any $\A\in \mathcal{A}$, 
\begin{align*}
    &f^{\A}(x) + c^{\A}(x)u(x)- \mathcal{L}^{\A}_{\delta,k,h} [u](x) - \mathcal{I}_h^{\A, \delta}[u](x)  \leq \mathcal{I}^{\A}[u](x) - \mathcal{L}^{\A}_{\delta,k,h} [u](x) - \mathcal{I}_h^{\A, \delta}[u](x) \\
    & \leq C \Big( \delta^{4-\sigma} \|D^4u\|_0 + \frac{h^2}{k^2}\|D^2u\|_0 + \delta^{2(2-\sigma)} k^2 \|D^4u\|_0 + \frac{h^2}{\delta^{\sigma}} \|D^2u\|_0 \Big) := B_{h, \delta}.
\end{align*}
This implies $u(x) - \frac{B_{h, \delta}}{\lambda}$ is a subsolution of \eqref{approx:eqn1}, and by Theorem \ref{thm:epprx_exist1} (b) that 
\begin{align*}
    u-u_h\leq \frac{B_{h, \delta}}{\lambda}.
\end{align*}
Again by \eqref{eqn:main}, the definition of the sup, and the errors bounds, for any $x\in \rn$ and $\epsilon>0$, there is a $\A_{\epsilon} \in \mathcal{A}$ such that 
\begin{align*}
    & f^{\A_{\epsilon}}(x) + c^{\A_{\epsilon}}(x)u(x) - \mathcal{L}^{\A_{\epsilon},\delta}_{k,h} [u](x) - \mathcal{I}_h^{\A_{\epsilon},\delta}[u](x) \\ 
    & \geq -\epsilon + \mathcal{I}^{\A_{\epsilon}}[u](x) - \mathcal{L}^{\A_{\epsilon},\delta}_{k,h} [u](x) - \mathcal{I}_h^{\A_{\epsilon},\delta}[u](x) \geq -\epsilon - B_{h,\delta}.
\end{align*}
 Let $\tilde{u} = u + \frac{B_{h,\delta}}{\lambda}$, and note that
\begin{align*}
   \sup_{\A \in \mathcal{A}} \Big\{ f^{\A}(x) + c^{\A}(x) \tilde{u} (x) - \mathcal{L}^{\A}_{\delta,k,h} [\tilde{u}](x) - \mathcal{I}_h^{\A, \delta}[\tilde{u}](x) \big\} \geq -\epsilon.
\end{align*}
Since $\epsilon$ and $x$ are arbitrary, $\tilde{u}$ is a supersolution of \eqref{approx:eqn1}, and then $u_h-u \leq \frac{B_{h,\delta}}{\lambda}$ by Theorem \ref{thm:epprx_exist1} (b). Since $u\in C^4_b(\rn)$, we have shown that
\begin{align*}
    |u-u_h| \leq \frac{C}{\lambda} \Big( \delta^{4-\sigma}  + \frac{h^2}{k^2} + \delta^{2(2-\sigma)}k^2  + \frac{h^2}{\delta^{\sigma}} \Big).
\end{align*}
We conclude by taking the optimal choices $k^2= O(\frac{h}{\delta^{2-\sigma}})$ and then  $\delta = O(h^{\frac{1}{2}})$.
\end{proof}

The next two results form the main contribution of this paper along with the result of section \ref{sec:fraclap}. These results give very precise rates of convergence for our monotone numerical approximations in cases of strongly and weakly non-degenerate equations respectively. 
 Note that in these results the solutions $u$ of \eqref{eqn:main} and \eqref{eqn:main:wdeg:nonlocal} will not be smooth. The proofs of these results are given in Section~ \ref{sec_prf_gen}.

\begin{thm}[Strongly degenerate equations]\label{thm:estimate_sym_main}
Assume $\sigma\in(0,2)$, $h\in(0,1)$, \ref{A1}-\ref{A7}, $u$ and $u_h$ are solutions of  \eqref{eqn:main} and \eqref{approx:eqn1} for 
$k=O(h^{\frac{2\sigma}{4+\sigma}})$
and  $\delta= O(h^{\frac{4}{4+\sigma}})$. 
Then 
there is a $C>0$ such that 
\begin{align}\label{estimate_sym_nodrift}
|u- u_{h}| \leq C \, h^{\frac{4-\sigma}{4+\sigma}}. 
\end{align}

\end{thm}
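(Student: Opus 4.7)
The proof I would run follows Krylov's \emph{shaking of coefficients} strategy, adapted to fractional HJB equations as in \cite{BJK1,BJK2,JKL08}. The obstacle is that by Proposition \ref{thm:viscosity_exist}(c) the viscosity solution $u$ of \eqref{eqn:main} is only Lipschitz in the strongly degenerate case, so the smooth-data argument of Proposition \ref{prop:accuracy-scheme1} cannot be applied directly. The remedy is to construct a smooth proxy $u_{\epsilon,\rho}$ which (i) approximates $u$ up to an error $O(\epsilon+\rho)$, (ii) is a classical sub- or supersolution of \eqref{eqn:main}, and (iii) has quantitative derivative bounds of the form $\|D^j u_{\epsilon,\rho}\|_0\lesssim\rho^{1-j}$ that control the truncation errors from Lemmas \ref{lem:err_local_dif_sym_odd}, \ref{lem:local_trunc1}, and \ref{lem:trunc_err2}. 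Comparison for the scheme, Theorem \ref{thm:epprx_exist1}(b), then converts these consistency bounds into one-sided $L^\infty$-estimates between $u$ and $u_h$.

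For the bound $u-u_h\leq Ch^{(4-\sigma)/(4+\sigma)}$, I would first pass to the shaken equation obtained by augmenting the supremum in \eqref{eqn:main} with translations $|e|\leq\epsilon$ of the $x$-argument of $f^\alpha$, $c^\alpha$ and $\eta^\alpha$. A continuous-dependence estimate in the spirit of \cite{JK05} yields a solution $u_\epsilon$ with $\|u_\epsilon-u\|_0\leq C\epsilon$, and by construction every translate $u_\epsilon(\cdot+e)$ with $|e|\leq\epsilon$ is a viscosity subsolution of \eqref{eqn:main}. Convolving with a standard mollifier $\omega_\rho$ ($\rho\leq\epsilon$), the function $u_\epsilon^\rho:=u_\epsilon\ast\omega_\rho$ is smooth, Lipschitz-controlled, and still a (now classical) subsolution of \eqref{eqn:main}, because the sup over $\alpha$ is a sublinear operator and $\mathcal I^\alpha$ is linear, so convex combinations of subsolutions remain subsolutions. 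Plugging $u_\epsilon^\rho$ into the scheme \eqref{approx:eqn1} and collecting the bounds from Lemmas \ref{lem:err_local_dif_sym_odd}--\ref{lem:trunc_err2} produces
\begin{align*}
\sup_{\alpha\in\mathcal A}\!\bigl\{f^\alpha+c^\alpha u_\epsilon^\rho-\mathcal L^\alpha_{\delta,k,h}[u_\epsilon^\rho]-\mathcal I^{\alpha,\delta}_h[u_\epsilon^\rho]\bigr\}\leq C\Bigl(\tfrac{\delta^{4-\sigma}}{\rho^{3}}+\tfrac{\delta^{4-2\sigma}k^{2}}{\rho^{3}}+\tfrac{h^{2}}{k^{2}\rho}+\tfrac{h^{2}}{\delta^{\sigma}\rho}\Bigr)=:CB,
\end{align*}
so that $u_\epsilon^\rho-CB/\lambda$ is a subsolution of the scheme, and comparison gives $u\leq u_h+C(\epsilon+\rho)+CB/\lambda$.

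The reverse inequality $u_h-u\leq Ch^{(4-\sigma)/(4+\sigma)}$ is obtained by a symmetric argument on the discrete side: one shakes the coefficients of the scheme \eqref{approx:eqn1} in $x$, solves the shaken discrete equation, and mollifies to produce a smooth classical supersolution of \eqref{eqn:main}; this uses a uniform discrete Lipschitz estimate on $u_h$ established by the standard translation-comparison trick as in \cite{BJK1}. Balancing by choosing $\epsilon=\rho=h^{(4-\sigma)/(4+\sigma)}$ together with the stated $k=h^{2\sigma/(4+\sigma)}$ and $\delta=h^{4/(4+\sigma)}$, a short computation shows that each of the four terms in $B$ as well as the shaking/mollification errors $\epsilon+\rho$ equals $h^{(4-\sigma)/(4+\sigma)}$, yielding \eqref{estimate_sym_nodrift}. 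The main obstacle is the reverse bound: shaking the jump map $\eta^\alpha(z)$ must be performed so that both the perturbed equation and the perturbed monotone scheme admit a continuous-dependence estimate of order $\epsilon$, and a uniform Lipschitz bound for $u_h$ independent of $h,k,\delta$ must be obtained in the discrete setting; the bookkeeping needed to keep the four truncation terms of $B$ equilibrated at the same rate is delicate and, as the excerpt's introduction stresses, it is precisely this accounting that distinguishes the optimal rate here from the suboptimal rates of \cite{BJK1,BCJ1}.
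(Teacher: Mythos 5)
Your proposal reaches the correct rate with the correct balancing --- your four-term bound $B$ coincides with the paper's $M_{\epsilon,\delta}$ in \eqref{prf_err_M1}, and with $k=h^{\frac{2\sigma}{4+\sigma}}$, $\delta=h^{\frac{4}{4+\sigma}}$, $\epsilon=\rho=h^{\frac{4-\sigma}{4+\sigma}}$ every term indeed equilibrates at $h^{\frac{4-\sigma}{4+\sigma}}$ --- but you take a genuinely heavier route than the paper. The paper does \emph{not} shake the coefficients in this proof: since the scheme \eqref{approx:eqn1} is posed at every $x\in\rn$ (not only at grid points) and the operators $\mathcal{L}^{\A}_{\delta,k,h}$, $\mathcal{I}^{\A,\delta}_h$, $\mathcal{I}^{\A}$ are translation invariant ($\eta^{\A}$ and $\nu_{\A}$ are $x$-independent, so your shaking of the $x$-argument of $\eta^{\A}$ is in fact vacuous here), all $x$-dependence is zeroth order through $f^{\A}$, $c^{\A}$. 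The paper therefore convolves the pointwise subsolution inequalities for $u_h$ (resp.\ $u$) directly with $\rho_\epsilon$: the commutator errors $\|f^{\A}*\rho_\epsilon-f^{\A}\|_0\leq K\epsilon$ and $\|(c^{\A}u_h)*\rho_\epsilon-c^{\A}u_{h,\epsilon}\|_0\leq CK^2\epsilon$ come for free from \ref{A2}, and since $c^{\A}\geq\lambda>0$ one subtracts $\tilde{M}_{\epsilon,\delta}/\lambda$ to get an exact classical sub\-solution; no shaken PDE with continuous dependence, and no shaken discrete equation, are needed. Your shaking architecture, in the spirit of \cite{BJK1,BJK2,JKL08}, is sound and would become genuinely necessary for $x$-dependent jump data $\eta^{\A}(x,z)$ or $\nu_{\A}(x,dz)$; here it buys generality at the price of extra machinery (wellposedness and uniform estimates for the shaken problems, and the Riemann-sum argument showing a mollification of viscosity subsolutions is again a subsolution). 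Two small repairs: in the reverse direction you should produce from the (shaken, mollified) $u_h$ a smooth \emph{subsolution} of \eqref{eqn:main} and compare with $u$ via Proposition \ref{thm:viscosity_exist}(a) --- a supersolution of \eqref{eqn:main} would give back the bound $u\leq u_h+\mathrm{err}$, not $u_h-u\leq\mathrm{err}$; and the uniform Lipschitz bound on $u_h$, which both your route and the paper's (through $\|D^m u_{\epsilon,h}\|_0\leq C\|u_h\|_{0,1}\epsilon^{1-m}$) require, deserves an explicit statement --- it follows by the translation-comparison trick for the scheme exactly as you indicate.
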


\begin{rem}

\noindent (a) \,   
The rate $\frac{4-\sigma}{4+\sigma}$ is decreasing in $\sigma$. It equals $\frac35$ at $\sigma=1$, approaches $1$ as $\sigma\to0^+$, and $\frac{1}{3}$ as $\sigma\to 2^-$. \smallskip

\noindent (b) \, The "CFL" conditions $k=O(h^{\frac{2\sigma}{4+\sigma}})$ and $\delta= O(h^{\frac{4}{4+\sigma}})$ imply that $\frac hk\to 0$ and $\frac h\delta\to 0$ as $h\to 0$. 
\smallskip

\noindent (c) \,   Conditions \ref{A5} and \ref{A7} are symmetry assumptions on the singular part of $\I^{\A}$ which lead to best possible rates. We refer to Section \ref{sec:exten} for extensions to nonsymmetric nonlocal operators and the corresponding (slightly) lower rates.  
\end{rem}

In the weakly non-degenerate case we get an improvement in the rate due to the better regularity of solutions both for the equation and the numerical scheme:  

\begin{thm}[weakly non-degenerate equations]\label{thm:er_bound:wdeg}
Assume $\sigma\in(0,2)$, $h\in(0,1)$, \ref{A1}-\ref{A7}, \ref{C1}-\ref{C3}, $u$ and $u_h$  are the solutions of \eqref{eqn:main:wdeg:nonlocal} and  \eqref{approx:eqn1} for  
$k=O(h^{\frac{2\sigma}{4+\sigma}})$ and $\delta= O(h^{\frac{4}{4+\sigma}})$. Then there is $C>0$ independent of $h$ such that  
\begin{align}\label{error_estimate_wdeg}
|u- u_{h}| \leq \left\{
       \begin{array}{ll}
      C \, h^{\frac{4-\sigma}{4+\sigma}} \quad &\mbox{for} \quad 0< \sigma \leq 1, \\[0.2cm]
      C \, h^{\frac{\sigma(4-\sigma)}{4+ \sigma}} \quad &\mbox{for} \quad 1< \sigma <2 .
       \end{array}
  \right.
\end{align}
\end{thm}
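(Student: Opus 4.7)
The plan is to use the ``shaking the coefficients'' method adapted to the fractional setting. For $\sigma\le 1$ only the Lipschitz regularity of Proposition \ref{thm:viscosity_exist}(c) is available for both $u$ and $u_h$, so the argument collapses to that of Theorem \ref{thm:estimate_sym_main} and yields the first line of \eqref{error_estimate_wdeg}. The substantive case is $\sigma\in(1,2)$, where Corollary \ref{cor:fracbd:soln:nonlocal} provides $(-\Delta)^{\sigma/2}u\in L^\infty(\rn)$; the improved rate will come entirely from converting this extra fractional smoothness into sharper derivative bounds on the mollified function that enters the consistency estimates.

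For the upper bound $u-u_h$, I would first shake the coefficients of \eqref{eqn:main:wdeg:nonlocal} over a ball of radius $\epsilon$: the shaken solution $u^\epsilon$ is a viscosity subsolution of \eqref{eqn:main:wdeg:nonlocal} up to $O(\epsilon)$, and since \ref{C1} is preserved by the shake, the proofs of Theorem \ref{thm:reg:wdeg:nonlocal} and Corollary \ref{cor:fracbd:soln:nonlocal} apply uniformly in $\epsilon$ to give $\|(-\Delta)^{\sigma/2}u^\epsilon\|_0\le K$. I then mollify $u^\epsilon$ not by a standard kernel but by the fractional heat kernel $P_\tau$ of $(-\Delta)^{\sigma/2}$, setting $u^\epsilon_\tau:=u^\epsilon\ast P_\tau$. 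This choice is dictated by the regularity just obtained and yields
\begin{equation*}
\|u^\epsilon-u^\epsilon_\tau\|_0\le C\tau\qquad\text{and}\qquad \|D^m u^\epsilon_\tau\|_0\le C\,\tau^{-(m-\sigma)/\sigma}\quad\text{for }m>\sigma.
\end{equation*}
Plugging $u^\epsilon_\tau$ into the scheme and invoking the consistency bounds \eqref{err_local_dif_sym}, \eqref{local_trunc1} and \eqref{nonl_trunc1} produces an approximate subsolution of \eqref{eqn:approx:wdeg:nonlocal} up to
\begin{equation*}
\mathcal E=\epsilon+\tau+\bigl(\delta^{4-\sigma}+\delta^{2(2-\sigma)}k^2\bigr)\tau^{-(4-\sigma)/\sigma}+\Bigl(\tfrac{h^2}{k^2}+\tfrac{h^2}{\delta^\sigma}\Bigr)\tau^{-(2-\sigma)/\sigma},
\end{equation*}
and scheme comparison (Theorem \ref{thm:epprx_exist1}(b)) then yields $u-u_h\le C\mathcal E$.

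For the reverse inequality one would like to shake and mollify $u_h$ and test it against the continuous equation. This forces a \emph{discrete} analogue of Corollary \ref{cor:fracbd:soln:nonlocal}: a uniform bound, independent of $h$, on an appropriate discrete fractional Laplacian of $u_h$. The plan is to replay Step~1 of the proof of Theorem \ref{thm:reg:wdeg:nonlocal} at the scheme level, using discrete comparison (Theorem \ref{thm:epprx_exist1}(b)) together with the fact that the linear operators $\mathcal L^{\A}_{\delta,k,h}$ and $\I_h^{\A,\delta}$ commute by Fubini with a suitably discretized version of the auxiliary operator $\mathcal J^r$ built from the $\alpha_0$-weights. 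The linear regularity result of \cite{Oton2016} is unavailable in the discrete setting, but only a weaker control on the discrete fractional Laplacian is actually needed for the consistency argument. Once this is established, the symmetric shake-and-mollify argument yields $u_h-u\le C\mathcal E$ as well.

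Finally, with the prescribed CFL choices $k=O(h^{2\sigma/(4+\sigma)})$ and $\delta=O(h^{4/(4+\sigma)})$, the dominant truncation terms in $\mathcal E$ are of order $h^{4(4-\sigma)/(4+\sigma)}\tau^{-(4-\sigma)/\sigma}$; setting this equal to $\tau\sim\epsilon$ gives the optimal choice $\tau=h^{\sigma(4-\sigma)/(4+\sigma)}$ and hence the stated rate. The main obstacle I anticipate is the discrete regularity step for the reverse bound: the approximate commutativity between the scheme's operators must be controlled precisely enough not to worsen the final exponent, and the continuous elliptic input from \cite{Oton2016} has to be replaced by a careful scheme-level argument.
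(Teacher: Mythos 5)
Your overall architecture coincides with the paper's (regularity of $u$ from Theorem \ref{thm:regularity:vis_soln:wdeg:nonlocal}, a discrete analogue of Theorem \ref{thm:reg:wdeg:nonlocal} for $u_h$, fractional-heat-kernel mollification, comparison both ways, and the same parameter couplings), but two steps as written are flawed. On the continuous side, the shaking step is both unnecessary and incompatible with your mollifier: in \eqref{eqn:main:wdeg:nonlocal} the only $x$-dependent datum is $f^{\A}$ and the operators are translation invariant, so convexity of the supremum makes \emph{any} convolution of $u$ with a probability kernel a subsolution with $f^{\A}$ replaced by its mollification, at cost $\|f^{\A}-(f^{\A})^{(\epsilon)}\|_0=O(\epsilon^{\sigma})$ by \ref{C3} and Lemma \ref{lem:dif:conv} --- no shake, and no lossy $O(\epsilon)$ penalty. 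Your shake cannot in any case be combined with heat-kernel mollification in the standard way: $\tilde K^{\sigma}$ is not compactly supported, and at your scales $\tau\sim\epsilon$ the spatial mollification radius $\tau^{1/\sigma}$ far exceeds the shake radius $\epsilon$ (since $\sigma>1$), so ``the mollified shaken subsolution is a subsolution'' fails as stated. The paper actually uses the \emph{standard} compactly supported mollifier for the upper bound, admissible because $u\in C^{1,\sigma-1}(\rn)$ already gives $\|u-u^{(\epsilon)}\|_0\leq C\epsilon^{\sigma}$ and $\|D^m u^{(\epsilon)}\|_0\leq C\epsilon^{\sigma-m}$; the fractional heat kernel is only forced on the discrete side.

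The genuine gap is in the lower bound, exactly where you flag difficulty. Your claim that ``only a weaker control on the discrete fractional Laplacian is actually needed'' is not correct: a uniform bound on $\J^{\A_0,\delta}_h[u_h]$ (the paper's Theorem \ref{thm:reg:approx:wdeg:nonlocal}, proved by precisely your replay of Theorem \ref{thm:reg:wdeg:nonlocal} at scheme level --- though note this step additionally requires Lemma \ref{lem:regularize_f} and the constraint $k\gtrsim \delta^{\sigma/2}$, since $\mathcal{L}^{\A_0}_{\delta,k,h}[f^{\A}]$ is not trivially bounded for $f^{\A}$ merely $C^{1,\beta}$) does not by itself yield the derivative bounds $\|D^m u_h^{[\tau]}\|_0\lesssim \tau^{-(m-\sigma)/\sigma}$ that your symmetric $\mathcal{E}$ presupposes; from Lipschitz regularity alone one only gets $\tau^{-(m-1)/\sigma}$, which returns the strongly degenerate rate. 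The idea you are missing is that no scheme-level substitute for \cite{Oton2016} is needed: the paper mollifies \emph{first} and applies the continuous elliptic regularity theorem (\cite{DRSV-regularity}, via Lemma \ref{lem:fracbd:conv:nonlocal}(ii)) to the smooth function $\tilde u_h=u_h*\phi$, obtaining $\|\tilde u_h\|_{1,\sigma-1}\leq K\big(1+\delta^{4-\sigma}\|D^4\tilde u_h\|_0+\cdots\big)$, and then exploits the semigroup property of $\tilde K^{\sigma}$ (Lemma \ref{lem:appndx_2}) to bootstrap the $\varepsilon^{-(m-\sigma)}$ bounds of Lemma \ref{lem:bd:der:numsol_reg:wdeg:nonlocal}. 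Relatedly, the approximation bound $\|u_h^{[\varepsilon]}-u_h\|_0\leq C\varepsilon^{\sigma}$ implicit in your symmetric $\mathcal{E}$ is false uniformly in $h$: the correct statement (Lemma \ref{thm:regularization_bd:wdeg:nonlocal}) carries extra terms $\delta+\delta^{2-\sigma}\varepsilon^{2(\sigma-1)}+k^2\delta^{1-\sigma}+h^2\delta^{\sigma-1}k^{-2}$, and the discrete-side consistency error is a \emph{product} of error factors rather than your additive expression, so a final factorization argument is needed to verify that, at the choices $\delta=O(h^{4/(4+\sigma)})$, $k^2=O(\delta^{\sigma})$, $\varepsilon=O(h^{(4-\sigma)/(4+\sigma)})$, everything remains $O\big(h^{\sigma(4-\sigma)/(4+\sigma)}\big)$. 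Your parameter arithmetic and skeleton are right, but the lower-bound half as proposed would not close.
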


\begin{rem}
For $\sigma\leq1$, the results are the same as in Theorem \ref{thm:estimate_sym_main}.
 For $\sigma>1$, the rate of convergence is always more than $\mathcal{O}(h^{\frac{1}{2}})$, and  the rate approaches  $\mathcal{O}(h^{\frac{2}{3}})$ when $\sigma \rightarrow 2$. 
The "CFL" conditions are the same as in Theorem \ref{thm:estimate_sym_main}.
\end{rem}

\section{Powers of discrete Laplacian} \label{sec:fraclap}

In this section we consider versions of equation \eqref{eqn:main} where the nonlocal operator is the fractional Laplacian, 
\begin{align} \label{eqn:main3_wdeg}
\lambda u(x) + \sup_{\A\in \mathcal{A}} \left\{ f^{\A}(x)  + a^{\A}\,(- \Delta)^{\frac{\sigma}{2}} u(x) \right\} =0.
\end{align}
In other words, $\I^{\A}=-a^{\A}\,(- \Delta)^{\frac{\sigma}{2}}$, $\nu_\alpha(dz)=a^{\A}\frac{c_{N,\sigma}}{|z|^{N+2\sigma}}dz$, and $\eta^\alpha(z)=z$ in \eqref{n_local:term}. Here \ref{A3}--\ref{A7} trivially holds. We assume \ref{C1}, the equation is weakly non-degenerate  (otherwise the equation is purely algebraic), which here is equivalent to
\begin{align}\label{wd}
\text{there is $\alpha_0\in\mathcal A$ such that} \quad a^{\A_0}>0.
\end{align}
Under assumptions \ref{A1}, \ref{A2}, \ref{C1}, and \ref{C3},  we can use Proposition \ref{thm:viscosity_exist}, Lemma \ref{lemma:auxeqn:wdeg:nonlocal}, and Theorem \ref{thm:regularity:vis_soln:wdeg:nonlocal} to conclude wellposedness, stability, approximation, and regularity results for \eqref{eqn:main3_wdeg}.
Here we introduce and analyse a discretization 
\begin{align}\label{eqn:fraclap:aprrox}
  \lambda u_h(x) + \sup_{\A \in \mathcal{A}}\{ f^{\A}(x)  + a^{\A} (-\Delta_h)^{\frac{\sigma}{2}} [u_h](x)\} =0,
\end{align}
based on powers of the discrete Laplacian $(-\Delta_h)^{\frac{\sigma}{2}}$, see \cite{Ciaurri-Stinga-02,  EJT18b} and also \cite{BP15}. As far as we know, this is the first time this type of discretization has been considered for HJB equations. It is a very good approximation in the sense that it is a monotone method of second order accuracy. This is better than the diffusion corrected discretization of Section \ref{monotone scheme}.

Let $\Delta_h\phi(x)=\sum_{k=1}^N\frac1{h^2}\big(\phi(x+he_k)-2\phi(x)-\phi(x-he_k)\big)$ be the 2nd order central finite difference approximation of the Laplacian $\Delta\phi$, then
\begin{align}\label{discrete_fraclap}
(- \Delta_h)^{\frac{\sigma}{2}} \phi(x) := \frac{1}{\Gamma(-\frac{\sigma}{2})} \, \int_0^{\infty} \Big( e^{t\Delta_h} \phi(x) - \phi(x) \Big) \, \frac{dt}{t^{1+ \frac{\sigma}{2}}},
\end{align} 
where $U(t)=e^{t\Delta_h} \psi $ is the solution of semi-discrete heat equation 
\begin{align*}
\partial_t U(x,t) & = \Delta_h \,  U(x, t) \quad \mbox{for} \quad (x,t) \in \rn \times (0,\infty), \\
 U(x,0) & = \psi(x) \quad \mbox{for} \quad x \in \rn. 
\end{align*}  
An explicit formula for $e^{t\Delta_h} \phi$ and details related to this approximation can be found in Section 4.5 of \cite{EJT18b}. We can write approximation \eqref{discrete_fraclap} as a quadrature,
\begin{align*}
-(- \Delta_h)^{\frac{\sigma}{2}} \phi(x) = \sum_{\bj \in \zn \setminus \{0\}} \Big( \phi(x+ x_{\bj}) - \phi(x)\Big) \kappa_{h,\bj} \quad \mbox{with} \quad  \kappa_{h,\bj} \geq 0.
\end{align*} 
This is obviously a monotone approximation of the fractional Laplacian, and by Lemma 4.22 in \cite{EJT18b},  it has the following local truncation error: 
\begin{lem}\label{lem:trun_err_fraclap}
Assume $\sigma \in (0 ,2)$. Then for any smooth bounded function $\phi$, 
\begin{align}\label{fraclap_trunc}
\Big| (- \Delta_h)^{\frac{\sigma}{2}} \phi(x) - (- \Delta)^{\frac{\sigma}{2}} \phi(x)\Big| \leq C h^2 \Big(\| D^4 \phi\|_0 + \|\phi\|_0 \Big).
\end{align}
\end{lem}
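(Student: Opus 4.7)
The plan is to pass through the Bochner subordination formula \eqref{discrete_fraclap} (and its continuous analogue) so that the estimate reduces to controlling the pointwise difference of the discrete and continuous heat semigroups acting on $\phi$.

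First I would subtract the two subordination representations to obtain
\[
(-\Delta_h)^{\sigma/2}\phi(x)-(-\Delta)^{\sigma/2}\phi(x)=\frac{1}{\Gamma(-\sigma/2)}\int_0^\infty w(x,t)\,\frac{dt}{t^{1+\sigma/2}},\qquad w(x,t):=e^{t\Delta_h}\phi(x)-e^{t\Delta}\phi(x).
\]
Since $w(\cdot,0)\equiv 0$ and $\partial_t w=\Delta_h w+(\Delta_h-\Delta)e^{t\Delta}\phi$, Duhamel's principle gives $w(\cdot,t)=\int_0^t e^{(t-s)\Delta_h}(\Delta_h-\Delta)e^{s\Delta}\phi\,ds$. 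I would then estimate $\|w(\cdot,t)\|_0$ by combining three ingredients: the discrete maximum principle $\|e^{\tau\Delta_h}\psi\|_0\le\|\psi\|_0$, the Taylor-based consistency estimate $\|(\Delta_h-\Delta)\psi\|_0\le Ch^2\|D^4\psi\|_0$, and the Gaussian smoothing bound $\|D^4 e^{s\Delta}\phi\|_0\le \min\{\|D^4\phi\|_0,\,Cs^{-2}\|\phi\|_0\}$. This yields the two-sided estimate
\[
\|w(\cdot,t)\|_0\le Ch^2\min\!\Big(t\|D^4\phi\|_0,\ \|\phi\|_0^{1/2}\|D^4\phi\|_0^{1/2}\Big).
\]

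Plugging this into the subordinator integral and splitting at the threshold $s_0:=\|\phi\|_0^{1/2}\|D^4\phi\|_0^{-1/2}$, the linear-in-$t$ bound handles $t\le s_0$ and the flat bound handles $t\ge s_0$. Both contributions come out in the form $Ch^2\|\phi\|_0^{(2-\sigma)/4}\|D^4\phi\|_0^{(2+\sigma)/4}$, and weighted AM--GM (the exponents $(2-\sigma)/4$ and $(2+\sigma)/4$ sum to $1$) converts this into the claimed $Ch^2(\|\phi\|_0+\|D^4\phi\|_0)$, with $C=C(\sigma,N)$.

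The main obstacle is the semigroup estimate in Step 3. The naive Duhamel bound $\|w(\cdot,t)\|_0\lesssim h^2 t\|D^4\phi\|_0$, combined with the trivial bound $\|w(\cdot,t)\|_0\le 2\|\phi\|_0$ and an optimisation of the splitting threshold, produces only the suboptimal rate $h^\sigma$. Recovering the full second-order rate $h^2$ requires trading regularity for time-decay by exploiting the smoothing effect of the continuous heat semigroup, as described above. This is precisely the argument carried out in Lemma~4.22 of \cite{EJT18b}, which we simply invoke here.
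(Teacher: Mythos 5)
Your proposal is correct and follows essentially the same route as the paper, which proves this lemma simply by invoking Lemma~4.22 of \cite{EJT18b}: your subordination--Duhamel argument (contractivity of $e^{\tau\Delta_h}$ in the sup norm, the $O(h^2\|D^4\psi\|_0)$ consistency of $\Delta_h$, and the heat-semigroup smoothing bound $\|D^4e^{s\Delta}\phi\|_0\le\min\{\|D^4\phi\|_0,\,Cs^{-2}\|\phi\|_0\}$) is precisely the mechanism behind that cited lemma. The quantitative steps also check out: the bound $\|w(\cdot,t)\|_0\le Ch^2\min\{t\|D^4\phi\|_0,(\|\phi\|_0\|D^4\phi\|_0)^{1/2}\}$, the convergence of the subordination integral at both ends (using $\sigma<2$ near $t=0$ and $\sigma>0$ near $t=\infty$), the splitting at $s_0=\|\phi\|_0^{1/2}\|D^4\phi\|_0^{-1/2}$ yielding $Ch^2\|\phi\|_0^{(2-\sigma)/4}\|D^4\phi\|_0^{(2+\sigma)/4}$, and the final weighted AM--GM step.
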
 


We note that Theorem \ref{thm:epprx_exist1} (wellposedness and stability) also holds for \eqref{eqn:fraclap:aprrox}. 
We now state an error bound for this scheme.  The proof is given in Section \ref{sec:prf:flap}. 

\begin{thm}\label{thm:err_bd:flap:wdeg}
Assume $h\in(0,1)$, \ref{A1}, \ref{A2}, \ref{C1}, \ref{C3}, $u$ and $u_h$ are solutions of equation \eqref{eqn:main3_wdeg} and  
the scheme 
\eqref{eqn:fraclap:aprrox}. Then there is $C>0$ such that 
\begin{align}
\|u- u_{h}\|_0 \leq \left\{
       \begin{array}{ll}
      C h^{\frac{1}{2}} \quad &\mbox{for} \quad 0< \sigma \leq 1, \\[0.2cm]
      C h^{\frac{\sigma}{2}} \quad &\mbox{for} \quad 1< \sigma <2 .
       \end{array}
  \right.
\end{align} 
\end{thm}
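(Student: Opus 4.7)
The plan is a Krylov-style mollification error analysis adapted to the constant-coefficient operator $(-\Delta)^{\sigma/2}$. Because both the continuous and discrete fractional Laplacians are translation invariant, the usual shaking of the state variable is unnecessary; instead the convexity of the supremum, the symmetry of a standard mollifier, and the $C^{1,\beta}$ regularity of $f^\alpha$ from \ref{C3} will provide the needed cancellation. The proof consists of two parallel one-sided estimates for $u-u_h$ and $u_h-u$.

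For the upper bound, fix a smooth symmetric compactly-supported mollifier $\rho_\varepsilon$ and set $u^\varepsilon := u\ast\rho_\varepsilon$. The starting point is the algebraic identity
\[
f^\alpha(x) + a^\alpha(-\Delta)^{\sigma/2}u(x-y) = \bigl[f^\alpha(x-y) + a^\alpha(-\Delta)^{\sigma/2}u(x-y)\bigr] + \bigl[f^\alpha(x)-f^\alpha(x-y)\bigr].
\]
The first bracket is bounded above by $\sup_{\alpha'}\{f^{\alpha'}(x-y) + a^{\alpha'}(-\Delta)^{\sigma/2}u(x-y)\} = -\lambda u(x-y)$ via equation \eqref{eqn:main3_wdeg} at $x-y$, and the integral of the second against $\rho_\varepsilon$ is bounded uniformly in $\alpha$ by $C\|f^\alpha\|_{1,\beta}\varepsilon^{1+\beta}$, thanks to the symmetry $\int y\,\rho_\varepsilon = 0$ and a Taylor expansion of $f^\alpha$ which kills the linear term. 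Integrating the identity against $\rho_\varepsilon$ and taking $\sup_\alpha$ \emph{after} the integration -- this order is crucial, since the $\sup$ of $C^{1,\beta}$ functions is generally only Lipschitz -- gives
\[
\lambda u^\varepsilon + \sup_\alpha\bigl\{f^\alpha + a^\alpha(-\Delta)^{\sigma/2}u^\varepsilon\bigr\} \le C\varepsilon^{1+\beta}.
\]

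Swapping $(-\Delta)^{\sigma/2}$ for $(-\Delta_h)^{\sigma/2}$ via Lemma \ref{lem:trun_err_fraclap} and applying the scheme comparison principle (the extension of Theorem \ref{thm:epprx_exist1}(b) to \eqref{eqn:fraclap:aprrox}) then yields
\[
u - u_h \le \|u-u^\varepsilon\|_0 + \tfrac{C}{\lambda}\bigl(\varepsilon^{1+\beta} + h^2\|D^4u^\varepsilon\|_0 + h^2\bigr).
\]
The regularity of $u$ -- Lipschitz for $\sigma\le 1$ by Proposition \ref{thm:viscosity_exist}(c), $C^{1,\sigma-1}$ for $\sigma>1$ by Theorem \ref{thm:regularity:vis_soln:wdeg:nonlocal} -- combined with symmetry of $\rho_\varepsilon$ gives $\|u-u^\varepsilon\|_0 \le C\varepsilon^{\max(\sigma,1)}$ and $\|D^4 u^\varepsilon\|_0 \le C\varepsilon^{\max(\sigma,1)-4}$. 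Since \ref{C3} ensures $1+\beta \ge \max(\sigma,1)$, the $\varepsilon^{1+\beta}$ term is dominated, and balancing $\varepsilon^{\max(\sigma,1)}\sim h^2\varepsilon^{\max(\sigma,1)-4}$ at $\varepsilon = h^{1/2}$ gives the claimed rate $h^{\max(\sigma,1)/2}$, i.e.\ $h^{1/2}$ for $\sigma\le 1$ and $h^{\sigma/2}$ for $\sigma>1$.

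The lower bound $u_h-u$ is proved by the mirror argument: mollify $u_h$ into $u_h^\varepsilon := u_h\ast\rho_\varepsilon$, rerun the identical decomposition with the discrete scheme \eqref{eqn:fraclap:aprrox} at $x-y$ playing the role of the pointwise equation, swap $(-\Delta_h)^{\sigma/2}$ for $(-\Delta)^{\sigma/2}$ via Lemma \ref{lem:trun_err_fraclap}, and invoke the comparison principle for the original equation (Proposition \ref{thm:viscosity_exist}(a)). The hard part will be the uniform-in-$h$ regularity of $u_h$ needed to bound $\|u_h-u_h^\varepsilon\|_0$ and $\|D^4 u_h^\varepsilon\|_0$ at the same rates as for $u$. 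For $\sigma\le 1$ this amounts to uniform discrete Lipschitz estimates, obtained by transferring the proof of Proposition \ref{thm:viscosity_exist}(c) to the scheme. For $\sigma>1$ it demands a discrete analogue of Theorem \ref{thm:regularity:vis_soln:wdeg:nonlocal}: a uniform bound $\|(-\Delta_h)^{\sigma/2}u_h\|_0 \le C$ obtained by mimicking the proof of Theorem \ref{thm:reg:wdeg:nonlocal} on the grid -- using weak non-degeneracy \ref{C1} and an auxiliary discrete operator of fractional-Laplacian type in the role of $\mathcal{J}^r$ -- followed by converting this bound into discrete Hölder-type mollification estimates. Once this discrete regularity is in hand, the balancing repeats verbatim.
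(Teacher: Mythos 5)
Your upper bound for $u-u_h$ and your treatment of $\sigma\le1$ coincide with the paper's proof (mollify $u$ with a standard symmetric mollifier, swap $(-\Delta)^{\frac\sigma2}$ for $(-\Delta_h)^{\frac\sigma2}$ via Lemma \ref{lem:trun_err_fraclap}, apply comparison for the scheme, and use Theorem \ref{thm:regularity:vis_soln:wdeg:nonlocal} with Lemma \ref{lem:dif:conv} to balance at $\epsilon=O(h^{1/2})$), and you correctly isolate the uniform bound $\|(-\Delta_h)^{\frac\sigma2}[u_h]\|_0\le K$ --- the paper's Theorem \ref{thm:frac-bound-wdeg-flap} --- as the discrete regularity input for the lower bound. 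The genuine gap is in the step you defer: ``converting this bound into discrete H\"older-type mollification estimates'' for the standard mollification $u_h*\rho_\varepsilon$. To reach the rate $h^{\sigma/2}$ you need $\|u_h-u_h*\rho_\varepsilon\|_0\lesssim \varepsilon^\sigma$ (up to tolerable $h$-corrections); plain Lipschitz regularity of $u_h$ only gives $O(\varepsilon)$, which caps the rate at $h^{1/2}$. The obstruction is structural: what the uniform discrete bound yields --- via the truncation error \eqref{fraclap_trunc} and the Ros-Oton--Serra estimate, as in the paper's Lemma \ref{lem:reg:numsol_reg:wdeg:flap} --- is $C^{1,\sigma-1}$ control of the \emph{mollified} functions $u_h*\rho_\gamma$ with constants of size $K(1+h^2\gamma^{-3})$, never of $u_h$ itself. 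Since standard mollifiers do not form a convolution semigroup, these scale-by-scale bounds cannot be telescoped into a bound on $u_h-u_h*\rho_\varepsilon$: any triangle-inequality iteration through scales $\gamma,\gamma/2,\dots$ doubles the unknown term at each step. (The derivative bounds $\|D^4(u_h*\rho_\varepsilon)\|_0\lesssim\varepsilon^{\sigma-4}(1+h^2\varepsilon^{-3})$, by contrast, \emph{can} be arranged for suitable mollifiers; the approximation estimate is the unique failure point.)

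This is exactly why the paper abandons $\rho_\varepsilon$ for the lower bound and regularizes with the fractional heat kernel, $u_h^{[\varepsilon]}=S_{\varepsilon^\sigma}(u_h)$ as in \eqref{veps2}. The semigroup identity $S_t u_h-u_h=-\int_0^t(-\Delta)^{\frac\sigma2}[S_r u_h]\,dr$, whose integrand is bounded by $K+Ch^2r^{-3/\sigma}$ (the discrete bound plus \eqref{fraclap_trunc} and Lemma \ref{lem:appndx_2}), combined with a splitting of the time integral at $s=h^{\frac{2\sigma}{4-\sigma}}$ and the Lipschitz bound $\|S_su_h-u_h\|_0\le Cs^{1/\sigma}$, produces the paper's Lemma \ref{thm:imprvdreg_bd:wdeg:flap}: $\|u_h^{[\varepsilon]}-u_h\|_0\le K\big(\varepsilon^\sigma+h^{\frac{2}{4-\sigma}}\big)$. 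This identity has no analogue for standard mollifiers; it is the missing idea, and the paper flags it explicitly in the introduction (``we cannot use standard mollifiers for regularization but crucially need fractional heat kernels''). Two smaller points: your claim that ``the balancing repeats verbatim'' overlooks the extra terms $h^{\frac{2}{4-\sigma}}$ and $h^4/\varepsilon^{7-\sigma}$ that the heat-kernel route produces --- both happen to be $O(h^{\sigma/2})$ at $\varepsilon=O(h^{1/2})$, since $\frac{2}{4-\sigma}\ge\frac\sigma2$ is equivalent to $(\sigma-2)^2\ge0$ --- and the alternative you hint at, a genuine uniform-in-$h$ discrete $C^{1,\sigma-1}$ or Zygmund estimate for $u_h$ on the grid, would amount to a discrete Schauder theory for powers of the discrete Laplacian that neither the paper nor your sketch provides.
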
 

\begin{rem}
The problem is weakly non-degenerate and the regularity of the solution can be seen in the rate for $\sigma>1$, cf. Theorem \ref{thm:regularity:vis_soln:wdeg:nonlocal}. This $\sigma$ dependence seems to be optimal, and is consistent as $\sigma \to 2$ with the  $\mathcal{O}(h)$ bound obtained in the 2nd order case in \cite{D-Krylov05}. For $\sigma\in(\frac43,2)$, the rate is better than for the diffusion corrected discretization in Theorem \ref{thm:er_bound:wdeg}. 
 \end{rem}

\section{Proofs of the error bounds for monotone quadrature schemes} 
\label{sec_prf_gen}
Here, we give proof of the  convergence rates discussed in Section \ref{monotone scheme}. 

\subsection{Strongly-degenerate equations -- the proof of Theorem
  \ref{thm:estimate_sym_main}}

Let $\big(\rho_{\epsilon}\big)_{\epsilon>0}$ be the standard mollifier on $\rn$ and define $u_{\epsilon,h}= u_h * \rho_{\epsilon}$. By \eqref{approx:eqn1}, 
\begin{align*}
    f^{\A}(x) + c^{\A}(x) u_h(x) -\mathcal{L}^{\A}_{\delta,k,h}\, u_h(x)  - \sum_{\bj \in \Z^N} \big(u_h(x +x_{\bj})-u_h(x)\big)
\kappa_{h,\bj}^{\A,\delta} \leq 0
\end{align*}
for any $\A \in \mathcal{A}$. Let $f^{\A}_\epsilon=f^{\A}*\rho_\epsilon$, convolve by $\rho_\epsilon$,  to get 
\begin{align*}
    f^{\A}_\epsilon(x) + (c^{\A} u_{h,\epsilon})*\rho_\epsilon(x) -\mathcal{L}^{\A}_{\delta,k,h}\, u_{h,\epsilon}(x)  - \sum_{\bj \in \Z^N} \big(u_{h,\epsilon}(x +x_{\bj})-u_{h,\epsilon}(x)\big)
\kappa_{h,\bj}^{\A,\delta} \leq 0.
\end{align*}
Since $\|f^{\A}*\rho_\epsilon -f^{\A}\|_0 \leq K \epsilon$ and $\|(c^\A u_h)* \rho_\epsilon - c^{\A}\, u_{h,\epsilon}\|_0  \leq \sup_{\A}\|Dc^{\A}\|_0 \|u_h\|_0 \, \epsilon \leq C K^2 \,\epsilon,$
we then find that
\begin{align}\label{prf_err1_new}
 & f^{\A}(x)  +c^{\A}(x) u_{\epsilon,h}(x)   - \I^{\A}[u_{\epsilon,h}](x) \notag\\ 
& \qquad \leq   \big\| \I^{\A}[u_{\epsilon,h}]  - \big(\mathcal{L}^{\A}_{\delta,k,h} \, u_{\epsilon,h} + \I^{\A,\delta}_h[u_{\epsilon,h}]\big)\big\|_0 + (CK^2 +K) \epsilon.
\end{align}
By Lemmas~\ref{lem:err_local_dif_sym_odd},~\ref{lem:local_trunc1},~\ref{lem:trunc_err2}, and
 $|D^ku_{\epsilon,h}|_0 \leq \frac{C\|u_h\|_{0,1}}{\epsilon^{k-1}}$, it follows that
\begin{align}\big| 
&\I^{\A}[u_{\epsilon,h}]  - \big(\mathcal{L}^{\A}_{\delta,k,h} \, u_{\epsilon,h} + \I^{\A,\delta}_h[u_{\epsilon,h}]\big)\big|_0  \nonumber\\
& \label{prf_err_M1}
  \leq M_{\epsilon,\delta} := C \Big(\delta^{4-\sigma} \, \frac{1}{\epsilon^3} 
  + k^2  \, \delta^{2(2-\sigma)} \, \frac{1}{\epsilon^3} + \frac{h^2}{k^2} \frac{1}{\epsilon} + \frac{h^2}{\delta^{\sigma}} \, \frac{1}{\epsilon}  \Big).
\end{align}
Therefore $u_{\epsilon,h}- \frac{C}{\lambda}\tilde{M}_{\epsilon, \delta}$, for $\tilde{M}_{\epsilon,\delta}= M_{\epsilon, \delta} + (CK^2+K)\epsilon$, is a classical (and hence also viscosity) subsolution of equation \eqref{eqn:main}.  By comparison for equation \eqref{eqn:main} (Proposition \ref{thm:viscosity_exist}~(a)),
$u_{\epsilon,h}- \frac{C}{\lambda} \, \tilde{M}_{\epsilon, \delta}\leq u.$
Since $\|u_h - u_{\epsilon,h}\|_0\leq  \epsilon \|Du_h\|_0$, we get
\begin{align}\label{prf:upperbd:gen1}
u_h-u \leq K\big(\epsilon + M_{\epsilon,\delta}\big).
\end{align} 

The bound on $u-u_h$ can be proved in similar way. Let $u_{\epsilon} = u* \rho_{\epsilon}$. Arguing as above, using Lemmas~\ref{lem:err_local_dif_sym_odd},~\ref{lem:local_trunc1}, ~\ref{lem:trunc_err2}, and $\|D^ku_{\epsilon}\|_0 \leq \frac{C\|u\|_{0,1}}{\epsilon^{k-1}}$, we have
 \begin{align*}
&f^{\A}(x)    + c^{\A}(x) \, u_{\epsilon}(x)-\mathcal{L}^{\A}_{\delta,k,h} \, u_{\epsilon}(x) - \I_h^{\A, \delta}[u_{\epsilon}](x) \\
&  \leq   \big\| \I^{\A}[u_{\epsilon}] - \big(\mathcal{L}^{\A}_{\delta,k,h} \, u_{\epsilon} + \I^{\A,\delta}_h[u_{\epsilon}]\big)\big\|_0 + (CK^2+K) \epsilon \leq  M_{\epsilon,\delta} + (CK^2+K) \epsilon.
\end{align*}
 This implies $u_{\epsilon}- \frac{C}{\lambda} \, \tilde{M}_{\epsilon,\delta} $ is a subsolution of the numerical scheme \eqref{approx:eqn1}.  Comparison for the scheme \eqref{approx:eqn1} (Theorem \ref{thm:epprx_exist1}(b)) and $\|u-u_{\epsilon}\|_0< \epsilon \|Du\|_0$  lead to
\begin{align}\label{prf_err1_5}
u_h-u \geq -C  (\epsilon +M_{\epsilon,\delta}).   
\end{align} 

By \eqref{prf:upperbd:gen1} and \eqref{prf_err1_5} we get 
$|u-u_h|\leq C (\epsilon + M_{\epsilon,\delta})$, and then we optimize with respect to $k$, $\delta$, and $\epsilon$. The optimal choices $k^2= O\big(\frac{h \epsilon}{\delta^{2-\sigma}})$ and $\epsilon = O\big(\frac{h}{\delta^{\frac{\sigma}{2}}}\big)$ lead to
\begin{align}\label{prf_err_M2}
 |u-u_h| \leq C \Big(\delta^{4+\frac{\sigma}{2}} h^{-3} 
 + \delta^2 h^{-1} + \frac{h}{\delta^{\frac{\sigma}{2}}} \Big),
\end{align}
and the result follows by choosing $\delta= O \big(h^{\frac{4}{4+\sigma}}\big)$. \hspace{1.1cm}$\square$

\subsection{Intermezzo on regularisations} \label{subsec:frac-heat-ker}

In the remaining proofs we need high order estimates for two different
regularisation procedures: (i) Convolution with standard mollifiers and
(ii) convolution with fractional heat kernels. These estimates are
proved in this section.

Let $\rho_{\varepsilon}(x)= 
\frac{1}{\varepsilon^{N}}\rho\big(\frac{x}{\varepsilon}\big)$ for some
$\rho\in C_c^\infty(\rn)$ with support in $B(0,1)$ and
$\int_{\rn}\rho\, dx =1$. Hence  $\mathrm{supp}\,\rho_{\epsilon}=\overline{B(0,\epsilon)}$  and $\int_{\rn}\rho_{\epsilon}\, dx
=1$. We define
\begin{align}\label{veps1}
  v^{(\epsilon)}= v * \rho_{\epsilon}
\end{align}  
for bounded continuous functions $v$. It then easily follows that
$v^{(\epsilon)}\in C_b^\infty$.

\begin{lem} \label{lem:dif:conv}
If $v \in C^{1, \beta}(\rn)$ for $\beta \in (0,1]$ and $\rho$ is a radial function, then 
$$\|v^{(\epsilon)} -v\|_0 \leq C \epsilon^{1+ \beta}\|v\|_{1,\beta}\qquad\text{and}\qquad \|D^mv^{(\epsilon)} \|_0 \leq
  \frac{K}{\epsilon^{m-1-\beta}}\|v\|_{1,\beta}$$
for any $m\geq 2$, where $C$ and $K$ are independent of $\epsilon$. 
\end{lem}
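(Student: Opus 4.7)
The plan is to prove the two estimates separately, exploiting two different cancellation properties of $\rho_{\epsilon}$: radial symmetry for the $L^\infty$ error and the vanishing of $\int D^{m-1}\rho_{\epsilon}$ for $m-1\ge 1$ for the higher derivative bound.

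For the first estimate, I would write
\begin{align*}
v^{(\epsilon)}(x)-v(x) = \int_{B(0,\epsilon)} \bigl(v(x-z)-v(x)\bigr)\rho_{\epsilon}(z)\,dz,
\end{align*}
and note that since $\rho$ is radial, $\rho_{\epsilon}(-z)=\rho_{\epsilon}(z)$, hence $\int_{\rn} z\,\rho_{\epsilon}(z)\,dz=0$. This lets me add the first-order term for free,
\begin{align*}
v^{(\epsilon)}(x)-v(x) = \int_{B(0,\epsilon)} \bigl(v(x-z)-v(x)+\grad v(x)\cdot z\bigr)\rho_{\epsilon}(z)\,dz.
\end{align*}
Because $v\in C^{1,\beta}$, the Taylor identity $v(x-z)-v(x)+\grad v(x)\cdot z = -\int_0^1 (\grad v(x-sz)-\grad v(x))\cdot z\,ds$ gives a pointwise bound by $C\|v\|_{1,\beta}|z|^{1+\beta}$. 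Integrating against $\rho_{\epsilon}$ (supported in $\overline{B(0,\epsilon)}$) yields $\|v^{(\epsilon)}-v\|_0 \le C\|v\|_{1,\beta}\epsilon^{1+\beta}$.

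For the derivative estimate with $m\ge 2$, I would distribute one derivative onto $v$ and the remaining $m-1$ onto $\rho_{\epsilon}$:
\begin{align*}
D^m v^{(\epsilon)}(x) = \int_{\rn} \grad v(x-z)\otimes D^{m-1}\rho_{\epsilon}(z)\,dz.
\end{align*}
Since $m-1\ge 1$ and $\rho_{\epsilon}\in C_c^\infty$, integration by parts gives $\int_{\rn} D^{m-1}\rho_{\epsilon}(z)\,dz=0$, so I may subtract $\grad v(x)$:
\begin{align*}
D^m v^{(\epsilon)}(x) = \int_{\rn}\bigl(\grad v(x-z)-\grad v(x)\bigr)\otimes D^{m-1}\rho_{\epsilon}(z)\,dz.
\end{align*}
Using $|\grad v(x-z)-\grad v(x)|\le \|v\|_{1,\beta}|z|^\beta$ on $B(0,\epsilon)$ together with the scalings $\|D^{m-1}\rho_{\epsilon}\|_0\le C\epsilon^{-(N+m-1)}$ and $|\supp \rho_{\epsilon}|\le C\epsilon^N$ yields
\begin{align*}
|D^m v^{(\epsilon)}(x)| \le K\|v\|_{1,\beta}\,\epsilon^{\beta-(m-1)},
\end{align*}
which is the desired bound.

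No step is a serious obstacle; the only delicate points are making sure the right cancellations are used (radial symmetry for the single first moment, integration by parts for moments of derivatives of order $\ge 1$) so that the full $C^{1,\beta}$ regularity of $v$ is exploited rather than only Lipschitz regularity. Without the radial assumption on $\rho$ the first estimate would degrade to $O(\epsilon)$, and without placing one derivative on $v$ the second estimate would degrade to the standard $\epsilon^{-m}$ scaling, so both improvements hinge on these cancellations.
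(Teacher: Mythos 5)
Your proof is correct and follows essentially the same route as the paper: both estimates rest on the same two cancellations, namely $\int z\,\rho_{\epsilon}(z)\,dz=0$ from the symmetry of $\rho$ for the first bound, and $\int D^{m-1}\rho_{\epsilon}=0$ after placing one derivative on $v$ and $m-1$ on $\rho_{\epsilon}$ for the second. The only cosmetic difference is that you bound the final integral by $\|D^{m-1}\rho_{\epsilon}\|_0$ times the support volume, whereas the paper uses the scaled $L^1$ norm of $D^{m-1}\rho_{\epsilon}$ directly; both give the same $\epsilon^{\beta-(m-1)}$ scaling.
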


\begin{proof}
The first inequality follows since $\int_{\rn}y\rho_\epsilon(y)\,dy=0$ and then
\begin{align*}
|v^{(\epsilon)}(x) -v(x)| &= \Big|\int_{\rn} (v(x-y) - v(x) - y\cdot \grad
v(x)) \rho_{\epsilon}(y) \, dy\Big|\\
&\leq C\|v\|_{1,\beta}\int_{\rn}|y|^{1+\beta} \rho_{\epsilon}(y) \,
dy\leq C\|v\|_{1,\beta}\epsilon^{1+\beta}. 
\end{align*}
Since $\int_{\rn}D^{m-1}\rho_{\epsilon}(y) dy =0$ by the divergence theorem,
the second inequality follows since $Dv\in C^\beta$ and
$$D^m v^{(\epsilon)}= Dv * D^{m-1}\rho_{\epsilon} = \int_{\rn}[Dv(x-y)
  - Dv(x)] D^{m-1}\rho_{\epsilon}(y) dy.$$
\end{proof}

Let $\tilde{K}^{\sigma}(t,x):= \mathcal{F}^{-1}\big(
e^{-t|\cdot|^{\sigma}}\big)(x)$ be the fractional heat kernel,
the fundamental solution of the fractional heat equation  $u_t + (-
\Delta)^{\frac{\sigma}{2}} u =0$. Convolution with
$\tilde{K}^{\sigma}$ defines a smooth
approximation of a bounded continuous function $v$, 
\begin{align}\label{veps2}
v^{[\epsilon]}(x) := v(\cdot) * \tilde{K}^{\sigma}(\epsilon^{\sigma},\cdot)(x). 
\end{align}

Let $K^{\sigma}(x)=
\tilde{K}^{\sigma}(1,x)$. To prove estimates on $v^{[\epsilon]}$, we need some well-known properties
of $\tilde{K}^{\sigma}$:
\begin{itemize}\smallskip
\item[(i)]  $\tilde{K}^{\sigma} \in C^{\infty}((0,\infty)\times
  \rn)$, $\tilde{K}^{\sigma}\geq0$, and $\int_{\rn}
  \tilde{K}^{\sigma}(t,x) \, dx =1$ for $t>0$.\smallskip
\item[(ii)] $\tilde{K}^{\sigma}(t+s,x)= \tilde{K}^{\sigma}(t,x) *
\tilde{K}^{\sigma}(s,x)$ for $t,s\geq0$ (convolution semigroup).\smallskip
\item[(iii)] For $t>0$ and $x\in\rn$, $\tilde{K}^{\sigma}(t,x) = t^{-\frac{N}{\sigma}} K^{\sigma}\Big(\dfrac{x}{t^{\frac{1}{\sigma}}}\Big) $ where
$$\frac{c_1 t}{\big( t^{\frac{2}{\sigma}}+  |x|^{2}\big)^{\frac{N+\sigma}{2}}} \leq \tilde{K}^{\sigma}(t,x) \leq \frac{C_2 t}{\big( t^{\frac{2}{\sigma}}+  |x|^{2}\big)^{\frac{N+\sigma}{2}}}.$$ 
\item[(iv)] (Theorem 1.1(c) in \cite{GRZYWNY17}) For any $m>0$ and multi-index $\beta$ with $|\beta|=m$, 
$$|D^{\beta} \, {K}^{\sigma}(x)| \leq  \frac{B_m}{1+
  |x|^{N+\sigma}}\quad\text{for\quad $x\in\rn$}. $$ 
\end{itemize}
We refer to \cite{bogdan10, Droniou03, GRZYWNY17} for the
proofs. 

\begin{lem}\label{lem:appndx_1}
Assume $\epsilon>0$, $\sigma>1$, $\beta \in (\sigma-1,1)$, and $v\in C^{1,
  \beta}(\rn)$. Then there is $C>0$ independent of $\epsilon$,
such that
$$\|v^{[\epsilon]} -v\|_0 \leq C \epsilon^{\sigma}.$$
\end{lem}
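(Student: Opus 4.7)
The plan is to exploit the radial symmetry of $\tilde K^\sigma$ (fact (iii) and the explicit form $e^{-t|\cdot|^\sigma}$ is radial) together with the $C^{1,\beta}$-regularity of $v$. Starting from the identity
\begin{equation*}
v^{[\epsilon]}(x) - v(x) = \int_{\rn}\bigl(v(x-y)-v(x)\bigr)\tilde K^\sigma(\epsilon^\sigma,y)\,dy,
\end{equation*}
I would first note that, since $\sigma>1$, property (iii) gives $\int_{\rn}|y|\,\tilde K^\sigma(\epsilon^\sigma,y)\,dy<\infty$, and radial symmetry then forces $\int_{\rn}y\,\tilde K^\sigma(\epsilon^\sigma,y)\,dy=0$. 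Hence I may freely insert the term $y\cdot\nabla v(x)$:
\begin{equation*}
v^{[\epsilon]}(x) - v(x) = \int_{\rn}\bigl(v(x-y)-v(x)+y\cdot\nabla v(x)\bigr)\tilde K^\sigma(\epsilon^\sigma,y)\,dy.
\end{equation*}

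Next I would split the integral at $|y|=1$. On $\{|y|<1\}$, the $C^{1,\beta}$ hypothesis gives the Taylor bound $|v(x-y)-v(x)+y\cdot\nabla v(x)|\le C\|v\|_{1,\beta}|y|^{1+\beta}$, and property (iii) yields $\tilde K^\sigma(\epsilon^\sigma,y)\le C\epsilon^\sigma(\epsilon^2+|y|^2)^{-(N+\sigma)/2}$. To extract the correct power of $\epsilon$, I would split once more at $|y|=\epsilon$: on $\{|y|<\epsilon\}$ the denominator is $\asymp\epsilon^{N+\sigma}$, giving a bound of order $\epsilon^{-N}\int_{|y|<\epsilon}|y|^{1+\beta}dy\asymp\epsilon^{1+\beta}$; on $\{\epsilon<|y|<1\}$ the denominator is $\asymp|y|^{N+\sigma}$, and the radial integral becomes $\epsilon^{\sigma}\int_\epsilon^1 r^{\beta-\sigma}\,dr$, which, because $\beta>\sigma-1$ gives $\beta-\sigma>-1$, is bounded by $C\epsilon^\sigma$ uniformly in $\epsilon$. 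Since $\epsilon<1$ and $1+\beta>\sigma$, both contributions are $O(\epsilon^\sigma)$.

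On $\{|y|>1\}$, I would drop the compensator (using again that $\int_{|y|>1}y\,\tilde K^\sigma\,dy=0$ by radial symmetry) and bound $|v(x-y)-v(x)|\le 2\|v\|_0$. The tail estimate from (iii) gives
\begin{equation*}
\int_{|y|>1}\tilde K^\sigma(\epsilon^\sigma,y)\,dy \le C\epsilon^\sigma\int_{|y|>1}\frac{dy}{|y|^{N+\sigma}}\le C\epsilon^\sigma,
\end{equation*}
completing the estimate. The main subtlety to watch is the near-field scaling: one might be tempted to bound everything by a single moment $\int|y|^{1+\beta}\tilde K^\sigma\,dy$, but since $1+\beta>\sigma$ that moment is infinite at $t=1$, so the $|y|=1$ splitting (using $\|v\|_0$ in the tail) is essential and is the step that ties the assumption $\beta>\sigma-1$ together with the desired rate $\epsilon^\sigma$.
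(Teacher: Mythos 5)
Your proof is correct, but it takes a genuinely different route from the paper's. The paper argues on the semigroup side: writing $v^{[\epsilon]}=S_{\epsilon^\sigma}(v)$, it uses Fubini and the kernel's mass to get $\|(-\Delta)^{\frac{\sigma}{2}}[S_r(v)]\|_0\le\|(-\Delta)^{\frac{\sigma}{2}}[v]\|_0\le K\|v\|_{1,\beta}$ (that last inequality being where $\beta>\sigma-1$ enters), and then integrates the fractional heat equation in time, $|S_t(v)-S_s(v)|=\big|\int_s^t(-\Delta)^{\frac{\sigma}{2}}[S_r(v)]\,dr\big|\le K(t-s)\|v\|_{1,\beta}$, finishing with $t=\epsilon^\sigma$ and $s\to0$. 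You instead work entirely on the kernel side: radial symmetry of $\tilde K^\sigma$ to insert the compensator $y\cdot\nabla v(x)$ (legitimate, since the first moment is finite for $\sigma>1$), the two-sided bound of property (iii), and the three-zone split $\{|y|<\epsilon\}$, $\{\epsilon<|y|<1\}$, $\{|y|>1\}$, which gives $\epsilon^{1+\beta}+\epsilon^\sigma\int_\epsilon^1 r^{\beta-\sigma}\,dr+\epsilon^\sigma\lesssim\epsilon^\sigma$. In effect your near/far decomposition is an unrolled proof of the very estimate $\|(-\Delta)^{\frac{\sigma}{2}}[v]\|_0\le K\|v\|_{1,\beta}$ that the paper invokes without proof; in both arguments the hypothesis $\beta>\sigma-1$ plays the identical role of making $\int_0^1 r^{\beta-\sigma}\,dr$ converge. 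What each buys: the paper's argument is shorter and yields the stronger structural fact that $t\mapsto S_t(v)$ is Lipschitz into $L^\infty$ uniformly on $[0,\infty)$ (which is reused elsewhere, e.g.\ in Lemma \ref{thm:regularization_bd:wdeg:nonlocal}), but it tacitly requires differentiating $r\mapsto S_r(v)$ under the sup norm; yours is self-contained at the level of properties (i) and (iii) of the kernel, at the cost of more bookkeeping. Your closing remark --- that a single moment bound via $\int|y|^{1+\beta}\tilde K^\sigma\,dy$ fails because $1+\beta>\sigma$ makes that moment infinite --- is exactly right, and identifies why the $|y|=1$ split with the $\|v\|_0$ tail is unavoidable. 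The only cosmetic omission is the regime $\epsilon\ge1$, where $\epsilon^{1+\beta}\le\epsilon^\sigma$ fails but the claim is trivial from $\|v^{[\epsilon]}-v\|_0\le2\|v\|_0\le 2\|v\|_0\,\epsilon^\sigma$.
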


\begin{proof}
Let $S_t$ be the fractional heat semigroup,
i.e. $v^{[\epsilon]} = S_{\epsilon^\sigma}(v)$.
Since $\int_{\rn} \tilde{K}^{\sigma}(r,y) dy = 1 $, by Fubini's
Theorem and property (i) above,
\begin{align*}
|(-\Delta)^{\frac{\sigma}{2}}[S_r(v)](x)| & =\Big| \int_{\rn}\int_{\rn} \frac{v(x+z-y) -v(x-y)}{|z|^{N+\sigma}} \tilde{K}^{\sigma}(r,y) dx dy\Big|  \\
& \leq  \int_{\rn}  |(-\Delta)^{\frac{\sigma}{2}}[v](x-y)| \tilde{K}^{\sigma}(r,y) dy \leq \|(-\Delta)^{\frac{\sigma}{2}}[v]\|_0.
\end{align*}
Since $\|(-\Delta)^{\frac{\sigma}{2}}[v]\|_0\leq K\|v\|_{1,\beta}$, for any $t\geq s>0$,
$$|S_t(v) - S_s(v)|=\Big| \int_s^t \partial_r[S_r(v)]dr\Big|=\Big|\int_s^t (-\Delta)^{\frac{\sigma}{2}}[S_r(v)] dr \Big| \leq K(t-s) \|v\|_{1,\beta}.$$ 
The lemma then follows by taking $t= \epsilon^{\sigma}$ and using that
$S_s(v)\to v$ pointwise as $s\to 0$.
\end{proof}
\begin{lem}\label{lem:appndx_2}
Assume $\epsilon>0$, $\sigma>1$, $m\geq 2$, $v\in C^{0, 1}(\rn)$, and define
$\epsilon_1=\frac{\epsilon}{2^{\frac{1}{\sigma}}}$. Then there exists $C>0$ independent of $\epsilon$ such that  
$$\|D^m v^{[\epsilon]}\|_0 \leq \frac{C}{\epsilon^{m-1}}\|v\|_{0,1}\quad \mbox{and} \quad \|D^m v^{[\epsilon]}\|_0 \leq \frac{C}{\epsilon^{m-\sigma}}\|v^{[\epsilon_1]}\|_{1,\sigma-1}.$$
\end{lem}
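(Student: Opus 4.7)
The plan is to exploit the scaling $\tilde K^\sigma(t,x) = t^{-N/\sigma} K^\sigma(x/t^{1/\sigma})$ together with the pointwise decay estimate (iv) for $D^\beta K^\sigma$. Throughout, I will use that $\int_{\rn} D^\beta \tilde K^\sigma(t,y)\,dy = 0$ for any multi-index with $|\beta|\geq 1$, which follows from the divergence theorem and the rapid decay of the kernel.

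For the first bound, write
\begin{align*}
D^m v^{[\epsilon]}(x) = \int_{\rn} v(x-y)\,D^m\tilde K^\sigma(\epsilon^\sigma,y)\,dy = \int_{\rn}[v(x-y)-v(x)]\,D^m\tilde K^\sigma(\epsilon^\sigma,y)\,dy,
\end{align*}
where the second equality uses $\int D^m\tilde K^\sigma\,dy=0$. Lipschitz continuity gives $|v(x-y)-v(x)|\leq\|v\|_{0,1}|y|$, and the substitution $y=\epsilon z$ together with $D^m\tilde K^\sigma(\epsilon^\sigma,y)=\epsilon^{-N-m}D^m K^\sigma(y/\epsilon)$ reduces the integral to
\begin{align*}
\epsilon^{1-m}\int_{\rn}|z|\,|D^m K^\sigma(z)|\,dz.
\end{align*}
By property (iv), $|D^m K^\sigma(z)|\leq B_m(1+|z|^{N+\sigma})^{-1}$, so the integrand is $\lesssim |z|$ near the origin and $\lesssim |z|^{-N-\sigma+1}$ at infinity, hence integrable (for $\sigma>0$). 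This yields $\|D^m v^{[\epsilon]}\|_0\leq C\epsilon^{1-m}\|v\|_{0,1}$.

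For the second bound, the key observation is the convolution semigroup property (ii): since $\epsilon_1^\sigma=\epsilon^\sigma/2$, we have
\begin{align*}
\tilde K^\sigma(\epsilon^\sigma,\cdot)=\tilde K^\sigma(\epsilon_1^\sigma,\cdot)*\tilde K^\sigma(\epsilon_1^\sigma,\cdot),\qquad\text{so}\qquad v^{[\epsilon]}=v^{[\epsilon_1]}*\tilde K^\sigma(\epsilon_1^\sigma,\cdot).
\end{align*}
Distribute one derivative onto $v^{[\epsilon_1]}$ and the remaining $m-1$ onto the kernel (permissible since $m\geq 2$):
\begin{align*}
D^m v^{[\epsilon]}(x) = \int_{\rn}[Dv^{[\epsilon_1]}(x-y)-Dv^{[\epsilon_1]}(x)]\,D^{m-1}\tilde K^\sigma(\epsilon_1^\sigma,y)\,dy,
\end{align*}
again using the vanishing integral of $D^{m-1}\tilde K^\sigma$ (here $m-1\geq 1$). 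Apply the $C^{\sigma-1}$-bound on $Dv^{[\epsilon_1]}$ and rescale $y=\epsilon_1 z$ as before; the resulting factor is $\epsilon_1^{\sigma-m}\int|z|^{\sigma-1}|D^{m-1}K^\sigma(z)|\,dz$, and this integral converges by property (iv) (the integrand is $\lesssim|z|^{\sigma-1}$ near $0$ and $\lesssim|z|^{-1-N}$ at infinity). Since $\epsilon_1\asymp\epsilon$, we obtain $\|D^m v^{[\epsilon]}\|_0\leq C\epsilon^{\sigma-m}\|v^{[\epsilon_1]}\|_{1,\sigma-1}$.

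The main obstacle here is conceptual rather than technical: one must realise that to gain the extra $\sigma-1$ powers of $\epsilon$ relative to the Lipschitz bound, one cannot regularise $v$ directly but should instead split the heat kernel via the semigroup property, absorbing the Hölder regularity of $Dv^{[\epsilon_1]}$ that was produced by the first convolution. The rest is routine rescaling and integrability checks that follow from the decay estimate (iv).
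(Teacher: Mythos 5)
Your proof is correct and takes essentially the same route as the paper's: for the first bound you put all $m$ derivatives on the rescaled kernel and use the vanishing moment $\int D^{m}\tilde K^{\sigma}\,dy=0$ together with Lipschitz continuity of $v$, and for the second you use exactly the paper's semigroup splitting $v^{[\epsilon]}=v^{[\epsilon_1]}*\tilde K^{\sigma}(\epsilon^{\sigma}/2,\cdot)$ with one derivative absorbed by the $C^{\sigma-1}$ seminorm of $Dv^{[\epsilon_1]}$ and $m-1$ derivatives on the kernel. One small slip in a parenthetical: the integral $\int_{\rn}|z|\,|D^{m}K^{\sigma}(z)|\,dz$ converges only for $\sigma>1$, not for all $\sigma>0$ as you claim (at infinity the integrand is of order $|z|^{1-N-\sigma}$, giving $\int_1^\infty r^{-\sigma}\,dr$)---this is harmless since $\sigma>1$ is assumed, and it is precisely why the paper notes that $|x|\,\big|D^{m}_{x}\tilde K^{\sigma}(t,x)\big|\in L^{1}(\rn)$ holds for $\sigma>1$.
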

\begin{proof}
The first estimate is classical and follows from differentiating
$\tilde{K}^{\sigma}$ $(m-1)$ times and $v$ once (c.f. Lemma
\ref{lem:dif:conv}) and noting that
$|x|\big|D^m_x\tilde{K}^{\sigma}(t,x)\big| \in L^{1}(\rn)$ for
$\sigma>1$ and $t>0$ by property (iv) above.

For the second
estimate, we must estimate $\partial_{x_i}D^\alpha v^{[\epsilon]}$ for any
multiindex $\alpha$ with $|\alpha|=m-1$. Rewriting $v^{[\epsilon]}$ as
$$v^{[\epsilon]} = v* \tilde{K}^{\sigma}(\epsilon^{\sigma},\cdot) =v * \tilde{K}^{\sigma}\big(\frac{\epsilon^{\sigma}}{2},\cdot\big) * \tilde{K}^{\sigma}\big(\frac{\epsilon^{\sigma}}{2},\cdot\big)= v^{[\epsilon_1]}*\tilde{K}^{\sigma}\big(\frac{\epsilon^{\sigma}}{2},x\big),$$ 
we find that
$$\partial_{x_i}D^\alpha v^{[\epsilon]} = \partial_{x_i} v^{[\epsilon_1]}*\, D^\alpha \tilde{K}^{\sigma}\big(\frac{\epsilon^{\sigma}}{2},\cdot\big). $$
First, by the divergence theorem and the decay at infinity (property (iv) above), $\int_{\rn}
 D^\alpha\tilde{K}^{\sigma}\big(\frac{\epsilon^{\sigma}}{2},y\big) dy
 =0$. Then, by self-similarity (property (iii)) and $y = \frac\epsilon{2^{\frac1\sigma}} z$,
 $$(D^\alpha_y
 \tilde K)\Big(\frac{\epsilon^{\sigma}}{2},y\Big)= 2^{\frac N\sigma}\frac1{\epsilon^{N+(m-1)}}(D^\alpha_z K)(z).$$
Combining these facts with the change of variables $y = \epsilon z$, we see that
\begin{align*}
|\partial_{x_i}D^\alpha v^{[\epsilon]}|& = \Big|\int_{\rn} \big( \partial_{x_i} v^{[\epsilon_1]}(x-y) - \partial_{x_i} v^{[\epsilon_1]}(x) \big) D^\alpha_y \tilde{K}^{\sigma}\Big(\frac{\epsilon^{\sigma}}{2},y\Big) \, dy \Big| \\
& \leq \frac{2^{\frac N\sigma}}{\epsilon^{m-1}} \int_{\rn} \big|\partial_{x_i} v^{[\epsilon_1]}(x-\epsilon z) - \partial_{x_i} v^{[\epsilon_1]}(x)\big| \left|D^\alpha_z K^{\sigma}(z)\right| \, dz \\
& \leq \frac{K}{\epsilon^{m-\sigma}} \|v^{[\epsilon_1]}\|_{1,\sigma-1} \int_{\rn} |z|^{\sigma-1} \left|D^\alpha_z K^{\sigma}(z)\right| \, dz.
\end{align*}
The proof is complete since 
$|x|^{\sigma-1} \left|D^\alpha K^{\sigma}(x)\right| \in
L^{1}$ by property (iv) above.
\end{proof}

\subsection{Weakly-degenerate equations -- the proof of Theorem
  \ref{thm:er_bound:wdeg}}

We first prove a discrete version of the bound on
nonlocal operator in Theorem \ref{thm:reg:wdeg:nonlocal}. Then we
show that these bounds leads to regularity of the numerical solution. From regularity,
approximation, and comparison arguments the error bounds
follows. Regularization arguments and the results of the previous section 
are used throughout. For $h,k,\epsilon>0$, and $\delta\in(0,1)$,  we define  
\begin{align*}
&\hat\I^{\A}_{\delta,k,h}[\phi] := \mathcal{L}^{\A}_{\delta,k,h}[\phi] +
\I^{\A,\delta}_h [\phi],\\
&\J^{\A,\delta}_h[\phi] := \sum_{\bj \in \zn}\big(\phi(x+ x_{\bj}) -\phi(x)\big) \int_{|z|>\delta} \omega_{\bj}(\eta^{\A}(z)) \frac{dz}{|z|^{N+\sigma}},
\end{align*}
where $\mathcal{L}^{\A}_{\delta,k,h}$,
$\I^{\A,\delta}_h$, and the weight function $\omega_{\bj}$ are defined in section \ref{subsec:discr}. \ee
By
definition $\J^{\A,\delta}_h$ is a monotone approximation of the
non-singular part of the operator \begin{align}\label{calligraphy-J}
    \J^{\A}[\phi]:= \int_{\rn}
\big(\phi(x+\eta^{\A}(z)) -\phi(x) - \grad \phi(x)\cdot
\eta^{\A}(z)1_{|z|<\delta}\ee\big) \frac{dz}{|z|^{N+\sigma}}
\end{align}
with local truncation error (Taylor expand, see e.g. \cite[Section 3]{BCJ1})
\begin{align}\label{truncbd_nonlocal_wosing}
|\J^{\A,\delta}_h[\phi](x) - \J^{\A}[\phi](x)| \leq C (\be\|\phi\|_0 + \|D^2 \phi\|_0) \big( \delta^{2-\sigma}+ h^{2}\delta^{-\sigma} \big).
\end{align}  
The discrete
version of Theorem \ref{thm:reg:wdeg:nonlocal} is the following result.
\begin{thm}\label{thm:reg:approx:wdeg:nonlocal}
Assume \ref{A1}-\ref{A5}, \ref{C1}-\ref{C3}, and $u_{h}$ solves \eqref{eqn:approx:wdeg:nonlocal}. Then for $\delta\in(0,1)$, $\delta\geq h$, and $k\geq \delta^{\frac{\sigma}{2}}$, there is a $K>0$ independent of $h,k,\delta$ such that
\begin{align}\label{regbnd:approx:aux:nonlocal}
&\|\, \hat\I^{\A_0}_{\delta,k,h} [u_{h}]\, \|_0 \leq K, \\
\label{regbnd:approx:aux:frac_nonlocal}
&\|\,  \J^{\A_0,\delta}_h [u_{h}]\, \|_0 \leq \frac{K}{c_{ \A_0}}.
\end{align}
\end{thm}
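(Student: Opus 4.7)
The plan is to mirror the proof of Theorem \ref{thm:reg:wdeg:nonlocal} step by step at the discrete level. The structural observation that makes this feasible is that the scheme operators $\hat{\I}^{\A}_{\delta,k,h}$ and $\J^{\A_0,\delta}_h$ are convolution-type linear operators with translation-invariant non-negative weights; consequently, any two of them commute, and each is monotone in the sense required by the discrete comparison principle (Theorem \ref{thm:epprx_exist1}(b)). This replaces Fubini in the continuous proof by discrete interchange of sums.

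First I would establish the bound \eqref{regbnd:approx:aux:frac_nonlocal} on $\J^{\A_0,\delta}_h[u_h]$, adapting both parts of Step~1 of Theorem~\ref{thm:reg:wdeg:nonlocal}. For $x\in\rn$ and $\epsilon>0$, I pick a near-optimal $\bar{\A}=\bar{\A}(x,\epsilon)$ in the sup in \eqref{eqn:approx:wdeg:nonlocal} and combine the resulting sub-type inequality at $x$ with the super-type inequality at each shifted point $x+x_{\bj}$. Subtracting, multiplying by the non-negative weight of $\J^{\A_0,\delta}_h$, and summing over $\bj$ produces, thanks to commutativity $\J^{\A_0,\delta}_h\circ\hat{\I}^{\bar{\A}}_{\delta,k,h}=\hat{\I}^{\bar{\A}}_{\delta,k,h}\circ\J^{\A_0,\delta}_h$, a discrete supersolution-type inequality for $w_h:=-\J^{\A_0,\delta}_h[u_h]$ with forcing $-\J^{\A_0,\delta}_h[f^{\bar{\A}}]$. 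Letting $\epsilon\to 0$ and invoking comparison against constant sub/supersolutions yields the one-sided bound $w_h\geq -C/\lambda$. The reverse estimate uses weak degeneracy \ref{C1}: the weight difference $\kappa^{\A_0,\delta,1}_{h,\bj}-c_{\A_0}\tilde{\kappa}^{\A_0,\delta,1}_{h,\bj}$ is non-negative on $\delta<|z|<1$ and thus defines a monotone probe operator $B^{\delta}_h=\I^{\A_0,\delta,1}_h-c_{\A_0}\J^{\A_0,\delta,1}_h$. Running the same argument with $B^{\delta}_h$ gives $B^{\delta}_h[u_h]\leq C'/\lambda$, which combined with the scheme-derived upper bound on $-\I^{\A_0,\delta,1}_h[u_h]$ (obtained from the sup in the scheme together with the standard tail splitting $\int_{|z|>1}\nu_{\A_0}(dz)\leq K$) delivers $w_h\leq K$.

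Next I would turn to \eqref{regbnd:approx:aux:nonlocal}. The lower bound $\hat{\I}^{\A_0}_{\delta,k,h}[u_h]\geq -K$ is immediate: taking $\A=\A_0$ in the inf form of the scheme gives $\hat{\I}^{\A_0}_{\delta,k,h}[u_h]\geq f^{\A_0}+\lambda u_h$, bounded by \ref{A2} and Theorem~\ref{thm:epprx_exist1}(c). For the upper bound I run the probe argument again, but with $\hat{\I}^{\A_0}_{\delta,k,h}$ itself playing the role of the probe operator; commutativity of $\hat{\I}^{\A_0}_{\delta,k,h}$ with $\hat{\I}^{\bar{\A}}_{\delta,k,h}$ is again decisive, and after sending $\epsilon\to 0$ the output is a subsolution inequality for $v_h:=\hat{\I}^{\A_0}_{\delta,k,h}[u_h]$ of the form $\lambda v_h-\sup_\A \hat{\I}^{\A}_{\delta,k,h}[v_h]\leq C''$. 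A near-maximum argument (at a near-max $x^*$, each $\hat{\I}^{\A}_{\delta,k,h}[v_h](x^*)$ is non-positive in the limit, so the sup is too) closes the bound.

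The main technical obstacle, and what really separates the discrete proof from its continuous counterpart, is to show that $\sup_{\A}\|\J^{\A_0,\delta}_h[f^{\A}]\|_0$ and $\sup_{\A}\|\hat{\I}^{\A_0}_{\delta,k,h}[f^{\A}]\|_0$ are bounded uniformly in $h,k,\delta$. Since \ref{C3} only provides $f^{\A}\in C^{1,\beta}$ with $\beta>(\sigma-1)^+$, the consistency estimates of Lemmas~\ref{lem:err_local_dif_sym_odd}, \ref{lem:local_trunc1}, \ref{lem:trunc_err2} and \eqref{truncbd_nonlocal_wosing}, which demand $D^4$-bounds, cannot be invoked directly on $f^{\A}$. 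I would resolve this by smoothing via the fractional heat kernel, $f^{\A,[\epsilon]}=f^{\A}\ast \tilde{K}^{\sigma}(\epsilon^{\sigma},\cdot)$, and splitting $f^{\A}=(f^{\A}-f^{\A,[\epsilon]})+f^{\A,[\epsilon]}$. The tail is controlled by $\|f^{\A}-f^{\A,[\epsilon]}\|_0\lesssim\epsilon^{\sigma}$ (Lemma~\ref{lem:appndx_1}) amplified by the $O(\delta^{-\sigma})$ operator norm, while the consistency error for the smooth part is bounded using the derivative estimates $\|D^m f^{\A,[\epsilon]}\|_0\lesssim \epsilon^{\sigma-m}$ from Lemma~\ref{lem:appndx_2}. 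The hypotheses $\delta\geq h$ and $k\geq\delta^{\sigma/2}$ are exactly the thresholds that keep all these terms uniformly bounded after optimizing in $\epsilon$; carrying out that balance cleanly is the technically most delicate part of the argument.
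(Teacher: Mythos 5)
Your overall strategy is the paper's: a discrete probe/comparison argument mirroring Theorem \ref{thm:reg:wdeg:nonlocal}, with commutativity of the grid-shift operators replacing Fubini, the weak-degeneracy kernel difference as a second monotone probe, and uniform bounds on the forcing terms as the genuinely discrete difficulty where the hypotheses $\delta\geq h$, $k\geq \delta^{\frac{\sigma}{2}}$ enter. Your commutativity observation is sound (the scheme operators commute with grid translations, which is all the probe needs), your near-maximum closing is a valid substitute for invoking comparison directly, and your smoothing computation for $\hat\I^{\A_0}_{\delta,k,h}[f^{\A}]$ is essentially the paper's Lemma \ref{lem:regularize_f} (the paper uses a standard mollifier via Lemma \ref{lem:dif:conv} rather than the fractional heat kernel, and it bounds the purely nonlocal forcings $\I^{\A_0,\delta}_h[f^{\A}]$ and $\J^{\A_0,\delta}_h[f^{\A}]$ directly from $f^{\A}\in C^{1,\beta}$, $\beta>\sigma-1$, with symmetry killing the compensator -- no smoothing is needed there; smoothing is needed only against the $k^{-2}$ amplification in $\mathcal{L}^{\A_0}_{\delta,k,h}$).

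There is, however, one genuine gap. In your $\J$-step you claim the upper bound on $-\I^{\A_0,\delta,1}_h[u_h]$ is ``scheme-derived \ldots\ from the sup in the scheme together with the standard tail splitting.'' That step is valid in the continuous proof (estimate \eqref{prf_err_M4}) because equation \eqref{eqn:aux:wdeg:nonlocal} contains only $\I^{\A,r}$; but the scheme \eqref{eqn:approx:wdeg:nonlocal} contains $\mathcal{L}^{\A}_{\delta,k,h}+\I^{\A,\delta}_h$, so the sup in the scheme only controls the combination $-\mathcal{L}^{\A_0}_{\delta,k,h}[u_h]-\I^{\A_0,\delta}_h[u_h]\leq \lambda\|u_h\|_0+\sup_{\A}\|f^{\A}\|_0$. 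To peel off $-\I^{\A_0,\delta}_h[u_h]$ you additionally need the one-sided bound $\mathcal{L}^{\A_0}_{\delta,k,h}[u_h]\leq C$, and this is not free: it comes from the probe/comparison argument with forcing $\mathcal{L}^{\A_0}_{\delta,k,h}[f^{\A}]$, whose uniform boundedness is exactly Lemma \ref{lem:regularize_f} together with $k\geq C\delta^{\frac{\sigma}{2}}$ and $\delta\geq h$. Consequently your proposed order -- \eqref{regbnd:approx:aux:frac_nonlocal} first, \eqref{regbnd:approx:aux:nonlocal} second -- is circular as written: the lower bound on $\J^{\A_0,\delta}_h[u_h]$ depends on the componentwise bounds on $\I^{\A_0,\delta}_h[u_h]$ and $\mathcal{L}^{\A_0}_{\delta,k,h}[u_h]$, which is why the paper proves part (i) first and explicitly feeds it into part (ii). Since your final paragraph already contains the needed forcing estimate, the repair is a rewiring rather than a new idea: prove the two component bounds (and hence \eqref{regbnd:approx:aux:nonlocal}) first, then run the weak-degeneracy probe and use $-\I^{\A_0,\delta}_h[u_h]\leq C+\mathcal{L}^{\A_0}_{\delta,k,h}[u_h]\leq C'$ to close \eqref{regbnd:approx:aux:frac_nonlocal}.
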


The proof relies on the following technical lemma.

\begin{lem}\label{lem:regularize_f}
Assume \ref{A1}-\ref{A6}, \ref{C1}-\ref{C3}, and $\A_0$ is defined in \ref{C1}. For $\sigma\in(0,1)$, there is a $K>0$ independent of $\delta,h,k$ such that 
\begin{align*}
\|\mathcal{L}^{\A_0}_{\delta,k,h}[f^{\A}]\|_0 \leq K \Big[ \frac{h^\sigma}{k^2}+ k^{\sigma-2} \delta^{\frac{\sigma(2-\sigma)}{2}}\Big] \|f^{\A}\|_{1, \sigma-1}.
\end{align*}
\end{lem}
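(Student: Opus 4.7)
The plan is to split
\[
\mathcal{L}^{\A_0}_{\delta,k,h}[f^{\A}] \;=\; \bigl(\mathcal{L}^{\A_0}_{\delta,k,h} - \mathcal{D}^{\A_0}_{\delta,k}\bigr)[f^{\A}] \;+\; \mathcal{D}^{\A_0}_{\delta,k}[f^{\A}],
\]
and to bound each piece separately, producing respectively the $h^{\sigma}/k^{2}$ and the $k^{\sigma-2}\delta^{\sigma(2-\sigma)/2}$ contributions. The common thread is that both $i_h$ (multilinear interpolation) and the symmetric central-difference stencil in \eqref{SL_approx_term} \emph{reproduce affine functions exactly}: after subtracting the first-order Taylor polynomial of $f^{\A}$, only $C^{0,\sigma-1}$-increments of $\grad f^{\A}$ remain, and this is where the fractional exponent $\sigma$ in the final bound comes from.

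For the central-difference piece I would write, with $v_i := (\sqrt{a^{\A_0}_\delta})_i$,
\[
f^{\A}(x\pm kv_i) - f^{\A}(x) \mp \grad f^{\A}(x)\cdot kv_i \;=\; \int_0^1\!\bigl(\grad f^{\A}(x\pm tkv_i) - \grad f^{\A}(x)\bigr)\cdot(\pm kv_i)\,dt,
\]
and add the two expressions so that the linear term cancels. Using the $(\sigma-1)$-Hölder seminorm of $\grad f^{\A}$ gives the pointwise bound $|f^{\A}(x+kv_i)+f^{\A}(x-kv_i)-2f^{\A}(x)| \le C(k|v_i|)^{\sigma}\|f^{\A}\|_{1,\sigma-1}$. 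Combining with Lemma~\ref{lem:err_local_dif_sym_odd}(ii), which gives $|v_i|^2 \le |a^{\A_0}_\delta| \le K\delta^{2-\sigma}$, and dividing by $2k^2$ yields $|\mathcal{D}^{\A_0}_{\delta,k}[f^{\A}]|_0 \le Ck^{\sigma-2}\delta^{\sigma(2-\sigma)/2}\|f^{\A}\|_{1,\sigma-1}$.

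For the interpolation piece, the same telescoping shows
\[
\mathcal{L}^{\A_0}_{\delta,k,h}[f^{\A}](x)-\mathcal{D}^{\A_0}_{\delta,k}[f^{\A}](x) \;=\; \sum_{i=1}^{N}\frac{\bigl(i_h[f^{\A}]-f^{\A}\bigr)(x+kv_i)+\bigl(i_h[f^{\A}]-f^{\A}\bigr)(x-kv_i)}{2k^2}.
\]
Since $i_h$ preserves affine functions, $(i_h[f^{\A}]-f^{\A})(y) = i_h[f^{\A}-L_y](y)$ for $L_y(z):=f^{\A}(y)+\grad f^{\A}(y)\cdot(z-y)$. Only grid points $x_{\bj}$ with $\omega_{\bj}(y)>0$ contribute, and these lie within distance $O(h)$ of $y$, so $|(f^{\A}-L_y)(x_{\bj})| \le C h^{\sigma}\|f^{\A}\|_{1,\sigma-1}$; summing against $\omega_{\bj}(y)\ge 0$ with $\sum_{\bj}\omega_{\bj}(y)=1$ gives $|(i_h[f^{\A}]-f^{\A})(y)| \le Ch^{\sigma}\|f^{\A}\|_{1,\sigma-1}$ uniformly in $y$. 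Dividing by $2k^2$ produces the first term in the stated bound, and adding the two estimates completes the proof. The main subtlety is to resist using any $\|D^2 f^{\A}\|_0$-type bound: $f^{\A}$ is merely $C^{1,\sigma-1}$, and it is essential to exploit the affine-exactness of \emph{both} $i_h$ and the symmetric stencil to convert the naive $|y|$ factor into the fractional $|y|^{\sigma}$ required.
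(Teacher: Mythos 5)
Your proof is correct, but it takes a genuinely different route from the paper. The paper never works directly with the $C^{1,\sigma-1}$ function: it mollifies, setting $f^{\A}_{(\gamma)}=f^{\A}*\rho_\gamma$, bounds $\mathcal{L}^{\A_0}_{\delta,k,h}[f^{\A}_{(\gamma)}]$ through the smooth-function truncation estimates \eqref{SL_approx_term}--\eqref{SL_bd_interpolant} together with the interpolation bounds $\|D^mf^{\A}_{(\gamma)}\|_0\leq C\gamma^{\sigma-m}\|f^{\A}\|_{1,\sigma-1}$ and $\|f^{\A}-f^{\A}_{(\gamma)}\|_0\leq C\gamma^\sigma\|f^{\A}\|_{1,\sigma-1}$ of Lemma \ref{lem:dif:conv}, estimates the commutator crudely by $\|\mathcal{L}^{\A_0}_{\delta,k,h}[f^{\A}_{(\gamma)}-f^{\A}]\|_0\leq K\gamma^{\sigma}k^{-2}\|f^{\A}\|_{1,\sigma-1}$, and finally optimizes the free parameter by taking $\gamma=\max\{h,\,k\delta^{\frac{2-\sigma}{2}}\}$, which reproduces exactly your two terms. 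You instead exploit affine-exactness of both the symmetric stencil and the multilinear interpolant to estimate second differences and interpolation error of $f^{\A}$ itself by H\"older increments of $\grad f^{\A}$; your two pointwise bounds, $|f^{\A}(x+kv_i)+f^{\A}(x-kv_i)-2f^{\A}(x)|\leq C(k|v_i|)^\sigma\|f^{\A}\|_{1,\sigma-1}$ with $|v_i|\leq K\delta^{\frac{2-\sigma}{2}}$ from \eqref{a-bnd}, and $\|i_h[f^{\A}]-f^{\A}\|_0\leq Ch^\sigma\|f^{\A}\|_{1,\sigma-1}$, are both sound, and the splitting $\mathcal{L}^{\A_0}_{\delta,k,h}=(\mathcal{L}^{\A_0}_{\delta,k,h}-\mathcal{D}^{\A_0}_{\delta,k})+\mathcal{D}^{\A_0}_{\delta,k}$ cleanly isolates the two contributions. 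What each approach buys: the paper's mollification argument is soft and reuses machinery already in place (Lemma \ref{lem:dif:conv} and the $C^2$/$C^4$ truncation bounds), at the cost of an auxiliary parameter and a final optimization step; your argument is self-contained, avoids any $\|D^2 f^{\A}\|_0$-type quantity as you rightly emphasize, makes the provenance of the exponents $h^\sigma/k^2$ and $k^{\sigma-2}\delta^{\sigma(2-\sigma)/2}$ transparent, and arrives at the same constants-level bound without optimizing anything. One shared caveat: the stated range $\sigma\in(0,1)$ is evidently a typo (the norm $\|f^{\A}\|_{1,\sigma-1}$ only makes sense for $\sigma>1$, which is the regime where the lemma is invoked in Theorem \ref{thm:reg:approx:wdeg:nonlocal}); both your proof and the paper's implicitly require $\sigma\in(1,2)$, since both rest on $\grad f^{\A}$ being $(\sigma-1)$-H\"older with $\sigma-1\in(0,1]$.
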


\begin{proof}
  Let $f^{\A}_{(\gamma)}:=
f^{\A}*\rho_{\gamma} \in C^{\infty}_b(\rn)$. By Lemma
\ref{lem:dif:conv} and the fact that $f^{\A}\in
C^{1,\sigma-1}(\rn)$ by \ref{C3},
\begin{align}\label{regularize_f_prop}
 \|D^{m}f^{\A}_{(\gamma)}\|_0 \leq \frac{C \|f^{\A}\|_{1, \sigma-1}}{\gamma^{m-\sigma}}\quad  \mbox{and}\quad  \|f^{\A}-f^{\A}_{(\gamma)}\|_0\leq C \gamma^{\sigma} \|f^{\A}\|_{1, \sigma-1}.
\end{align}
Then by \eqref{SL_bd_interpolant}, \eqref{SL_approx_term}, the bound
on $a^{\A}_\delta$ in \eqref{a-bnd}, and first part of \eqref{regularize_f_prop}, 
\begin{align*}
 \Big|\mathcal{L}^{\A_0}_{\delta,k,h}[f^{\A}_{(\gamma)}]\Big| &\leq \Big|\mathcal{D}^{\A_0}_{\delta,k}[f^{\A}_{(\gamma)}]\Big|  +  \frac{C h^2}{k^2} \|D^2f^{\A}_{(\gamma)}\|_0 \notag\\
& \leq  K|(\sqrt{a^{\A}_{\delta}})_{i})|^2 \|D^2f^{\A}_{(\gamma)}\|_0
 + C \frac{h^2}{k^2} \|D^2f^{\A}_{(\gamma)}\|_0 \notag \\
 &\leq  \frac{K}{\gamma^{2-\sigma}} \Big( \delta^{2-\sigma} +
 \frac{h^2}{k^2}\Big) \|f^{\A}\|_{1,\sigma-1}. 
\end{align*}
By the second part of \eqref{regularize_f_prop} and the definition of
$\mathcal{L}^{\A_0}_{\delta,k,h}$ in \eqref{local_epprox_term},
\begin{align*}
\big|\mathcal{L}^{\A_0}_{\delta,k,h}[f^{\A}_{(\gamma)}] - \mathcal{L}^{\A_0}_{\delta,k,h}[f^{\A}]\big| 
= \big|\mathcal{L}^{\A_0}_{\delta,k,h}[f^{\A}_{(\gamma)} -f^{\A}] \big| \leq K \frac{\gamma^{\sigma}}{k^2}\|f^{\A}\|_{1,\sigma-1},
\end{align*} 
and then 
\begin{align}\label{upperbd_regularized_local_wdeg}
\|\mathcal{L}^{\A_0}_{\delta,k,h}[f^{\A}]\|_0  
\leq & \|\mathcal{L}^{\A_0}_{\delta,k,h}[f^{\A}_{(\gamma)}]\|_0 + \|\mathcal{L}^{\A_0}_{\delta,k,h}[f^{\A}_{(\gamma)}] - \mathcal{L}^{\A_0}_{\delta,k,h}[f^{\A}]\|_0 \notag \\
\leq & \,  K\Big[\frac{1}{\gamma^{2-\sigma}}\Big(\frac{h^2}{k^2}+ \delta^{2-\sigma}\Big) + \frac{\gamma^{\sigma}}{k^2} \Big]\|f^{\A}\|_{1, \sigma-1}.
\end{align}
The result follows by taking $\gamma = \max\{h, k\,\delta^{\frac{2-\sigma}{2}}\}$.
\end{proof}

\begin{proof}[Proof of Theorem \ref{thm:reg:approx:wdeg:nonlocal}]
(i)\quad  Since $u_h$ solves 
  \eqref{eqn:approx:wdeg:nonlocal}, we find as in the proof of Theorem
  \ref{thm:reg:wdeg:nonlocal}, that $-\I^{\A_0,\delta}_h
        [u_{h}]$ is a supersolution of  
\begin{align}\label{approx:wdeg:reg:eqn:nonlocal}
\lambda \, v(x) + \sup_{\A \in \mathcal{A}} \left\{- \mathcal{L}^{\A}_{\delta,k,h}[v]-  \, \I^{\A,\delta}_h [v]  -\I^{\A_0,\delta}_h[f^{\A}](x)  \right\} =0.
\end{align}
By assumptions \ref{C3} and \ref{A3},
$$\|\I^{\A_0,\delta}_h [f^{\A}]\|_0
\leq C_1 := \|f^\alpha\|_{1,\beta-1}\int_{|z|<1}|z|^\beta
\nu_\alpha(dz)+2\|f^{\A}\|_0 \int_{|z|\geq1}\nu_\alpha(dz),$$
where the constant $C_1\geq0$ is independent of $\alpha$, $\delta$,
and $h$. Since 
$-\frac{C_1}{\lambda}$ is a subsolution of
\eqref{approx:wdeg:reg:eqn:nonlocal}, the comparison principle yields that
$\I^{\A_0,\delta}_h [u_h](x) \leq \frac{C_1}{\lambda}.$ 
Arguing in the same way for the operator
$\mathcal{L}^{\A_0}_{\delta,k,h}$ and using Lemma \ref{lem:regularize_f}, we  get that 
\begin{align*}
\mathcal{L}^{\A_0}_{\delta,k,h}[u_h]  \leq \frac{K}{\lambda} \Big[ \frac{h^\sigma}{k^2}+ k^{\sigma-2} \delta^{\frac{\sigma(2-\sigma)}{2}}\Big] \|f^{\A}\|_{1, \sigma-1}.
\end{align*}
Taking $k \geq C \max\{\delta^{\frac{\sigma}{2}},
h^{\frac{\sigma}{2}}\}= C \delta^{\frac{\sigma}{2}}$ (assuming $\delta
\geq h$) we find a constant $C_2\geq0$ independent of $\alpha$, $k$, $h$, and $\delta$ such that
$\mathcal{L}^{\A_0}_{\delta,k,h}[u_h] \leq \frac {C_2}\lambda.$ 
Combining the two estimates then gives
$$\hat\I^{\A_0}_{\delta,k,h}[u_h](x)\leq \frac{C_1+C_2}\lambda.$$

To get the lower bound, we use the definition of  $\hat\I^{\A_0}_{\delta,k,h}[u_h]$ and
the fact that $u_h$ is a subsolution of
\eqref{eqn:approx:wdeg:nonlocal}, to see that
\begin{align*}
&-\,\hat\I^{\A_0}_{\delta,k,h} [u_h](x)   \leq \sup_{\A\in \mathcal{A}}\{- \hat\I^{\A}_{\delta,k,h}[u_h](x)\} \\
& \leq \lambda \, u_h(x)+  \sup_{\A \in \mathcal{A}} \left\{- \mathcal{L}^{\A}_{\delta,k,h}[u_h]- \,  \I^{\A,\delta}_h [u_h](x) + f^{\A}(x) \right\} + \Big( \lambda\|u_h\|_0 + \|f^{\A}\|_0\Big) \\
& \leq  \Big(\lambda \|u_h\|_0 + \|f^{\A}\|_0\Big).
\end{align*}
In view of \ref{A2} and Theorem \ref{thm:epprx_exist1} this completes the proof of \eqref{regbnd:approx:aux:nonlocal}.
\medskip

\noindent (ii) \quad The upper bound for
$-\J^{\A_0,\delta}_h[u_h]$ follows from the same reasoning that led to
the upper bound in part~(i). To prove the lower bound,  we first note
that $\int_{\delta<|z|<1}\omega_{\bj}(\eta^{\A_0}(z))
\Big(\frac{d\nu_{\A_0}}{dz}(z) - \frac{c_{\A_0}}{|z|^{N+\sigma}}\Big)\, dz
\geq 0$ by \ref{C1}$(i)$ and the fact $\omega_{\bj}\geq 0$. By 
arguments similar to those that led to estimate \eqref{prf_err_M3}, we
then find that
\begin{align*}
\sum_{\bj \in \mathbb Z^N} \big( u_h(x+x_{\bj}) - u_h(x) \big)  \int_{\delta<|z|<1}\omega_{\bj}(\eta^{\A_0}(z)) \Big(\frac{d\nu_{\A_0}}{dz}(z) - \frac{c_{\A_0}}{|z|^{N+\sigma}}\Big)\, dz \leq \frac{K}{\lambda} . 
\end{align*}
Then by \eqref{regbnd:approx:aux:nonlocal} (this bound also holds for
$\I^{\A_0,\delta}_h[u_h]$, see the proof), $\sum_{\bj \in 
  \zn}\omega_{\bj}(\eta^{\A_0}(z)) =1$, and \ref{A4}, we have 
\begin{align*}
-\J^{\A_0,\delta}_h[u_h]& \leq \frac{K}{\lambda} - \I^{\A_0,\delta}_h[u_h]\\
&\quad+\sum_{\bj \in \mathbb Z^N} \big( u_h(x+x_{\bj}) - u_h(x) \big)  \int_{|z|>1}\omega_{\bj}(\eta^{\A_0}(z)) \Big(\frac{d\nu_{\A_0}}{dz}(z) - \frac{c_{\A_0}}{|z|^{N+\sigma}}\Big)\, dz \\
& \leq 
K + C \|u_h\|_0\Big(\int_{|z|>1} \nu_{\A_{0}}(dz) + \int_{|z|>1}\frac{c_{\A_0} dz}{|z|^{N+\sigma}}\Big) \leq \, K + C\|u_h\|_0. 
\end{align*} 
 This completes the proof.
\end{proof}

By Theorem \ref{thm:regularity:vis_soln:wdeg:nonlocal} the solution $u$ of \eqref{eqn:main:wdeg:nonlocal} and its regularization $u^{(\epsilon)}$ satisfy the bounds of Lemma \ref{lem:dif:conv} with $\beta = \sigma -1$.  We now show similar bounds for the solution $u_h$ of 
the scheme \eqref{eqn:approx:wdeg:nonlocal} and regularizations of $u_h$. The results will incorporate error terms due to truncation bounds for approximate operators. 
\begin{lem} \label{lem:fracbd:conv:nonlocal}
	Assume \ref{A1}-\ref{A7}, \ref{C1}-\ref{C3}, $\delta\in(0,1)$, $\delta\geq h$, $u_h$ solves \eqref{eqn:approx:wdeg:nonlocal}, and $\tilde u_h= u_h * \phi$ for $0\leq \phi\in C^\infty(\rn)$ with $\int_{\rn}\phi \,dx=1$. Then there are $K_1,K_2>0$ independent of $\delta,k,h$ and $\phi$ such that
	\begin{align*}
  &(i)\quad\|(-\Delta)^{\frac{\sigma}{2}}[\tilde u_h]\|_0  \leq K_1 \Big( \|u_h\|_{0,1}+ \delta^{2-\sigma} (\|u_h\|_0  + \|D^2\tilde u_h\|_0 )\Big), 
  \notag \\[0.2cm]
 &(ii)\quad\|\tilde{u}_h\|_{1,\sigma-1} \leq K_2  \Big( 1+ \|u_h\|_{0,1} + \delta^{4-\sigma} \|D^4 \tilde{u}_h\|_0 + \delta^{2(2-\sigma)} k^2 \|D^4\tilde{u}_h\|_0  \notag \\
& \hspace*{6cm}+  \frac{h^2}{k^2} \|D^2 \tilde{u}_h\|_0 + h^{2} \delta^{-\sigma}\|D^2 \tilde{u}_h\|_0 \Big).
	\end{align*} 	
\end{lem}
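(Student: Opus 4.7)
The strategy is to transfer the discrete a priori estimates of Theorem \ref{thm:reg:approx:wdeg:nonlocal} to the smoothed function $\tilde u_h$, then invoke the truncation bounds of this section (now applicable because $\tilde u_h\in C^\infty_b$), and finally compare with $(-\Delta)^{\sigma/2}$. The first key observation is that $\J^{\A_0,\delta}_h$ and $\hat\I^{\A_0}_{\delta,k,h}$ are constant-coefficient finite linear combinations of translations, so they commute with convolution by $\phi$. Combined with $\phi\geq 0$ and $\int\phi =1$, this gives
\[ \|\J^{\A_0,\delta}_h[\tilde u_h]\|_0 \leq \|\J^{\A_0,\delta}_h[u_h]\|_0 \leq K/c_{\A_0}\qquad\text{and}\qquad \|\hat\I^{\A_0}_{\delta,k,h}[\tilde u_h]\|_0 \leq K, \]
where the right-hand estimates come directly from \eqref{regbnd:approx:aux:nonlocal}--\eqref{regbnd:approx:aux:frac_nonlocal}.

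For part (i), I apply the nonlocal truncation bound \eqref{truncbd_nonlocal_wosing} to the now-smooth $\tilde u_h$ to replace $\J^{\A_0,\delta}_h[\tilde u_h]$ by its continuous counterpart $\J^{\A_0}[\tilde u_h]$, losing only $C(\|\tilde u_h\|_0+\|D^2\tilde u_h\|_0)(\delta^{2-\sigma}+h^2\delta^{-\sigma})$, which collapses to $O(\delta^{2-\sigma})$ since $h\leq\delta$. It then remains to compare the continuous operator $\J^{\A_0}$ (Lévy density $|z|^{-N-\sigma}$, jumps $\eta^{\A_0}(z)$) with the fractional Laplacian (same density, jumps $z$). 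The difference reduces to
\[ \int_{\rn}\bigl|\tilde u_h(x+\eta^{\A_0}(z))-\tilde u_h(x+z)\bigr|\,\frac{dz}{|z|^{N+\sigma}} \;+\; \text{compensator terms}, \]
which by \ref{C1}(ii) ($|\eta^{\A_0}(z)-z|\leq K|z|^2$ for $|z|<1$, using $\eta^{\A_0}(0)=0$ from \ref{A3}) is bounded near $0$ by $\|D\tilde u_h\|_0\int_{|z|<1}|z|^{2-N-\sigma}dz \lesssim \|u_h\|_{0,1}$ and in the tail by $2\|\tilde u_h\|_0\int_{|z|>1}|z|^{-N-\sigma}dz \lesssim \|u_h\|_0$. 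The compensator mismatch is handled analogously using $\beta>(\sigma-1)^+$. Collecting yields (i).

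For part (ii), the parallel strategy uses the full operator $\hat\I^{\A_0}_{\delta,k,h}$ in place of $\J^{\A_0,\delta}_h$. Applying the truncation bounds of Lemmas \ref{lem:err_local_dif_sym_odd}, \ref{lem:local_trunc1}, and \ref{lem:trunc_err2} to $\tilde u_h$, together with the $\eta^{\A_0}(z)$ vs.\ $z$ comparison from the previous paragraph (which again costs only $O(\|u_h\|_{0,1})$), produces
\[ \|(-\Delta)^{\sigma/2}\tilde u_h\|_0 \leq K\bigl(1+\|u_h\|_{0,1}\bigr) + C\bigl(\delta^{4-\sigma}\|D^4\tilde u_h\|_0 + \delta^{2(2-\sigma)}k^2\|D^4\tilde u_h\|_0 + \tfrac{h^2}{k^2}\|D^2\tilde u_h\|_0 + h^2\delta^{-\sigma}\|D^2\tilde u_h\|_0\bigr). \]
Because $\sigma>1$ (needed for a $C^{1,\sigma-1}$ statement), the linear fractional regularity of Ros-Oton--Serra (Theorem 1.1(a) of \cite{Oton2016}, already used in Theorem \ref{thm:regularity:vis_soln:wdeg:nonlocal}) applied to the smooth $\tilde u_h$ gives $\|\tilde u_h\|_{1,\sigma-1}\leq K(\|\tilde u_h\|_0+\|(-\Delta)^{\sigma/2}\tilde u_h\|_0)$, and combining with the above estimate (and $\|\tilde u_h\|_0\leq\|u_h\|_0\leq\|u_h\|_{0,1}$) yields (ii).

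The main obstacle is handling two independent sources of discrepancy in tandem: the discrete-to-continuous quadrature truncation error (which forces the regularization $u_h\mapsto\tilde u_h$ before any $\|D^2\|_0$ or $\|D^4\|_0$ can be invoked) and the mismatch between the jump map $\eta^{\A_0}$ and the pure-translation jump $z$ of the fractional Laplacian, while ensuring no step requires a norm of $u_h$ above Lipschitz.
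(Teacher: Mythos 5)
Your part (i) follows the paper's proof in all essentials: commute the translation-invariant discrete operators with the convolution to transfer the bounds of Theorem \ref{thm:reg:approx:wdeg:nonlocal} to $\tilde u_h$, use the truncation bound \eqref{truncbd_nonlocal_wosing} together with $h\le\delta$, and compare $\J^{\A_0}$ with $(-\Delta)^{\frac{\sigma}{2}}$ via \ref{C1}(ii). That last comparison is legitimate there precisely because $\J^{\A_0}$ and the fractional Laplacian have the \emph{same} density $|z|^{-N-\sigma}$ and differ only in the jump map. (Minor slip: the compensator mismatch is controlled by \ref{C1}(ii), not by the exponent $\beta$ of \ref{C3}, which concerns $f^{\A}$ and plays no role here.)

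Part (ii) has a genuine gap. You claim that passing from $\I^{\A_0}$ to $(-\Delta)^{\frac{\sigma}{2}}$ ``again costs only $O(\|u_h\|_{0,1})$'', but here, unlike in part (i), there are \emph{two} mismatches: the jump map ($\eta^{\A_0}(z)$ vs.\ $z$, quadratic by \ref{C1}(ii), indeed controllable by Lipschitz norms) and the \emph{density} ($\tfrac{d\nu_{\A_0}}{dz}$ vs.\ a multiple of $|z|^{-N-\sigma}$). By \ref{C1}(i) and \ref{A6} the density is only sandwiched between two stable kernels, so the difference kernel is itself of full order $\sigma$: its action on $\tilde u_h$ cannot be bounded by $\|u_h\|_{0,1}$ when $\sigma>1$ (the integral $\int_{|z|<1}|z|\,|z|^{-N-\sigma}\,dz$ diverges), while a second-order Taylor bound costs $\|D^2\tilde u_h\|_0$ with \emph{no small prefactor} --- exactly the kind of term the improved error structure of (ii) must avoid. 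Indeed, your claimed intermediate inequality for $\|(-\Delta)^{\frac{\sigma}{2}}\tilde u_h\|_0$ with the improved error terms would strengthen part (i), contradicting the paper's remark that (ii) is better than anything obtainable from a bound of type (i) plus elliptic regularity. The paper circumvents this by introducing in its proof the auxiliary operator $\widetilde{\mathcal{J}}$, which keeps the true measure $\nu^{\A_0}$ on $|z|<1$ and changes only the jump map to $z$ (and the tail density to $c_{\A_0}|z|^{-N-\sigma}$), so that the comparison with $\I^{\A_0}$ really does cost only $O(\|u_h\|_{0,1})$ plus the truncation-error terms; and since $\widetilde{\mathcal{J}}$ is not the fractional Laplacian (nor a stable operator in general), the elliptic estimate is taken not from \cite{Oton2016} but from \cite[Theorem 3.8]{DRSV-regularity}, whose hypotheses are precisely the two-sided bounds \ref{C1}(i) and \ref{A6}. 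To repair your argument you must replace the target operator $(-\Delta)^{\frac{\sigma}{2}}$ in part (ii) by such a $\nu_{\A_0}$-adapted translation-invariant operator and invoke a regularity theorem valid for that class.
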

Note that a bound like (ii) follows from (i) by elliptic regularity, but bound (ii) is an improvement on any bound coming from (i).
\smallskip
\begin{proof}
(i) \  
 Note that $\eta^{\alpha_0}(0)=0$ by \ref{A3}, $z-\eta^\A_0(z)=\mathcal{O}(|z|^2)$ by \ref{C1}$(ii)$, 
and $\|\tilde u_h\|_{0,1}\leq \|u_h\|_{0,1}$ by properties of convolutions. By the definition of $\J^{\A_0}$  \eqref{calligraphy-J}, assumptions \ref{A3}, \ref{A5}--\ref{A7}, \ref{C1}, and the truncation error bound \eqref{truncbd_nonlocal_wosing}, 
\begin{align*}
&|(-\Delta)^{\frac{\sigma}{2}}[\tilde{u}_h](x)| \\
&\leq  |\J^{\A_0}[\tilde{u}_h](x)|+ \Big| \int_{|z|<1}  (z-\eta^{\A_0}(z)) \cdot \grad \tilde{u}_h(x)\, \frac{dz}{|z|^{N+\sigma}}\Big|  \\
& \quad  
+ \Big| \int_{|z|>1} \tilde{u}_h (x+z) -  \tilde{u}_h (x +  \eta^{\A_0}(z)) \frac{dz}{|z|^{N+\sigma}} \Big| \\
& \leq \|\J^{\A_0}[\tilde{u}_h]\|_0 +   \|\grad \tilde{u}_h\|_{0} \int_{|z|<1}\frac{K|z|^2\,dz}{|z|^{N+\sigma}} + 2\|\tilde{u}_h\|_0 \int_{|z|>1}\frac{dz}{|z|^{N+\sigma}} \\
&\leq \|\J_h^{\A_0,\delta}[\tilde{u}_h]\|_0  + C (\|u_h\|_0 +  \|D^2 \tilde u_h\|_0) \big( \delta^{2-\sigma}+ h^{2}\delta^{-\sigma} \big)  +c_\sigma \|u_h\|_{0,1}.
\end{align*}
The proof is complete since by Theorem \ref{thm:reg:approx:wdeg:nonlocal} and properties of convolutions,   
 $$ \|\J^{\A_0,\delta}_h [\tilde{u}_h]\|_0 \leq C \|\J^{\A_0,\delta}_h[u_h]\|_0\leq K. $$

\noindent (ii) \ By Theorem \ref{thm:reg:approx:wdeg:nonlocal} and  properties of convolutions,  
  $\|\hat{\mathcal{I}}^{\A_0}_{\delta,k,h}  [\tilde{u}_h]\|_0 \leq C \|\hat\I^{\A_0}_{\delta,k,h} [u_h]\|_0\leq K.$ 
	From the error bounds \eqref{err_local_dif_sym}, \eqref{local_trunc1}, and \eqref{nonl_trunc1}, it then follows that
	\begin{align}\label{esti1_lem_reg:wdeg}
	&\|\mathcal{I}^{\A_0} [\tilde u_h]\|_0  \leq K_1\Big( K 
 + \delta^{4-\sigma} \|D^4 \tilde u_h\|_0 + \delta^{2(2-\sigma)} k^2 \|D^4\tilde  u_h\|_0  \notag \\ 
	& \hspace*{5.4cm} + \frac{h^2}{k^2} \|D^2 \tilde u_h\|_0+ \frac{h^{2}}{\delta^{\sigma}} \|D^2 \tilde u_h\|_0 \Big).
	\end{align}
We define the operator
\begin{align*}
\widetilde{\mathcal{J}}[\phi](x) := & \int_{|z|<1} \big( \phi(x+z) -\phi (x) -z\cdot \nabla \phi (x) \big) \nu^{\A_0}(dz) \\
 & \hspace{2cm} + \int_{|z|>1} \big( \phi(x+z) -\phi (x) -z\cdot \nabla \phi (x) \big) \frac{c_{\A_0}}{|z|^{N+\sigma}} .    
\end{align*}
Since $z-\eta^\A_0(z)=\mathcal{O}(|z|^2)$ by \ref{C1}$(ii)$ and 
$\|\tilde u_h\|_{0,1}\leq \|u_h\|_{0,1}$, by \eqref{esti1_lem_reg:wdeg} we have
\begin{align} 
    &\big|\widetilde{\mathcal{J}}[\tilde{u}_h](x)\big| \leq  \big| \mathcal{I}^{\A_0}[\tilde{u}_h](x)\big| \notag\\
    &\quad +\Big| \int_{|z|<1} \big( \tilde{u}_h(x+z) -\tilde{u}_h (x+ \eta^{\A_0}(z)) -(z- \eta^{\A_0}(z))\cdot \nabla \tilde{u}_h (x) \big) \nu^{\A_0}(dz)\Big|  \notag \\
    & \quad +  \Big|\int_{|z|>1} \big( \tilde{u}_h(x+z) -\tilde{u}_h(x) \big)  \frac{c_{\A_0} dz}{|z|^{N+\sigma}} -\int_{|z|>1} \big( \tilde{u}_h(x+j^\A(z)) -\tilde{u}_h(x) \big) \nu^{\A_0}(dz) \Big|\notag \\
    &\leq  \big|\mathcal{I}^{\A_0}[\tilde{u}_h](x)\big| + 2\|\grad \tilde{u}_h\|_0 \int_{|z|<1} |z-\eta^{\A_0}(z)|\, \nu^{\A_0}(dz) + 2 \|\tilde{u}_h\|_0 \int_{|z|>1} \frac{(c_{\A_0}+C)}{|z|^{N+\sigma}} \notag \\
    & \leq C\Big( K + \delta^{4-\sigma} \|D^4 \tilde{u}_h\|_0 + \delta^{2(2-\sigma)} k^2 \|D^4\tilde{u}_h\|_0  \notag \\
& \hspace*{5cm}+  \frac{h^2}{k^2} \|D^2 \tilde{u}_h\|_0 + \frac{h^{2}}{\delta^{\sigma}} \|D^2 \tilde{u}_h\|_0  + \|u_h\|_{0,1}\Big). \label{esti2_lem_reg:wdeg}
\end{align}
Hence $\widetilde{\mathcal{J}}[\tilde{u}_h] \in L^{\infty}(\rn)$ for fixed $\delta$ and $h$. 
 By \ref{C1}$(i)$ and \ref{A6}, the assumptions of the regularity result \cite[Theorem 3.8]{DRSV-regularity} are satisfied, and we conclude that
\begin{align*}
\|\tilde{u}_h\|_{1,\sigma-1} \leq K \Big( \|\tilde{u}_h\|_{0} + \|\widetilde{\mathcal{J}} [\tilde{u}_h]\|_0\Big). 
\end{align*}
The result then follows from \eqref{esti2_lem_reg:wdeg}. 	
\end{proof}

We now give results approximation and derivative bounds mollifications of $u_h$ by the fractional heat kernel. These are discrete versions of Lemmas \ref{lem:appndx_1} and \ref{lem:appndx_2}. 

\begin{lem}\label{lem:bd:der:numsol_reg:wdeg:nonlocal}
	 Assume $\delta\in(0,1)$, $h\leq \delta$, $\epsilon>0$, \ref{A1}-\ref{A5}, \ref{C1}-\ref{C3}, $u_h$ solves \eqref{eqn:approx:wdeg:nonlocal}, and its mollification $u_h^{[\varepsilon]}$ is defined in \eqref{veps2}. Then for $m\geq 2$,  \begin{align*}
	 \|D^m u_h^{[\varepsilon]}\|_0 \leq K \frac{\|u_h\|_{0,1}}{\varepsilon^{m-\sigma}}\Big(1+  \big(\delta^{4-\sigma}  + \delta^{2(2-\sigma)} k^{2} \big) \frac{1}{\varepsilon^3} + \big(h^2 k^{-2} + h^2 \delta^{-\sigma} \big) \frac{1}{\varepsilon} \Big).
\end{align*}
\end{lem}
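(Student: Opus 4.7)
The plan is to mirror the strategy of Lemma \ref{lem:appndx_2}, but replacing the continuous regularity input (Theorem \ref{thm:regularity:vis_soln:wdeg:nonlocal}, i.e.\ a bound on $\|u\|_{1,\sigma-1}$) by its discrete counterpart obtained in Lemma \ref{lem:fracbd:conv:nonlocal}(ii). The key observation is that the fractional heat kernel $\tilde K^{\sigma}(\varepsilon^\sigma,\cdot)$ is smooth, non-negative, and integrates to $1$, so it is an admissible choice of convolution kernel $\phi$ in Lemma \ref{lem:fracbd:conv:nonlocal}.

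First, I would use the semigroup decomposition $u_h^{[\varepsilon]}=u_h^{[\varepsilon_1]}\ast \tilde K^{\sigma}(\varepsilon^\sigma/2,\cdot)$ (with $\varepsilon_1=\varepsilon/2^{1/\sigma}$) exactly as in Lemma \ref{lem:appndx_2}, and differentiate the kernel to peel off $m-1$ derivatives. Using the vanishing of $\int D^\alpha \tilde K^\sigma(\varepsilon^\sigma/2,\cdot)$ and $(\sigma-1)$-Hölder regularity of $Du_h^{[\varepsilon_1]}$, the same computation as in Lemma \ref{lem:appndx_2} yields
\begin{equation*}
\|D^m u_h^{[\varepsilon]}\|_0 \;\le\; \frac{C}{\varepsilon^{m-\sigma}}\,\|u_h^{[\varepsilon_1]}\|_{1,\sigma-1}.
\end{equation*}

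Next, I apply Lemma \ref{lem:fracbd:conv:nonlocal}(ii) with $\phi=\tilde K^{\sigma}(\varepsilon_1^{\sigma},\cdot)$ and $\tilde u_h= u_h^{[\varepsilon_1]}$ to get
\begin{equation*}
\|u_h^{[\varepsilon_1]}\|_{1,\sigma-1} \le K_2\Big(1+\|u_h\|_{0,1}+ \big(\delta^{4-\sigma}+\delta^{2(2-\sigma)}k^2\big)\|D^4 u_h^{[\varepsilon_1]}\|_0 + \big(h^2k^{-2}+h^2\delta^{-\sigma}\big)\|D^2 u_h^{[\varepsilon_1]}\|_0\Big).
\end{equation*}
To close the loop, I control $\|D^2 u_h^{[\varepsilon_1]}\|_0$ and $\|D^4 u_h^{[\varepsilon_1]}\|_0$ by the \emph{first} estimate in Lemma \ref{lem:appndx_2} applied to $v=u_h$, which gives $\|D^j u_h^{[\varepsilon_1]}\|_0\le C\|u_h\|_{0,1}/\varepsilon_1^{j-1}$ (and $\varepsilon_1\sim\varepsilon$). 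Substituting the bounds $\|D^2 u_h^{[\varepsilon_1]}\|_0\le C\|u_h\|_{0,1}/\varepsilon$ and $\|D^4 u_h^{[\varepsilon_1]}\|_0\le C\|u_h\|_{0,1}/\varepsilon^3$ produces
\begin{equation*}
\|u_h^{[\varepsilon_1]}\|_{1,\sigma-1}\le K\|u_h\|_{0,1}\Big(1+\big(\delta^{4-\sigma}+\delta^{2(2-\sigma)}k^2\big)\varepsilon^{-3}+\big(h^2k^{-2}+h^2\delta^{-\sigma}\big)\varepsilon^{-1}\Big),
\end{equation*}
and multiplying by $\varepsilon^{-(m-\sigma)}$ gives the stated inequality.

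The main subtlety I expect is verifying that Lemma \ref{lem:fracbd:conv:nonlocal}(ii) is genuinely applicable with $\phi=\tilde K^{\sigma}(\varepsilon_1^{\sigma},\cdot)$: that lemma was stated for a compactly supported smooth mollifier, while the fractional heat kernel only decays polynomially. However, the proof of (ii) only uses $\phi\ge 0$, $\int\phi=1$, and standard properties of convolution (commutation with finite differences, boundedness of $\|\cdot\|_{0,1}$, control of discrete operators on $\tilde u_h$ by the same operators on $u_h$), all of which hold for the fractional heat kernel. Once this point is noted, the rest of the argument is a bookkeeping exercise, combining the two ingredients above and simplifying.
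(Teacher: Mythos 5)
Your proposal is correct and coincides with the paper's own proof: the second estimate of Lemma \ref{lem:appndx_2} reduces the claim to bounding $\|u_h^{[\varepsilon_1]}\|_{1,\sigma-1}$, which is handled by Lemma \ref{lem:fracbd:conv:nonlocal}(ii) with $\phi=\tilde K^\sigma(\varepsilon_1^\sigma,\cdot)$, and the remaining norms $\|D^2 u_h^{[\varepsilon_1]}\|_0$, $\|D^4 u_h^{[\varepsilon_1]}\|_0$ are absorbed via the first part of Lemma \ref{lem:appndx_2}, exactly as in the paper. Your one flagged ``subtlety'' is in fact vacuous: Lemma \ref{lem:fracbd:conv:nonlocal} is stated for any $0\leq\phi\in C^{\infty}(\rn)$ with $\int_{\rn}\phi\,dx=1$ (no compact support is assumed), so the fractional heat kernel is admissible without further verification.
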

\begin{proof}
By Lemma \ref{lem:appndx_2} and Lemma \ref{lem:fracbd:conv:nonlocal} $(ii)$ with $\phi (x) = \tilde{K}^\sigma(\varepsilon^\sigma,x)$,
\begin{align*}
\|D^m u_h^{[\varepsilon]}\|_0 &\leq \frac{K}{\epsilon^{m-\sigma}} \|u_h^{[\varepsilon_1]}\|_{1,\sigma-1} \\ & \leq \frac{K}{\varepsilon^{m-\sigma}}\Big(1+ \|u_h\|_{0,1} + \delta^{4-\sigma} \|D^4 u_h^{[\varepsilon_1]}\|_0 + \delta^{2(2-\sigma)} k^2 \|D^4u_h^{[\varepsilon_1]}\|_0  \notag \\
& \hspace*{4,5cm}+  \frac{h^2}{k^2} \|D^2 u_h^{[\varepsilon_1]} \|_0 + \frac{h^{2}}{\delta^{\sigma}} \|D^2 u_h^{[\varepsilon_1]}\|_0 \Big),
\end{align*} 
where $\varepsilon_1= \frac{\varepsilon}{2^{\frac{1}{\sigma}}}$. The result then follows from the first part of Lemma \ref{lem:appndx_2}. 
\end{proof}

\begin{lem} \label{thm:regularization_bd:wdeg:nonlocal}
Assume $0<h\leq \delta\leq \epsilon$, $\delta\in(0,1)$, \ref{A1}-\ref{A5}, \ref{C1}-\ref{C3}, $u_h$ solves \eqref{eqn:approx:wdeg:nonlocal}, and its mollification $u_h^{[\varepsilon]}$ is defined in \eqref{veps2}. Then 
\begin{align*}
 \|u_h^{[\varepsilon]} - u_h\|_0  \leq 
 C\big(\delta+\varepsilon^\sigma + \delta^{2-\sigma}\varepsilon^{2(\sigma-1)}+ k^2\delta^{1-\sigma}  + \frac{h^2}{k^2}\delta^{\sigma-1}\big). 
\end{align*}
\end{lem}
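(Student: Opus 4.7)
The plan is to start from the fractional heat semigroup identity
\[
u_h^{[\varepsilon]}-u_h=-\int_0^{\varepsilon^\sigma}(-\Delta)^{\frac{\sigma}{2}}\bigl[u_h^{[t^{1/\sigma}]}\bigr]\,dt,
\]
and split the time integral at the cut-off $t_0=\delta^\sigma$. For the short-time part $\int_0^{\delta^\sigma}\partial_t S_t u_h\,dt = u_h^{[\delta]}-u_h$, I would estimate $\|u_h^{[\delta]}-u_h\|_0$ directly by writing $u_h^{[\delta]}(x)-u_h(x)=\int_{\rn}(u_h(x-y)-u_h(x))\tilde K^\sigma(\delta^\sigma,y)\,dy$ and splitting the $y$-integration at a fixed constant scale $R\sim\|u_h\|_0/\|u_h\|_{0,1}$ where the Lipschitz and $L^\infty$ bounds on $u_h$ balance: Lipschitz regularity controls $|y|\le R$ by $C\delta$, while the tail decay from property~(iii) gives $\int_{|y|>R}\tilde K^\sigma(\delta^\sigma,y)\,dy\le C\delta^\sigma$. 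This produces $\|u_h^{[\delta]}-u_h\|_0\le C(\delta+\delta^\sigma)\le C(\delta+\varepsilon^\sigma)$, since $\delta\le\varepsilon$.

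For the long-time part on $t\in(\delta^\sigma,\varepsilon^\sigma)$, I would apply Lemma~\ref{lem:fracbd:conv:nonlocal}(i) with the admissible kernel $\phi=\tilde K^\sigma(t,\cdot)$, i.e.\ with $\tilde u_h=u_h^{[s]}$, $s=t^{1/\sigma}\ge\delta$, and then Lemma~\ref{lem:bd:der:numsol_reg:wdeg:nonlocal} with $m=2$, to get
\[
\bigl\|(-\Delta)^{\frac{\sigma}{2}}\bigl[u_h^{[s]}\bigr]\bigr\|_0\le C\bigl(1+\delta^{2-\sigma}s^{\sigma-2}+\delta^{2-\sigma}A_1 s^{\sigma-5}+\delta^{2-\sigma}A_2 s^{\sigma-3}\bigr),
\]
where $A_1=\delta^{4-\sigma}+\delta^{2(2-\sigma)}k^2$ and $A_2=h^2/k^2+h^2\delta^{-\sigma}$. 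Integrating over $t\in(\delta^\sigma,\varepsilon^\sigma)$ term by term: the constant contributes $C\varepsilon^\sigma$; the $s^{\sigma-2}$ term contributes $C\delta^{2-\sigma}\varepsilon^{2(\sigma-1)}$ (the $t$-integral converges at $0$ for $\sigma>1$, and for $\sigma\le1$ the contribution is $\le C\delta^\sigma\le C\varepsilon^\sigma$); and the divergent-at-zero integrals of $s^{\sigma-5}$ and $s^{\sigma-3}$ are dominated by their values at the lower endpoint $t=\delta^\sigma$, yielding $C\delta^{2-\sigma}A_1\delta^{2\sigma-5}=C(\delta+k^2\delta^{1-\sigma})$ and $C\delta^{2-\sigma}A_2\delta^{2\sigma-3}=C(h^2\delta^{\sigma-1}/k^2+h^2/\delta)$ respectively; the residue $h^2/\delta\le h\le\delta$ since $h\le\delta$.

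The main obstacle is that the cut-off $t_0$ must play two competing roles: small enough that the divergent $A_1$ and $A_2$ integrals, controlled at $t_0$, yield precisely the target scheme-dependent terms $k^2\delta^{1-\sigma}$ and $h^2\delta^{\sigma-1}/k^2$, yet large enough that the short-time error $\|u_h^{[\delta]}-u_h\|_0$ fits within the $\delta+\varepsilon^\sigma$ budget. The value $t_0=\delta^\sigma$ is precisely the critical scale balancing both requirements. Additional care is needed for the $\sigma\le1$ regime, where the $s^{\sigma-2}$ integral loses integrability at $0$ and for the logarithmic boundary $\sigma=1$; in all these cases the assumption $\delta\le\varepsilon$ absorbs stray powers of $\delta$ into $\varepsilon^\sigma$, yielding the stated five-term bound.
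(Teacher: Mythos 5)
Your proposal reproduces the paper's proof in all essentials: both write $u_h^{[\varepsilon]}-u_h$ as a time integral of $(-\Delta)^{\frac{\sigma}{2}}S_t[u_h]$ over the fractional heat semigroup, split at $t_0=\delta^\sigma$, bound the integrand by combining Lemma \ref{lem:fracbd:conv:nonlocal}$(i)$ with the derivative bounds of Lemma \ref{lem:bd:der:numsol_reg:wdeg:nonlocal}, evaluate the singular contributions at the lower endpoint to produce the terms $\delta$, $k^2\delta^{1-\sigma}$, $\frac{h^2}{k^2}\delta^{\sigma-1}$, and absorb $h^2\delta^{-1}\leq\delta$ using $h\leq\delta$; your short-time kernel-splitting estimate is just a longer route to the paper's one-line bound $\|S_s[u_h]-u_h\|_0\leq Cs^{\frac{1}{\sigma}}\|u_h\|_{0,1}$ (valid since the kernel has a finite first moment for $\sigma>1$, the only regime where the supporting Lemma \ref{lem:appndx_2} applies). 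One shared caveat, present in the paper's own write-up as well and hence not a defect of your argument relative to it: the $s^{\sigma-3}$ integral is divergent at zero only for $\sigma<\frac{3}{2}$, so for $\sigma\in[\frac{3}{2},2)$ the claim that the lower endpoint dominates needs an extra remark (there the upper endpoint $\varepsilon^\sigma$ controls that integral).
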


\begin{proof}
Let $ S_t $ be the fractional heat semigroup (c.f. the proof of Lemma \ref{lem:appndx_1}) so that $u_h^{[\varepsilon]} = S_{\varepsilon^\sigma}(u_h)$. By properties of $S_t$ and Lemmas \ref{lem:fracbd:conv:nonlocal} $(i)$ and \ref{lem:bd:der:numsol_reg:wdeg:nonlocal}, we have 
\begin{align*}
& |S_t[u_h] - S_s[u_h]| =  \Big|\int_s^t (-\Delta)^{\frac{\sigma}{2}}\big[S_r[u_h]\big] dr \Big| \leq C\int_s^t\big(1 +\delta^{2-\delta}(1+{\|D^2u_h^{[r^{\frac{1}{\sigma}}]}\|}_0)\big) d r\\
& \leq C\int_s^t \Big(1+ \frac{\delta^{2-\sigma}}{r^{\frac{2-\sigma}{\sigma}}}\Big(1+ \frac{\delta^{4-\sigma} + \delta^{2(2-\sigma)}k^2}{r^{\frac{3}{\sigma}}} + \frac{h^2k^{-2}+ h^2\delta^{-\sigma}}{r^{\frac{1}{\sigma}}} \Big)\Big) d r  \\
& \leq C\Big(t +  \delta^{2-\sigma} t^{\frac{2(\sigma-1)}{\sigma}} + \big(\delta^{6-2\sigma} + \delta^{6-3\sigma}k^2\big) s^{\frac{2\sigma-5}{\sigma}} +\big( h^2\delta^{2-2\sigma}+ h^2\delta^{2-\sigma}k^{-2}\big)s^{\frac{2\sigma-3}{\sigma}} \Big). 
\end{align*}
Since  $ \|S_s[u_h]-[u_h]\|_0 \leq Cs^{\frac{1}{\sigma}}{\|u_h\|}_{0,1}$, we then find that
\begin{align*}
\|S_t[u_h]-[u_h]\|_0 & \leq C\Big(s^{\frac{1}{\sigma}} + t +  \delta^{2-\sigma} t^{\frac{2(\sigma-1)}{\sigma}} + \big(\delta^{6-2\sigma} + \delta^{6-3\sigma}k^2\big) s^{\frac{2\sigma-5}{\sigma}} \\
& \hspace{4cm} +\big( h^2\delta^{2-2\sigma} + h^2\delta^{2-\sigma}k^{-2}\big)s^{\frac{2\sigma-3}{\sigma}}\Big).
\end{align*}
This estimate holds for any  $s\in(0,t)$. Note that since $h\leq\delta$, the Take $t = \varepsilon^\sigma$ and $s = \delta^{\sigma}$ to find that
\begin{align*}
\|u_h^{[\varepsilon]} - u_h\|_0 & \leq C\Big(\delta+ \varepsilon^\sigma + \delta^{2-\sigma}\varepsilon^{2(\sigma-1)} + \big(\delta^{6-2\sigma} + \delta^{6-3\sigma}k^2\big) \delta^{2\sigma-5} \\
& \hspace{4,5cm} +\big( h^2\delta^{2-2\sigma} + h^2\delta^{2-\sigma}k^{-2}\big)\delta^{2\sigma-3}\Big).\\
&\leq C(\delta+\varepsilon^\sigma + \delta^{2-\sigma}\varepsilon^{2(\sigma-1)}+ \delta+ k^2\delta^{1-\sigma} + \delta + \frac{h^2}{k^2}\delta^{\sigma-1}).
\end{align*}
This completes the proof. 
\end{proof}
In the last proof the dependence on the parameters are only partially optimized, but the result is still good enough for our purposes -- the optimal error bound that we will prove next.
\subsubsection*{Proof of Theorem \ref{thm:er_bound:wdeg}.}
The proof is similar to the proof of Theorem \ref{thm:estimate_sym_main}, and only the case  $\sigma>1$ is new.  
 Let $\big(\rho_{\epsilon}\big)_{\epsilon>0}$ be the standard mollifier on $\rn$ and define $u^{(\epsilon)}= u * \rho_{\epsilon}$. Since $u$ is the viscosity solution of \eqref{eqn:main:wdeg:nonlocal}, $u^{(\epsilon)}$ is a smooth solution of 
	\begin{align*}
	\lambda \, u^{(\epsilon)} + \sup_{\A \in \mathcal{A}} \left\{ (f^{\A})^{(\epsilon)}(x)- \I^{\A} [u^{(\epsilon)}]  \right\} \leq 0 .
	\end{align*}
	By Theorem \ref{thm:regularity:vis_soln:wdeg:nonlocal}, $u \in C^{1, \sigma-1}(\rn)$, and by \ref{C3} and Lemma \ref{lem:dif:conv}, $\|f^{\A}-(f^{\A})^{(\varepsilon)}\|_0 \leq K \varepsilon^{\sigma}$. Therefore, from the truncation error bounds \eqref{err_local_dif_sym}, \eqref{local_trunc1} and \eqref{nonl_trunc1} we get
	\begin{align*}
	\lambda \, u^{(\epsilon)} + & \sup_{\A \in \mathcal{A}} \left\{ f^{\A}(x) - \I^{\A}_h u^{(\epsilon)}  \right\}  \leq \sup_{\A\in \mathcal{A}} \Big[\|f^{\A} - (f^{\A})^{(\epsilon)}\|_0 +  \| \I^{\A}_h [u^{(\epsilon)}] - \I^{\A} [u^{(\epsilon)}]\|_0\Big]\\
	& \leq C\epsilon^\sigma + C \Big(\delta^{4-\sigma} \|D^4 u^{(\varepsilon)}\|_0 + \delta^{2(2-\sigma)} k^2 \|D^4u^{(\varepsilon)}\|_0  \notag \\ 
	& \hspace*{3cm} + \frac{h^2}{k^2} \|D^2 u^{(\varepsilon)}\|_0+ \frac{h^2}{\delta^{\sigma}} \|D^2 u^{(\varepsilon)}\|_0\Big) \\
	& \leq C \Big(\epsilon^{\sigma} + \delta^{4-\sigma}\frac{1}{\varepsilon^{4-\sigma}} + \delta^{2(2-\sigma)}k^2\frac{1}{\varepsilon^{4-\sigma}} + \frac{h^2}{k^2} \frac{1}{\varepsilon^{2-\sigma}} + \frac{h^2}{\delta^{\sigma}} \frac{1}{\varepsilon^{2-\sigma}}\Big):= A_{\varepsilon}.
	\end{align*}
	 Hence $u^{(\epsilon)} - \frac{C}{\lambda}A_{\varepsilon}$ is a subsolution of the equation \eqref{eqn:approx:wdeg:nonlocal}, and the comparison principle for \eqref{eqn:approx:wdeg:nonlocal} then implies that
	$u^{(\epsilon)} - \frac{C}{\lambda} A_{\varepsilon} \leq u_h$. 
	By Theorem \ref{thm:regularity:vis_soln:wdeg:nonlocal} and Lemma~\ref{lem:dif:conv},   $\|u^{(\epsilon)} - u\| \leq K \epsilon^{\sigma }$, and we conclude that
	\begin{align*}
	u(x) - u_h(x) \leq C \epsilon^{\sigma}+ \frac{C}{\lambda}A_\varepsilon.
	\end{align*}  
	Minimizing by taking $k^2= O\big(\frac{h\varepsilon}{\delta^{2-\sigma}}$\big), $\delta = O\big(h^\frac{1}{2}\varepsilon^\frac{1}{2}\big)$, and  $\varepsilon= O\big(h^{\frac{4-\sigma}{4+\sigma}}\big)$, leads to 
	\begin{align*}
	u(x) - u_h(x) \leq K h^{\frac{\sigma(4-\sigma)}{4+ \sigma}}.
	\end{align*}
	
	The lower bound on $u - u_h$ follows from a similar argument based on the solution $u_h$ of the scheme \eqref{eqn:approx:wdeg:nonlocal}. For technical reasons, we need to work with a different regularisation $u_h^{[\epsilon]}$ based on the fractional heat kernel, see the definition in \eqref{veps2}.
	Since $u_h$ solves \eqref{eqn:approx:wdeg:nonlocal}, we have 
	\begin{align*}
	\lambda \, u_h^{[\epsilon]} + \sup_{\A \in \mathcal{A}} \left\{ (f^{\A})^{[\epsilon]}(x) -\I^{\A}_h [u_h^{[\epsilon]}]  \right\} \leq 0 .
	\end{align*}
By \ref{C3} and Lemma \ref{lem:appndx_1}, $\|f^{\A} - (f^{\A})^{[\epsilon]}\| \leq C \epsilon^{\sigma}$, and then by Lemmas~\ref{lem:err_local_dif_sym_odd},~\ref{lem:local_trunc1} and~\ref{lem:trunc_err2}, 
	\begin{align*}
	 & \lambda \, u_h^{[\epsilon]} + \sup_{\A \in \mathcal{A}} \left\{ f^{\A}(x) -\I^{\A} [u_h^{[\epsilon]}]  \right\}  \leq K \epsilon^{\sigma} + \|\I^{\A}[u_h^{[\epsilon]}] - \I^{\A}_h[u_h^{[\epsilon]}]\|_0\\
	&  \leq K \epsilon^{\sigma} + C \Big( \big(\delta^{4-\sigma}  + \delta^{2(2-\sigma)} k^2 \big) \|D^4 u_h^{[\epsilon]}\|_0 +  \big(h^2 k^{-2} + h^{2} \delta^{-\sigma}\big) \|D^2 u_h^{[\epsilon]}\|_0 \Big)  := B_{\varepsilon}.
	\end{align*} 
	Hence  $u_h^{[\epsilon]} - \frac{C}{\lambda}B_{\epsilon}$ is a subsolution of equation \eqref{eqn:main:wdeg:nonlocal}, and  the comparison principle for \eqref{eqn:main:wdeg:nonlocal} then implies that $u_h^{[\epsilon]} - u \leq  \frac{C}{\lambda} \, B_{\epsilon}.$ 
	Therefore by Lemma \ref{thm:regularization_bd:wdeg:nonlocal}  
 and the bounds on $\|D^4 u_h^{[\epsilon]}\|_0$ and $\|D^2 u_h^{[\epsilon]}\|_0$ from Lemma \ref{lem:bd:der:numsol_reg:wdeg:nonlocal}, 
 we get	
\begin{align*}
 u_h - \,& u   \leq  C\Big(\delta+\varepsilon^\sigma + \delta^{2-\sigma}\varepsilon^{2(\sigma-1)}+ k^2\delta^{1-\sigma}  + \frac{h^2}{k^2}\delta^{\sigma-1}\Big) \\[0.2cm]
 &  +  C  \big(\delta^{4-\sigma}  + \delta^{2(2-\sigma)} k^2 \big) \frac{1+  (\delta^{4-\sigma}  + \delta^{2(2-\sigma)} k^{2} ) \frac{1}{\varepsilon^3} + (h^2 k^{-2} + h^2 \delta^{-\sigma} ) \frac{1}{\varepsilon} }{\varepsilon^{4-\sigma}} \\[0.2cm]
&  +  C \big(h^2 k^{-2} + h^{2} \delta^{-\sigma}\big) \frac{1+  (\delta^{4-\sigma}  + \delta^{2(2-\sigma)} k^{2} ) \frac{1}{\varepsilon^3} + (h^2 k^{-2} + h^2 \delta^{-\sigma} ) \frac{1}{\varepsilon} }{\varepsilon^{2-\sigma}} .
 \end{align*}
As in the proof of the upper bound, we now take $ k^2 = O\big(\delta^{\sigma} \big) $ so that
\begin{align*}
u_h-u \leq &\, K\Big(\varepsilon^\sigma +\delta^{2-\sigma}\varepsilon^{2(\sigma-1)} + \delta + h^2\delta^{-1} \Big) \\
 & \quad + C  \Big( \frac{h^2\delta^{-\sigma}}{\varepsilon^{2-\sigma}} +\frac{h^4\delta^{-2\sigma}}{\varepsilon^{3-\sigma}} + \frac{\delta^{4-\sigma}}{\varepsilon^{4-\sigma}} +2 \frac{h^2\delta^{4-2\sigma}}{\varepsilon^{5-\sigma}} + \frac{\delta^{8-2\sigma}}{\varepsilon^{7-\sigma}}\Big)\\
 =& A_1 + A_2.
\end{align*}
To continue note we can factor the second term,
$$A_2=C\frac1{\varepsilon^{1-\sigma}}\Big(\frac{h^2}{\varepsilon\delta^\sigma}+\frac{\delta^{4-\sigma}}{\varepsilon^{3}}\Big)\Big(1+\frac{h^2}{\varepsilon\delta^{\sigma}}+ \frac{\delta^{4-\sigma}}{\varepsilon^3}\Big).$$ 
Taking $\delta = O\big(h^\frac{1}{2}\varepsilon^\frac{1}{2}\big)$ as in the upper bound, we balance terms in $A_2$, and $A_2=\frac1{\varepsilon^{1-\sigma}}a(1+a)$ for $a^2=O(\frac{h^{4-\sigma}}{\varepsilon^{2+\sigma}})$. Finally (as for the upper bound) we take
 $\varepsilon= O\big(h^{\frac{4-\sigma}{4+\sigma}} \big)$. Then it is easy to check (for $h<1$) that $a=O(\varepsilon)$ and 
 $$A_2\leq O\Big(\frac a{\varepsilon^{1-\sigma}}\Big)=O(\varepsilon^\sigma)=O(h^{\frac{\sigma(4-\sigma)}{4+\sigma}}).$$
 In the remaining $A_1$ term, using $h\leq\delta\leq\varepsilon$ to estimate the 2nd and 4th terms, and a direct computation for the $\delta$-term, we find that the 2nd and 4th terms are $O(\varepsilon^\sigma)$ and $O(h)$, while $\delta=O(h^{\frac4{4+\sigma}})$. Since $\frac{\sigma(4-\sigma)}{4+\sigma}\leq \frac4{4+\sigma}\leq 1$, we conclude that
 \begin{align*}
u_h-u \leq C  h^{\frac{\sigma(4-\sigma)}{4+\sigma}}. 
\end{align*}	
This completes the proof of the theorem. \hspace{5.7cm} $\square$

\section{Proof of error bound for powers of discrete Laplacian } \label{sec:prf:flap}
We start with an analogous (uniform in $h$) bound as in Theorem \ref{thm:reg:approx:wdeg:nonlocal}.
\begin{thm}\label{thm:frac-bound-wdeg-flap}
Assume \ref{A1}-\ref{A5}, \ref{C1}-\ref{C3}, and $u_{h}$ solves \eqref{eqn:fraclap:aprrox}. Then there is $K>0$ independent of $h$ such that 
$$\|(-\Delta_h)^{\frac{\sigma}{2}}[u_h]\|_0 \leq K.$$
\end{thm}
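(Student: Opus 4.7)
The plan is to imitate closely the proof of Theorem \ref{thm:reg:wdeg:nonlocal}, working with the discrete operator $L_h := (-\Delta_h)^{\sigma/2}$ in place of the continuous $\J^r$. Write $w_h := L_h u_h$. For each fixed $h$, $L_h$ is a bounded operator on $\ell^\infty$, so $w_h$ is bounded (non-uniformly in $h$), which is enough to justify the formal sums that appear below. I would then split the proof into an easy upper bound and a more delicate lower bound.

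First, for the upper bound I would just plug $\A=\A_0$ into the scheme \eqref{eqn:fraclap:aprrox}: since $a^{\A_0}>0$ by \ref{C1} and $\|u_h\|_0$ is bounded uniformly in $h$ by Theorem \ref{thm:epprx_exist1}(c),
$$w_h(x)\leq \frac{\lambda\|u_h\|_0+\|f^{\A_0}\|_0}{a^{\A_0}}=:C_1,$$
with $C_1$ independent of $h$. This gives the upper bound immediately from weak non-degeneracy, without any quadrature/regularization step.

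For the lower bound I would mimic Step~1 of the proof of Theorem \ref{thm:reg:wdeg:nonlocal} verbatim in the discrete setting. Fix $x$; for each $\epsilon>0$ pick $\bar\A\in\mathcal A$ so that $\lambda u_h(x)+f^{\bar\A}(x)+a^{\bar\A}w_h(x)\geq -\epsilon$, contrast with the inequality ``$\leq 0$'' at $x+x_\bj$ for any $\bj\neq 0$, subtract, multiply by the nonnegative kernel weight $\kappa_{h,\bj}$ of $-L_h$, and sum over $\bj\neq 0$. Using linearity of $L_h$ together with $-L_h \phi(x) =\sum_\bj(\phi(x+x_\bj)-\phi(x))\kappa_{h,\bj}$, the result collapses to
$$-\lambda w_h(x)-L_h f^{\bar\A}(x)-a^{\bar\A}L_h w_h(x)\leq \epsilon\sum_\bj \kappa_{h,\bj}.$$
Bounding the left-hand side from below by the sup over $\A$ and sending $\epsilon\to 0$ (legitimate since $\sum_\bj\kappa_{h,\bj}<\infty$ for fixed $h$, as $L_h$ is bounded on $\ell^\infty$), I obtain the supersolution inequality
$$\lambda w_h(x) + \sup_{\A\in\mathcal A}\bigl\{L_h f^{\A}(x) + a^{\A} L_h w_h(x)\bigr\}\geq 0.$$
The constant $\psi\equiv -C_2/\lambda$, with $C_2:=\sup_{\A,h}\|L_h f^{\A}\|_0$, satisfies $L_h\psi=0$ and is therefore a subsolution of the same HJB equation; the comparison principle for schemes of the form \eqref{eqn:fraclap:aprrox} (Theorem \ref{thm:epprx_exist1}, applied with $f^{\A}$ replaced by $L_h f^{\A}$) then gives $w_h\geq -C_2/\lambda$. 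Combined with the upper bound this yields $\|w_h\|_0\leq \max(C_1,C_2/\lambda)$, independent of $h$.

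The hard part will be establishing the uniform-in-$h$ bound $C_2<\infty$ under \ref{C3} (which gives $f^\A\in C^{1,\beta}$ with $\beta>(\sigma-1)^+$). Using the symmetric second-difference representation of $L_h$ and the known asymptotics of $\kappa_{h,\bj}$ from Section~4.5 of \cite{EJT18b} (informally, $\kappa_{h,\bj}\leq Ch^{-\sigma}$ for $|x_\bj|\leq Ch$ and $\kappa_{h,\bj}\leq Ch^N|x_\bj|^{-N-\sigma}$ for $|x_\bj|\geq Ch$), the estimate splits into a near-origin piece controlled by $Ch^{1+\beta-\sigma}\|f^\A\|_{1,\beta}$ (bounded since $\beta\geq\sigma-1$) and a tail given by a Riemann sum of $\min(|z|^{1+\beta},1)|z|^{-N-\sigma}$, finite precisely because $1+\beta>\sigma$. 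This is the discrete analogue of the elementary fact that $(-\Delta)^{\sigma/2}$ maps $C^{1,\beta}_b$ into $L^\infty$ when $1+\beta>\sigma$, but it requires care with the discrete kernel and is the only genuinely technical step in the argument.
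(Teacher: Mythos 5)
Your proposal is correct and is essentially the proof the paper intends but omits: the paper points to the argument of Theorem \ref{thm:reg:approx:wdeg:nonlocal} (``but simpler since we have no diffusion correction term''), whose two halves---a direct bound from the scheme \eqref{eqn:fraclap:aprrox} using $a^{\A_0}>0$, and a supersolution-plus-comparison argument with data $(-\Delta_h)^{\frac{\sigma}{2}}f^{\A}$---are exactly what you carry out, with the weak-nondegeneracy subtraction trick of part (ii) rightly trivializing here because every operator is the scalar multiple $a^{\A}(-\Delta_h)^{\frac{\sigma}{2}}$ of a single operator. Your one genuinely technical step, the uniform-in-$h$ bound $\sup_{\A,h}\|(-\Delta_h)^{\frac{\sigma}{2}}f^{\A}\|_0\leq C\|f^{\A}\|_{1,\beta}$ (the analogue of the constant $C_1$ in the proof of Theorem \ref{thm:reg:approx:wdeg:nonlocal}), holds as you sketch via the kernel decay; it also follows more cleanly from the semigroup formula \eqref{discrete_fraclap}, since by symmetry of the discrete heat kernel and the $h$-independent second moment of the associated continuous-time random walk one gets $\|e^{t\Delta_h}f^{\A}-f^{\A}\|_0\leq C\min\big(\|f^{\A}\|_0,\,t^{\frac{1+\beta}{2}}\|f^{\A}\|_{1,\beta}\big)$, which is integrable against $t^{-1-\frac{\sigma}{2}}\,dt$ precisely because $1+\beta>\sigma$ by \ref{C3}.
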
  
We omit the proof which is similar to the proof of Theorem \ref{thm:reg:approx:wdeg:nonlocal}, but simpler since we have no diffusion correction term in the approximation this time.  Next we state the analogous results to Lemmas \ref{lem:fracbd:conv:nonlocal} and \ref{lem:bd:der:numsol_reg:wdeg:nonlocal} for regularisations $u_h^{[\epsilon]}(x)$  by the fractional heat semigroup defined in \eqref{veps2}.
\begin{lem} \label{lem:reg:numsol_reg:wdeg:flap}
	Assume $\sigma>1$, \ref{A1}-\ref{A7}, \ref{C1}-\ref{C3}, $u_h$ solves \eqref{eqn:fraclap:aprrox}, and $u_h^{[\varepsilon]}$ is defined in \eqref{eqn:fraclap:aprrox}.
	Then there is $K>0$ independent of $h$ and $\varepsilon$ such that
	\begin{align} \label{reg:numsol_reg:wdeg:flap}
	\|u_h^{[\varepsilon]}\|_{1,\sigma-1} \leq K \Big( 1+ \frac{h^2}{\varepsilon^3}\Big), 
	\end{align}
and for $m\geq 2$,
$$\|D^m u_h^{[\varepsilon]}\|_0 \leq  \frac{K}{\varepsilon^{m-\sigma}} \Big(1+ \frac{h^2}{\varepsilon^3}\Big).$$
\end{lem}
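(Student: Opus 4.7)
The plan is to follow exactly the same pattern as the proofs of Lemmas \ref{lem:fracbd:conv:nonlocal} and \ref{lem:bd:der:numsol_reg:wdeg:nonlocal}, but the fractional Laplacian setting is considerably simpler since there is no drift $\eta^{\A}$ and no diffusion-corrected local term $\mathcal L^{\A}_{\delta,k,h}$ to track. The overall strategy is: transfer the uniform discrete bound from Theorem \ref{thm:frac-bound-wdeg-flap} to a bound on $(-\Delta)^{\sigma/2}[u_h^{[\varepsilon]}]$ via the truncation error \eqref{fraclap_trunc}, then invoke the elliptic regularity result of Ros-Oton--Serra (as in Theorem \ref{thm:regularity:vis_soln:wdeg:nonlocal}) to upgrade to a $C^{1,\sigma-1}$ bound. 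The higher-order derivative estimates then follow from the fractional heat kernel smoothing bounds of Lemma \ref{lem:appndx_2}.

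First I would use that $(-\Delta_h)^{\sigma/2}$ is translation invariant (a fixed-coefficient quadrature) and hence commutes with convolution against $\tilde K^{\sigma}(\varepsilon^{\sigma},\cdot)$, so Theorem \ref{thm:frac-bound-wdeg-flap} yields $\|(-\Delta_h)^{\sigma/2}[u_h^{[\varepsilon]}]\|_0\leq K$ independently of $h$ and $\varepsilon$. Next, by the local truncation bound \eqref{fraclap_trunc} applied to the smooth function $u_h^{[\varepsilon]}$,
\begin{align*}
\|(-\Delta)^{\sigma/2}[u_h^{[\varepsilon]}]\|_0 \leq \|(-\Delta_h)^{\sigma/2}[u_h^{[\varepsilon]}]\|_0 + C h^2 \big(\|D^4 u_h^{[\varepsilon]}\|_0 + \|u_h^{[\varepsilon]}\|_0\big).
\end{align*}
A Lipschitz bound $\|u_h\|_{0,1}\leq C$ uniform in $h$ follows by the standard comparison argument on the monotone scheme (cf. Proposition \ref{thm:viscosity_exist}(c) and Theorem \ref{thm:epprx_exist1}(b)). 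Combining this with the crude first estimate of Lemma \ref{lem:appndx_2} gives $\|D^4 u_h^{[\varepsilon]}\|_0\leq C\|u_h\|_{0,1}/\varepsilon^{3}$, so that
\begin{align*}
\|(-\Delta)^{\sigma/2}[u_h^{[\varepsilon]}]\|_0 \leq K\Big(1+\tfrac{h^2}{\varepsilon^3}\Big).
\end{align*}

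Since $u_h^{[\varepsilon]}$ is bounded with $(-\Delta)^{\sigma/2}[u_h^{[\varepsilon]}]\in L^{\infty}(\rn)$, Theorem 1.1(a) of \cite{Oton2016} yields
\begin{align*}
\|u_h^{[\varepsilon]}\|_{1,\sigma-1}\leq K\big(\|u_h^{[\varepsilon]}\|_0+\|(-\Delta)^{\sigma/2}[u_h^{[\varepsilon]}]\|_0\big)\leq K\Big(1+\tfrac{h^2}{\varepsilon^3}\Big),
\end{align*}
which is \eqref{reg:numsol_reg:wdeg:flap}. The second bound follows by applying the second estimate of Lemma \ref{lem:appndx_2} with $\varepsilon_1=\varepsilon/2^{1/\sigma}$ and inserting the just-proved $C^{1,\sigma-1}$ bound at scale $\varepsilon_1\sim\varepsilon$:
\begin{align*}
\|D^m u_h^{[\varepsilon]}\|_0 \leq \frac{C}{\varepsilon^{m-\sigma}}\|u_h^{[\varepsilon_1]}\|_{1,\sigma-1}\leq \frac{K}{\varepsilon^{m-\sigma}}\Big(1+\tfrac{h^2}{\varepsilon^3}\Big).
\end{align*}

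The main technical points to verify carefully are (i) the uniform-in-$h$ Lipschitz regularity of $u_h$, which is routine but needed to bootstrap the crude $D^4$ bound, and (ii) the fact that convolution against the fractional heat kernel commutes with the discrete operator $(-\Delta_h)^{\sigma/2}$ and does not inflate the $L^\infty$ norm of $(-\Delta_h)^{\sigma/2}[u_h]$. Neither is hard, but both should be stated explicitly since they are what allows the $h$-dependence in the final estimates to be reduced cleanly to the single factor $h^2/\varepsilon^3$ coming from the truncation error \eqref{fraclap_trunc}.
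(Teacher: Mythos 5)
Your proposal is correct and follows essentially the same route as the paper's proof: commute $(-\Delta_h)^{\frac{\sigma}{2}}$ with the fractional heat-kernel convolution to transfer the uniform bound of Theorem \ref{thm:frac-bound-wdeg-flap}, pass to $(-\Delta)^{\frac{\sigma}{2}}[u_h^{[\varepsilon]}]$ via \eqref{fraclap_trunc}, control $\|D^4 u_h^{[\varepsilon]}\|_0$ by the crude first estimate of Lemma \ref{lem:appndx_2}, apply \cite[Theorem 1.1(a)]{Oton2016}, and then get the $D^m$ bounds from the second estimate of Lemma \ref{lem:appndx_2} at scale $\varepsilon_1$. Your explicit flagging of the uniform Lipschitz bound on $u_h$ and of the commutation step simply makes precise what the paper leaves implicit.
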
 

\begin{proof}
By Theorem \ref{thm:frac-bound-wdeg-flap} and
properties of $\tilde{K}^{\sigma}$, $\|(-\Delta_h)^{\frac{\sigma}{2}}[u_h^{[\varepsilon]}]\|_0\leq C\|(-\Delta_h)^{\frac{\sigma}{2}}[u_h]\|_0\leq CK$, and we conclude from the truncation error bound \eqref{fraclap_trunc} that
\begin{align}\label{imprpved:flap:bd:reg}
\|(-\Delta)^{\frac{\sigma}{2}}u_h^{[\varepsilon]}\|_0 \leq K_1\Big( \|(-\Delta_h)^{\frac{\sigma}{2}}[u_h] \|_0 + h^2 (\|D^4u_h^{[\varepsilon]}\|_0 + \|u_h^{[\varepsilon]}\|_0)\Big). 
\end{align}
Since $\|D^m u^{(\varepsilon)}_h\|_0 \leq \frac{C}{\varepsilon^{m-1}}\|u_h\|_{0,1}$ by Lemma \ref{lem:appndx_2}, estimate  \eqref{reg:numsol_reg:wdeg:flap} follows from the regularity estimate \cite[Theorem 1.1(a)]{Oton2016} by Ros-Oton and Serra for fractional Laplace operators. The second part follows from \eqref{reg:numsol_reg:wdeg:flap} and Lemma \ref{lem:appndx_2}.
\end{proof}
We give a version of Lemma \ref{thm:regularization_bd:wdeg:nonlocal} for powers of the discrete fractional Laplacian.
\begin{lem}\label{thm:imprvdreg_bd:wdeg:flap}
Assume $\sigma>1$, $0<h\leq  \epsilon^{\frac{4-\sigma}2}$, \ref{A1}-\ref{A5}, \ref{C1}-\ref{C3}, $u_h$ solves \eqref{eqn:fraclap:aprrox}, and $u_h^{[\varepsilon]}$ is defined in \eqref{veps2}. Then
\begin{align}
\|u_h^{[\varepsilon]} -u_h\|_0 \leq K \Big(\varepsilon^{\sigma}\|(-\Delta_h)^{\frac{\sigma}{2}}[u_h]\|_0 + h^{\frac{2}{4-\sigma}} \|u_h\|_{0,1}\Big).
\end{align}
\end{lem}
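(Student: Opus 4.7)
The plan is to mimic the proof of Lemma \ref{thm:regularization_bd:wdeg:nonlocal}, but using the improved estimate \eqref{imprpved:flap:bd:reg} obtained from the quadratic truncation error of the discrete fractional Laplacian \eqref{fraclap_trunc}. Let $S_t$ denote the fractional heat semigroup, so that $u_h^{[\varepsilon]}=S_{\varepsilon^\sigma}[u_h]$ and $\partial_r S_r[u_h]=-(-\Delta)^{\sigma/2}[S_r[u_h]]$. For a free parameter $s\in(0,\varepsilon^\sigma]$ to be chosen, I split
\[
 u_h^{[\varepsilon]}-u_h \;=\; \big(S_{\varepsilon^\sigma}[u_h]-S_s[u_h]\big)+\big(S_s[u_h]-u_h\big),
\]
and control the last term by the classical bound $\|S_s[u_h]-u_h\|_0\le Cs^{1/\sigma}\|u_h\|_{0,1}$.

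For the first term, I integrate in $r\in[s,\varepsilon^\sigma]$ and apply \eqref{imprpved:flap:bd:reg} to $u_h^{[r^{1/\sigma}]}=S_r[u_h]$ in place of $u_h^{[\varepsilon]}$, noting $\|(-\Delta_h)^{\sigma/2}[S_r[u_h]]\|_0\le\|(-\Delta_h)^{\sigma/2}[u_h]\|_0$ by commutation with convolution, and $\|S_r[u_h]\|_0\le\|u_h\|_0\le\|u_h\|_{0,1}$. The key derivative bound comes from the first inequality in Lemma \ref{lem:appndx_2}, which gives $\|D^4 S_r[u_h]\|_0\le C r^{-3/\sigma}\|u_h\|_{0,1}$. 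Assembling these,
\[
\big\|(-\Delta)^{\sigma/2}[S_r[u_h]]\big\|_0 \;\le\; K_1\big(\|(-\Delta_h)^{\sigma/2}[u_h]\|_0 + h^2 r^{-3/\sigma}\|u_h\|_{0,1} + h^2\|u_h\|_{0,1}\big).
\]
Integrating over $[s,\varepsilon^\sigma]$ and using $3/\sigma>1$ so that $\int_s^{\varepsilon^\sigma}r^{-3/\sigma}dr\le C\,s^{1-3/\sigma}$, yields
\[
\big\|S_{\varepsilon^\sigma}[u_h]-S_s[u_h]\big\|_0 \;\le\; K\Big(\varepsilon^\sigma\|(-\Delta_h)^{\sigma/2}[u_h]\|_0 + h^2 s^{1-3/\sigma}\|u_h\|_{0,1}\Big),
\]
where $h^2\varepsilon^\sigma\|u_h\|_{0,1}$ is absorbed into the first term since $h\le1$.

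Combining with the endpoint bound, I get
\[
\|u_h^{[\varepsilon]}-u_h\|_0 \;\le\; K\Big(\varepsilon^\sigma\|(-\Delta_h)^{\sigma/2}[u_h]\|_0 + \big(h^2 s^{1-3/\sigma}+s^{1/\sigma}\big)\|u_h\|_{0,1}\Big).
\]
The $s$-dependent factor is optimized by balancing $h^2 s^{1-3/\sigma}=s^{1/\sigma}$, i.e.\ $s^{(4-\sigma)/\sigma}=h^2$, giving $s=h^{2\sigma/(4-\sigma)}$ and $s^{1/\sigma}=h^{2/(4-\sigma)}$. The hypothesis $h\le\varepsilon^{(4-\sigma)/2}$ is exactly what is needed to guarantee $s\le\varepsilon^\sigma$, so the splitting is legitimate. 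This produces the stated estimate.

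The main delicate point is the choice of $s$: one must carefully track the power $r^{-3/\sigma}$ coming from Lemma \ref{lem:appndx_2} and check that it is integrable only down to a positive $s$, forcing the interpolation with $\|S_s[u_h]-u_h\|_0\le Cs^{1/\sigma}\|u_h\|_{0,1}$. Everything else is bookkeeping with the smoothing estimates of Section \ref{subsec:frac-heat-ker} and the uniform a priori bound of Theorem \ref{thm:frac-bound-wdeg-flap}.
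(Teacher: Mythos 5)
Your proposal is correct and takes essentially the same route as the paper's proof: split $u_h^{[\varepsilon]}-u_h=(S_{\varepsilon^\sigma}-S_s)[u_h]+(S_s[u_h]-u_h)$, integrate $\partial_r S_r[u_h]=-(-\Delta)^{\frac{\sigma}{2}}S_r[u_h]$ over $[s,\varepsilon^\sigma]$ using \eqref{imprpved:flap:bd:reg} together with $\|D^4 S_r[u_h]\|_0\le C r^{-3/\sigma}\|u_h\|_{0,1}$ from Lemma \ref{lem:appndx_2}, bound the endpoint by $Cs^{1/\sigma}\|u_h\|_{0,1}$, and balance to $s=h^{\frac{2\sigma}{4-\sigma}}$ — exactly the paper's choice, with $h\le\varepsilon^{\frac{4-\sigma}{2}}$ ensuring $s\le\varepsilon^\sigma$. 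One cosmetic slip: the leftover $h^2\varepsilon^\sigma\|u_h\|_{0,1}$ cannot be absorbed into $\varepsilon^\sigma\|(-\Delta_h)^{\frac{\sigma}{2}}[u_h]\|_0$ as you state (these norms are unrelated), but it is dominated by $h^{\frac{2}{4-\sigma}}\|u_h\|_{0,1}$ since $h\le1$ and $2\ge\frac{2}{4-\sigma}$, which is how the paper implicitly disposes of it.
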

\begin{proof}
The proof is similar to the proof of Lemma \ref{thm:regularization_bd:wdeg:nonlocal}. By definition \eqref{veps2}, $u_h^{[\varepsilon]}=S_r(u_h)$ where $\varepsilon=r^{\frac1\sigma}$ and $S_t$ is the fractional heat semigroup. Therefore using properties of heat kernels, estimate \eqref{imprpved:flap:bd:reg}, and the first part of Lemma \ref{lem:appndx_2}, we have
\begin{align*}
&|S_t(u_h) - S_s(u_h)| \leq \int_s^t K\Big( \|(-\Delta_h)^{\frac{\sigma}{2}}u_h\|_0 +\frac{h^2}{r^{\frac{3}{\sigma}}} \|u_h\|_{0,1}\Big) \, dr  \\
& \quad\leq K(t-s)\|(-\Delta_h)^{\frac{\sigma}{2}}u_h\|_0 + K h^2 \|u_h\|_{0,1} \Big(  \frac{1}{s^{\frac{3-\sigma}{\sigma}}} - \frac{1}{t^{\frac{3-\sigma}{\sigma}}} \Big),\\[0.2cm]
&|S_s(u_h) -u_h| = \Big|\int_{\rn}\Big(u_h(x-s^{\frac{1}{\sigma}} y) -u_h(x)\Big) K^{\sigma}(y) \, dy\Big| \leq  K  s^{\frac{1}{\sigma}} \|u_h\|_{0,1}.
\end{align*}
Moreover, 
\begin{align*}
|S_t(u_h) -u_h| \leq K\Big( t \, \|(-\Delta_h)^{\frac{\sigma}{2}}u_h\|_0 + \Big(\frac{h^2}{s^{\frac{3-\sigma}{\sigma}}} +  s^{\frac{1}{\sigma}} \Big)\|u_h\|_{0,1} \Big).
\end{align*}
The result now follows by taking $t= \varepsilon^{\sigma}$ and $s = h^{\frac{2\sigma}{4-\sigma}}$, noting that $s\leq t$ by the assumption that $h\leq \varepsilon^{\frac{4-\sigma}2}$.  
\end{proof}

\subsubsection*{Proof of Theorem \ref{thm:err_bd:flap:wdeg}}
 The case $\sigma<1$ uses no more than Lipschitz continuity of solutions and follows in straight forward way from the local truncation error bound in Lemma \ref{lem:trun_err_fraclap} and the regularisation/comparison arguments in the proof of Theorem \ref{thm:estimate_sym_main}. Therefore we focus on the case $\sigma>1$. The arguments are same as in the proof of Theorem \ref{thm:er_bound:wdeg}. 
To prove the upper bound on $u-u_h$, we  regularize equation \eqref{eqn:main3_wdeg} and use the truncation error bound \eqref{fraclap_trunc} to find that
\begin{align*}
&\lambda \, u^{(\epsilon)} + \sup_{\A \in \mathcal{A}} \left\{ f^{\A}(x) + a^{\A} (-\Delta_h)^{\frac{\sigma}{2}} u^{(\epsilon)}  \right\} \\
& \leq   \sup_{\A \in \mathcal{A}} \|f^{\A} - (f^{\A})^{(\epsilon)}\|_0 +  \sup_{\A \in \mathcal{A}} a^{\A} \|(-\Delta_h)^{\frac{\sigma}{2}} u^{(\epsilon)} - (-\Delta)^{\frac{\sigma}{2}} u^{(\epsilon)}\|_0\\
& \leq  K \epsilon^\sigma + C h^2 \Big(\|D^4 u^{(\epsilon)} \|_0 + \|u^{(\epsilon)}\|_0 \Big) .
\end{align*}
Hence $u^{(\epsilon)} - \frac{C}{\lambda}\big( \epsilon^\sigma +  h^2 (\|D^4 u^{(\epsilon)} \|_0 + \|u^{(\epsilon)}\|_0 )\big)$ is a subsolution of equation \eqref{eqn:fraclap:aprrox}. 
By  the comparison principle for \eqref{eqn:fraclap:aprrox}, regularity of $u$ given by Theorem \ref{thm:regularity:vis_soln:wdeg:nonlocal}, and the bounds given by Lemma  \ref{lem:dif:conv}, we have 
\begin{align*}
u(x) - u_h(x) \leq K \Big( \epsilon^{\sigma} + \frac{h^2}{\epsilon^{4-\sigma }} \Big). 
\end{align*}  
We optimize the right hand side by choosing $\epsilon = O\big(h^{\frac{1}{2}}\big)$ and get
\begin{align*}
u(x) - u_h(x) \leq K h^{\frac{\sigma}{2}}.\\
\end{align*}

To prove the lower bound we mollify/regularize the scheme \eqref{eqn:fraclap:aprrox} using the fractional heat semigroup. Then by Lemma \ref{lem:appndx_1} for $f^{\A}$ and the truncation error \eqref{fraclap_trunc}, 
\begin{align*}
 \lambda \, u_h^{[\epsilon]} + \sup_{\A \in \mathcal{A}} \left\{ f^{\A}(x) + a^{\A}(-\Delta)^{\frac{\sigma}{2}} u_h^{[\epsilon]}  \right\} \leq C\epsilon^{\sigma} + C h^2 \Big(\|D^4 u_h^{[\epsilon]} \|_0 + \|u_h^{[\epsilon]}\|_0 \Big) . 
 \end{align*}
Therefore $u_h^{[\epsilon]} - \frac{C}{\lambda}\big(\epsilon^{\sigma} +  h^2\|D^4 u^{[\epsilon]} \|_0 + h^2\|u^{[\epsilon]}\|_0 \big)$ is a subsolution of equation \eqref{eqn:main3_wdeg}, and comparison for \eqref{eqn:main3_wdeg} then yields
$$u_h^{[\epsilon]} - u \leq  \frac{C}{\lambda}\Big(\epsilon^{\sigma} +  h^2\|D^4 u_h^{[\epsilon]} \|_0 + h^2\|u_h^{[\epsilon]}\|_0 \Big).  $$ 
Then by Lemma
\ref{thm:imprvdreg_bd:wdeg:flap} (needs $h\leq \varepsilon^{\frac{4-\sigma}{2}}$) and the $\|D^4 u_h^{[\epsilon]}\|_0$-bound of Lemma
\ref{lem:reg:numsol_reg:wdeg:flap},  
\begin{align*}
u_h - u & \leq  
C\Big(\epsilon^{\sigma}  + h^{\frac{2}{4-\sigma}} +  \frac{h^2}{\epsilon^{4-\sigma}} +  \frac{h^4}{\epsilon^{7-\sigma}} \Big) .
\end{align*}
Optimizing in $\epsilon$ by choosing $\epsilon= O \big(h^{\frac{1}{2}}\big)$, we get the final estimate 
$$u_h-u \leq K \big( h^{\frac{\sigma}{2}} + h^{\frac{2}{4-\sigma}}\big). $$
The result now follows since $\frac{2}{4-\sigma}> \frac{\sigma}{2}$ for $\sigma>1$ and $h=\varepsilon^2\leq \varepsilon^{\frac{2}{4-\sigma}}$ (for $h<1$).  \hfill $\square$
 
\section{Extensions} \label{sec:exten}

In this section we discuss two related extensions of our previous results: (i) to nonlocal HJB equations with drift/advection terms, and 
(ii) to jump diffusions with nonsymmetric singular parts in the sense that we drop condition \ref{A7}. Consider
\begin{align}\label{eqn:main4}
 &  \sup_{\A\in\mathcal{A}} \big\{ f^{\A}(x) +c^{\A}(x) u(x)  - b^{\A} \cdot \grad u(t,x)- \I^{\A}[u](x) \big\}  = 0,  \mbox{\quad in} \ \rn, 
 \end{align}
where $b^{\A}\in \rn$ and a modified version of \ref{A2} holds:
\begin{Assumptions1}
  \setcounter{enumi}{1}
\item\label{A2'} There is a $K>0$ such that 
$$  \|f^{\A}\|_{1}+ \|c^{\A}\|_{1} + |b^\A|+\|\eta^{\A}\|_0 \leq
  K\quad\text{for}\quad \A\in \mathcal{A}. $$  
\end{Assumptions1}
Under assumptions \ref{A1}, \ref{A2'}, \ref{A3}, \ref{A4} equation
\eqref{eqn:main4} is well-posed, comparison holds, and the $C_b$ and
Lipschitz bounds of Proposition \ref{thm:viscosity_exist} hold. The
proof is the same as for Proposition \ref{thm:viscosity_exist}.
Note that $x$-independent $b^\A$ is consistent with $x$-independent $\eta^\A$ in \eqref{n_local:term} and simplifies the
presentation below. 

Dropping \ref{A7} means that
$$\tilde{b}^{\A, \delta}:=\int_{\delta<|z|<1} \eta^{\A}(z) \, \nu_{\A}(dz) \neq 0,$$ 
and there is
a new drift term in our equation. We can write the nonlocal term as
\begin{align*}
\I^{\A}[\phi](x) = \I^{\A}_{\delta} [\phi](x) + \I^{\A,\delta}[\phi](x) - \tilde{b}^{\A, \delta}\cdot \nabla \phi(x), 
\end{align*}
where 
$\I^{\A}_{\delta} $, $\I^{\A,\delta}$ are defined in Section \ref{monotone scheme}. The term $\tilde{b}^{\A, \delta}$ is bounded under a $C^{1,1}$ condition for $\eta^\A$ at $z=0$, a uniform in $\A$ version of assumption \ref{C1} (ii): \smallskip
\begin{Assumptions}
\setcounter{enumi}{7}
\item \label{A8} There is $K>0$ such that  
$$|\eta^{\A}(z) - 2\eta^{\A}(0) - \eta^{\A}(-z)|\leq K |z|^2 \qquad \mbox{for} \qquad |z|<1, \quad \A\in\mathcal A.$$
\end{Assumptions}
This assumption is satisfied in most applications. The next result is a version of Lemma \ref{lem:err_local_dif_sym_odd} without \ref{A7}.
\begin{lem}\label{lem:err_local_dif_sym_ex}
Assume \ref{A1}, \ref{A2'}, \ref{A3} - \ref{A6} and $\delta\in(0.1)$. 
\begin{itemize}
\item[(i)]There is $K>0$ independent of $\delta,\A,\phi$ such that  \begin{align}\label{condn:small-difu}
|\I^{\A}_{\delta}[\phi]- tr[a^{\A}_{\delta}D^2 \phi] | \leq K\delta^{3-\sigma} \|D^3\phi\|_0.
\end{align}
\item[(ii)] If also \ref{A8} holds, there is $C>0$ independent of $\delta,\A$  such that $|\tilde{b}^{\A,\delta}| \leq C$.
\end{itemize}
\end{lem}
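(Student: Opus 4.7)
The plan is to follow the pattern of Lemma \ref{lem:err_local_dif_sym_odd}, but with the simplification that, since we no longer have the odd-symmetry assumption \ref{A7} on $\eta^\A$, the Taylor expansion should stop one order earlier (at second order, with third-order remainder), losing exactly one order of $\delta$ compared with the symmetric case.

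For part (i), I would write
\begin{align*}
\phi(x+\eta^\A(z)) - \phi(x) - \eta^\A(z)\cdot\nabla\phi(x) = \tfrac{1}{2}\eta^\A(z)\cdot D^2\phi(x)\cdot \eta^\A(z)^T + R_3(x,z),
\end{align*}
with integral remainder satisfying $|R_3(x,z)| \leq C|\eta^\A(z)|^3\|D^3\phi\|_0$. Integrating against $\nu_\A(dz)$ on $|z|<\delta$, the quadratic term reproduces precisely $tr[a^\A_\delta D^2\phi(x)]$ via the identity $v\cdot M\cdot v^T = tr[M vv^T]$ and the definition of $a^\A_\delta$. For the remainder, \ref{A3} and \ref{A6} give
\begin{align*}
\int_{|z|<\delta}|\eta^\A(z)|^3 \nu_\A(dz) \leq K^3\int_{|z|<\delta}|z|^3\frac{dz}{|z|^{N+\sigma}} \leq C\delta^{3-\sigma},
\end{align*}
which is finite since $\sigma<2<3$, yielding the bound claimed in (i). Note that under \ref{A7} the cubic contribution could be integrated out exactly; without \ref{A7} one cannot do better than $\delta^{3-\sigma}$, which explains the change in scaling.

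For part (ii), the idea is to exploit the symmetry of $\nu_\A$ on $|z|<1$ from \ref{A5} and pair $z$ with $-z$:
\begin{align*}
\tilde b^{\A,\delta} = \tfrac{1}{2}\int_{\delta<|z|<1}\bigl(\eta^\A(z) + \eta^\A(-z)\bigr)\nu_\A(dz).
\end{align*}
Assumption \ref{A8} gives $\eta^\A(z)+\eta^\A(-z) = 2\eta^\A(0) + O(|z|^2)$, and continuity of $\eta^\A$ from \ref{A1} combined with the bound $|\eta^\A(z)| \leq K|z|$ from \ref{A3} forces $\eta^\A(0)=0$. Hence
\begin{align*}
|\tilde b^{\A,\delta}| \leq \tfrac{K}{2}\int_{\delta<|z|<1}|z|^2\,\nu_\A(dz),
\end{align*}
which is uniformly bounded in $\delta$ and $\A$ by \ref{A4}. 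The only (mild) conceptual obstacle here is recognizing that \ref{A5} together with \ref{A8} provides an approximate symmetry of $\eta^\A$ modulo an $O(|z|^2)$ error, which is exactly the amount of cancellation needed to keep $\tilde b^{\A,\delta}$ bounded as $\delta\to 0$ in the absence of \ref{A7}; otherwise the integral could blow up logarithmically or worse depending on $\sigma$.
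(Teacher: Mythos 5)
Your proof is correct and takes essentially the same route as the paper's: part (i) is the same second-order Taylor expansion with the integral remainder bounded by $C\delta^{3-\sigma}\|D^3\phi\|_0$ via \ref{A3} and \ref{A6}, and part (ii) is the same symmetrization of the integral under \ref{A5} combined with the $O(|z|^2)$ cancellation from \ref{A8} and $\eta^{\A}(0)=0$, finished off by \ref{A4}. The only trivial remark is that $\eta^{\A}(0)=0$ follows directly by evaluating \ref{A3} at $z=0$, so the continuity argument you invoke is not needed.
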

\begin{proof}
(i) \ The proof is similar to the proof of Lemma \ref{lem:err_local_dif_sym_odd}. After a Taylor expansion of $\phi$, we find that
\begin{align*}
\I^{\A}_{\delta} [\phi](x) = tr[a^{\A}_{\delta} D^2 \phi] + Err_{1,\delta}, 
\end{align*}
where $Err_{\delta}= \frac{|\beta|}{\beta!}\sum_{|\beta|=3}\big[\int_{|z|<\delta} \int_0^1(1-s)^{|\beta-1|} D^{\beta}\phi(x+s\eta^{\A}(z)) \eta^\A(z)^{\beta}\, ds\, \nu_{\A}(dz)\big] $ and $a^{\A}_{\delta}$ is defined in Lemma \ref{lem:err_local_dif_sym_odd}. By \ref{A6} we have $|Err_{1, \delta}|\leq C \delta^{3-\sigma}\|D^3\phi\|_0.$
\smallskip

\noindent (ii) Since $\eta^{\A}(0)=0$ by \ref{A3}, assumptions \ref{A5} and \ref{A8} lead to
\begin{align*}
&\big|\tilde{b}^{\A, \delta}\big| =\frac{1}{2} \Big|\int_{\delta<|z|<1} \big(\eta^{\A}(z)+ \eta^{\A}(-z)\big) \, \nu_{\A}(dz) \Big|  \leq \, K \int_{\delta<|z|<1}|z|^2 \, \nu_{\A}(dz).  
\end{align*}  
By \ref{A4}, this completes the proof.
\end{proof}

Following the approach of Section \ref{monotone
  scheme}, to discretize \eqref{eqn:main4} we first approximate small jumps by a diffusion. This leads to equation \eqref{eqn:main:apprx} with a redefined operator $\mathcal{L}_\delta^{\A}$ to
account for the drift:
\begin{align} \label{local_term_ex}
\mathcal{L}_\delta^{\A}[\phi](x) :=  tr[a^{\A}_{\delta} D^2 \phi](x) + b^{\A}_{\delta}\cdot \grad \phi(x), \qquad b^{\A}_{\delta}= b^{\A}-\tilde{b}^{\A, \delta},
\end{align}
where $b^{\A}_{\delta}$ is bounded under \ref{A2'} and \ref{A8}. Then we approximate $\mathcal{L}_\delta^{\A}$ by
 \begin{align}\label{local_discrete_ex}
\bar{\mathcal{L}}^{\A}_{\delta,k,h} \phi= \mathcal{L}^{\A}_{\delta,k,h} [\phi] + b_k^{\A,+} \, \delta_{h,e_k} \phi + b_k^{\A,-} \, \delta_{h,-e_k} \phi ,
\end{align}
where $\mathcal{L}^{\A}_{\delta,k,h} $ is defined in \eqref{local_epprox_term}, $e_{k}$ are basis vectors in $\rn$, 
$b^{\A}_{\delta}= (b^{\A}_1, \cdots , b^{\A}_N)$, and  
$$ \delta_{h,l}u(x)= \frac{u(x+hl)-u(x)}{h}\qquad\text{for}\qquad l\in\rn,\neq0.$$
Here the drift term is discretized by an upwind finite difference method\footnote{This is just an example, many other monotone discretizations would also work here, also SL schemes.}, and the total discretization is still monotone. We estimate the truncation error next.
\begin{lem}\label{lem:local_trunc_ex}
Assume \ref{A1}, \ref{A2'}, \ref{A3}-\ref{A6}, \ref{A8}, $\phi\in C^4(\rn)$, and $\mathcal{L}_\delta^{\A}$ and $\bar{\mathcal{L}}^{\A}_{\delta,k,h}$ are defined by \eqref{local_term_ex} and \eqref{local_discrete_ex}. 
%
Then there is $K$ independent of $h,k,\delta$ such that
 \begin{align}\label{local_trunc1_drift_ex}
 \big| \bar{\mathcal{L}}^{\A}_{\delta,k,h} [\phi] - \mathcal{L}_\delta^{\A} [\phi]  \big| \leq K \Big( h  \|D^2 \phi\|_0 +\delta^{2(2-\sigma)} k^2 \|D^4\phi\|_0 + \frac{h^2}{k^2} \|D^2 \phi\|_0\Big).  
\end{align} 
\end{lem}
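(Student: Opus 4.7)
\medskip
\noindent\textbf{Proof proposal.} The plan is to split the truncation error into a diffusion part and a drift part, apply the earlier Lemma~\ref{lem:local_trunc1} to the first, and treat the second by a standard upwind Taylor estimate. Concretely, I would write
\begin{align*}
\bar{\mathcal{L}}^{\A}_{\delta,k,h}[\phi] - \mathcal{L}_\delta^{\A}[\phi]
&= \big(\mathcal{L}^{\A}_{\delta,k,h}[\phi] - tr[a^{\A}_\delta D^2\phi]\big) \\
&\quad + \sum_{i=1}^N \Big( b^{\A,+}_i \, \delta_{h,e_i}\phi + b^{\A,-}_i \, \delta_{h,-e_i}\phi - b^{\A}_{\delta,i}\,\partial_{x_i}\phi \Big),
\end{align*}
where I use that $\mathcal{L}_\delta^{\A}[\phi]=tr[a^{\A}_\delta D^2\phi]+b^{\A}_\delta\cdot\nabla\phi$ by \eqref{local_term_ex}.

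For the diffusion bracket, Lemma~\ref{lem:local_trunc1} applies verbatim (note that its $\mathcal{L}_\delta^{\A}$ was defined as $tr[a^{\A}_\delta D^2\phi]$ prior to the redefinition \eqref{local_term_ex}) and gives
$$ \big|\mathcal{L}^{\A}_{\delta,k,h}[\phi] - tr[a^{\A}_\delta D^2\phi]\big| \leq K\Big(\delta^{2(2-\sigma)}k^2\|D^4\phi\|_0 + \frac{h^2}{k^2}\|D^2\phi\|_0\Big).$$

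For the drift bracket, I would write $b^{\A}_{\delta,i} = b^{\A,+}_i - b^{\A,-}_i$ with $b^{\A,\pm}_i\geq 0$ and recast each term as
$$ b^{\A,+}_i\big(\delta_{h,e_i}\phi - \partial_{x_i}\phi\big) + b^{\A,-}_i\big(\delta_{h,-e_i}\phi + \partial_{x_i}\phi\big),$$
and then bound each difference by $\tfrac{h}{2}\|D^2\phi\|_0$ via a one-dimensional second-order Taylor expansion with integral remainder. Since $b^{\A,+}_i+b^{\A,-}_i=|b^{\A}_{\delta,i}|$ and $\sum_i |b^{\A}_{\delta,i}|\leq \sqrt N\,|b^{\A}_\delta|$, summing over $i$ yields a bound of the form $C\,|b^{\A}_\delta|\,h\,\|D^2\phi\|_0$. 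Here $|b^{\A}_\delta|\leq |b^{\A}|+|\tilde b^{\A,\delta}|\leq K+C$ by assumption \ref{A2'} together with Lemma~\ref{lem:err_local_dif_sym_ex}(ii) (which uses \ref{A8}), giving the announced $h\,\|D^2\phi\|_0$ contribution.

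The proof is essentially bookkeeping once the decomposition above is made; no step is really hard. The only point requiring a little care is verifying that $|b^{\A}_\delta|$ is bounded uniformly in $\delta$ and $\A$, which is precisely the role of assumption \ref{A8} through Lemma~\ref{lem:err_local_dif_sym_ex}(ii). Adding the two bracket bounds produces \eqref{local_trunc1_drift_ex} and completes the proof.
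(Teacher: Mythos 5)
Your proposal is correct and takes essentially the same route as the paper: the paper's (one-line) proof likewise attributes the $h\,\|D^2\phi\|_0$ term to the classical upwind approximation of the drift and the remaining terms to Lemma~\ref{lem:local_trunc1}, with uniform boundedness of $b^{\A}_{\delta}$ coming from \ref{A2'} and Lemma~\ref{lem:err_local_dif_sym_ex}(ii) exactly as you argue. Your write-up simply makes explicit the decomposition and Taylor bookkeeping that the paper leaves implicit.
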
 
\begin{proof}The first term on the right hand side of  
\eqref{local_trunc1_drift_ex} is classical and due to the approximation of the drift. The remaining terms come from Lemma \ref{lem:local_trunc1}.
\end{proof}

The numerical scheme for equation \eqref{eqn:main4} is defined by 
\begin{align}\label{approx:eqn1_ex}
 \sup_{\A \in \mathcal{A}} \Big\{ f^{\A}(x) + c^{\A}(x)u(x)- \bar{\mathcal{L}}^{\A}_{\delta,k,h} [u](x) - \mathcal{I}_h^{\A, \delta}[u](x) \Big\} =0 \quad \text{in} \quad \rn, 
\end{align}  
 where $\bar{\mathcal{L}}^{\A}_{\delta,k,h}$ and $\mathcal{I}_h^{\A, \delta}$ are given by \eqref{local_discrete_ex} and \eqref{eq:discrete_nonlocal}. This is a consistent, monotone, and $L^\infty$-stable scheme. In the strongly degenerate case, an error estimate given by the next result.\ee
\begin{thm}\label{thm:estimate_sym_main_ex}
Assume $\sigma\in(0,2)$, $h,k\in(0,1)$, $\delta\geq h$, \ref{A1}, \ref{A2'}, \ref{A3}-\ref{A6}, \ref{A8}, $u$ and $u_h$ solves \eqref{eqn:main4} and  \eqref{approx:eqn1_ex}. 
\begin{itemize}
\item[(a)] 
If $k^2=O(\frac {h^2}{\delta^{2-\frac\sigma2}})$, then there is $C>0$ such that
\begin{align}\label{estimate_sym_main_gen_ex}
|u- u_{h}| \leq \left\{
       \begin{array}{ll}
      C \, h^{\frac{1}{2}} \quad &\mbox{for} \quad 0< \sigma \leq \frac{6}{5} \quad\text{and}\quad \delta= O \big(h^{\frac{1}{\sigma}}\big)\\[0.2cm]
      C \, h^{\frac{2(3-\sigma)}{6+ \sigma}} \quad &\mbox{for} \quad \frac{6}{5}< \sigma <2 \quad\text{and}\quad \delta= O \big(h^{\frac{6}{6+\sigma}}\big).
       \end{array}
  \right.
\end{align}
\item[(b)] When \ref{A7} holds and $k^2=O(\frac {h^2}{\delta^{2-\frac\sigma2}})$, then there is $C>0$ such that  
\begin{align}\label{estimate_sym_main_ex}
|u- u_{h}| \leq \left\{
       \begin{array}{ll}
      C h^{\frac{1}{2}} \quad &\mbox{for} \quad 0< \sigma \leq \frac{4}{3} \quad\text{and}\quad \delta= O \big(h^{\frac{1}{\sigma}}\big)\\[0.2cm]
      C h^{\frac{4-\sigma}{4+ \sigma}} \quad &\mbox{for} \quad \frac{4}{3}< \sigma <2 \quad\text{and}\quad \delta= O \big(h^{\frac{4}{4+\sigma}}\big).
       \end{array}
  \right.
\end{align}
\end{itemize}
\end{thm}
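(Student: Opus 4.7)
The plan is to adapt the proof of Theorem \ref{thm:estimate_sym_main} to the drift-augmented setting, using the modified truncation estimates of Lemmas \ref{lem:err_local_dif_sym_ex} and \ref{lem:local_trunc_ex} in place of Lemmas \ref{lem:err_local_dif_sym_odd} and \ref{lem:local_trunc1}. First I would note that wellposedness, $L^\infty$-stability, and comparison extend verbatim to \eqref{eqn:main4} and \eqref{approx:eqn1_ex}: the scheme retains positive coefficients (the upwind drift discretization in \eqref{local_discrete_ex} is built precisely for this), and Proposition \ref{thm:viscosity_exist}~(c) continues to give $\|u\|_0 + \|Du\|_0 \leq C$ under \ref{A2'}. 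Regularize by setting $u^{(\epsilon)} = u * \rho_\epsilon$ and $u_h^{(\epsilon)} = u_h * \rho_\epsilon$, which satisfy $\|D^m u^{(\epsilon)}\|_0 + \|D^m u_h^{(\epsilon)}\|_0 \leq C \epsilon^{1-m}$ for $m \geq 1$ by Lemma \ref{lem:dif:conv}.

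Combining Lemmas \ref{lem:err_local_dif_sym_ex}(i) (or \ref{lem:err_local_dif_sym_odd}(i) under \ref{A7}), \ref{lem:local_trunc_ex}, and \ref{lem:trunc_err2} produces, for $\phi \in C^4_b$,
\[
\big|(\I^\A + b^\A\!\cdot\!\grad)\phi - \bar{\mathcal{L}}^\A_{\delta,k,h}[\phi] - \I^{\A,\delta}_h[\phi]\big| \leq C\Big(\frac{\delta^{p-\sigma}}{\epsilon^{p-1}} + \frac{h}{\epsilon} + \frac{\delta^{2(2-\sigma)} k^2}{\epsilon^3} + \frac{h^2}{k^2 \epsilon} + \frac{h^2}{\delta^\sigma \epsilon}\Big) =: M_{\epsilon,\delta,k},
\]
with $p = 3$ in case (a) and $p = 4$ in case (b). The same reasoning as in the proof of Theorem \ref{thm:estimate_sym_main} then yields that $u^{(\epsilon)} - C\lambda^{-1}(\epsilon + M_{\epsilon,\delta,k})$ is a subsolution of \eqref{approx:eqn1_ex} and $u_h^{(\epsilon)} + C\lambda^{-1}(\epsilon + M_{\epsilon,\delta,k})$ is a supersolution of \eqref{eqn:main4}, so that two applications of comparison give $|u - u_h| \leq C(\epsilon + M_{\epsilon,\delta,k})$.

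It remains to optimize. Choosing $k^2 = h\epsilon/\delta^{2-\sigma}$ balances the two $k$-dependent terms, both becoming $h\delta^{2-\sigma}/\epsilon^2$, which is absorbed into $\delta^{p-\sigma}/\epsilon^{p-1}$ since $h \leq \delta$. Setting $\epsilon = h/\delta^{\sigma/2}$ (which rewrites the CFL as $k^2 = O(h^2/\delta^{2-\sigma/2})$) makes $h/\epsilon = \delta^{\sigma/2}$, and for the choices of $\delta$ below one checks $h \leq \delta^\sigma$ so that $h^2/(\delta^\sigma\epsilon) \leq h/\epsilon$. This reduces the error to $C(\delta^{\sigma/2} + h/\delta^{\sigma/2} + \delta^{p-\sigma}/\epsilon^{p-1})$. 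In case (b), balancing the last two terms with $p = 4$ gives $\delta = O(h^{4/(4+\sigma)})$ and rate $h^{(4-\sigma)/(4+\sigma)}$; this beats $h^{1/2}$ only when $\sigma > 4/3$, and for $\sigma \leq 4/3$ the $\delta^{\sigma/2}$ term dominates, forcing $\delta = O(h^{1/\sigma})$ and rate $h^{1/2}$. In case (a) the same analysis with $p = 3$ gives $\delta = O(h^{6/(6+\sigma)})$ with rate $h^{2(3-\sigma)/(6+\sigma)}$ for $\sigma > 6/5$, and $\delta = O(h^{1/\sigma})$ with rate $h^{1/2}$ for $\sigma \leq 6/5$.

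The main obstacle is the new $h/\epsilon$ contribution from the upwind discretization of the drift: this is the source of both the threshold values ($6/5$ in (a), $4/3$ in (b)) and the sub-optimality of the simple parameter choice $\epsilon = h/\delta^{\sigma/2}$ for small $\sigma$. Once one recognizes that this term forces a floor of $\delta^{\sigma/2}$ in the error, the bookkeeping reduces to comparing three monomials in $\delta$ and $h$, and the piecewise statement in \eqref{estimate_sym_main_gen_ex}--\eqref{estimate_sym_main_ex} follows.
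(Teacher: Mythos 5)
Your architecture is exactly the paper's: you inherit the two-sided mollification-plus-comparison argument of Theorem \ref{thm:estimate_sym_main}, substitute the truncation bounds of Lemmas \ref{lem:err_local_dif_sym_ex} and \ref{lem:local_trunc_ex}, and your $M_{\epsilon,\delta,k}$ with $p=3$ in case (a) and $p=4$ in case (b) is precisely the paper's \eqref{prf_err_M1_ex}, including the new $O(h/\epsilon)$ drift contribution from the upwind discretization. Your optimization is organized in a different (but equivalent) order: you fix $\epsilon=h\delta^{-\sigma/2}$ first and then compare monomials in $\delta$, while the paper substitutes $\delta=O(h^{2/3}\epsilon^{1/3})$ resp. $O(h^{1/2}\epsilon^{1/2})$ into \eqref{prf_err_M2_ex} and optimizes $\epsilon$ last; your route has the small advantage of reproducing verbatim the stated CFL condition $k^2=O(h^2/\delta^{2-\sigma/2})$ and the stated $\delta$-choices (including $\delta=O(h^{1/\sigma})$ in the small-$\sigma$ branches), and your thresholds $\frac65$, $\frac43$ and the rates all check out.

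There is, however, one genuine error in the comparison step. Your claim that $u_h^{(\epsilon)} + C\lambda^{-1}(\epsilon + M_{\epsilon,\delta,k})$ is a \emph{supersolution} of \eqref{eqn:main4} is neither derivable nor useful: mollification preserves subsolutions of the convex sup-type equation (convolve the inequality for each fixed $\alpha$, then take the supremum), but not supersolutions, since $\sup_{\alpha}\big(G_{\alpha}*\rho_{\epsilon}\big)\le \big(\sup_{\alpha}G_{\alpha}\big)*\rho_{\epsilon}$ points the wrong way; and even if granted, comparison would give $u\le u_h^{(\epsilon)}+C\lambda^{-1}(\epsilon+M_{\epsilon,\delta,k})$, i.e.\ a second upper bound on $u-u_h$ duplicating your first comparison and leaving $u_h-u$ uncontrolled. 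The correct second half -- as in the proof of Theorem \ref{thm:estimate_sym_main} that you invoke, cf.\ \eqref{prf:upperbd:gen1} -- is that $u_h^{(\epsilon)} - C\lambda^{-1}(\epsilon + M_{\epsilon,\delta,k})$ is a \emph{subsolution} of \eqref{eqn:main4}, whence $u_h-u\le C(\epsilon+M_{\epsilon,\delta,k})$ by comparison for the equation. With that repaired the proof closes, modulo two harmless overstatements in the bookkeeping: absorbing the $k$-terms $h\delta^{2-\sigma}\epsilon^{-2}$ into $\delta^{p-\sigma}\epsilon^{-(p-1)}$ follows from $h\le\delta$ only when $p=3$; for $p=4$ it needs $h\epsilon\le\delta^{2}$, which fails for $\delta=h^{1/\sigma}$, $\sigma<\frac43$ (there one should instead keep the term and note it equals $\delta^{2}/h=h^{2/\sigma-1}=O(h^{1/2})$). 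Similarly, your check ``$h\le\delta^{\sigma}$'' fails for $\delta=h^{4/(4+\sigma)}$, $\sigma>\frac43$, though this is moot since $h^{2}/(\delta^{\sigma}\epsilon)=\epsilon$ identically under your choice $\epsilon=h\delta^{-\sigma/2}$.
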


\begin{rem} 
\noindent (a) \,  When $\sigma\leq 1$, the error can can not be better than $\mathcal{O} (h^{\frac{1}{2}})$ 
because of the (local) drift term in \eqref{eqn:main4}. In this case the diffusion correction does not improve the rate as it did in Section \ref{monotone scheme}. \smallskip

\noindent (b) \, Under assumption \ref{A8}, we get improved convergence rates for any $\sigma>1$, see Theorem \ref{thm:estimate_sym_main_ex} (b). The rate approaches $\frac{1}{3}$
as $\sigma\to2$, compared to $\frac14$ in part (a). 


\end{rem}

\noindent \textit{Sketch of proof:} The proof is similar to the proof of Theorem \ref{thm:estimate_sym_main}, we only explain the main differences. 
In view of Lemmas \ref{lem:err_local_dif_sym_odd} and \ref{lem:err_local_dif_sym_ex}, 
replacing Lemma \ref{lem:local_trunc1} by Lemma \ref{lem:local_trunc_ex} when estimating \eqref{prf_err1_new}, the constant
$M_{\epsilon, \delta}$ in \eqref{prf_err_M1} gets a $O(\frac h\epsilon)$ contribution from the drift and becomes
\begin{align}\label{prf_err_M1_ex}
  M_{\epsilon,\delta} \:=\left\{
     \begin{array}{@{\space}l@{\thinspace}l}
       \delta^{3-\sigma} \,\frac{1}{\epsilon^2} + h \, \frac{1}{\epsilon} + k^2  \, \delta^{2(2-\sigma)} \,\frac{1}{\epsilon^3}+ \frac{h^2}{k^2} \frac{1}{\epsilon} + \frac{h^2}{\delta^{\sigma}} \, \frac{1}{\epsilon} \quad & \text{for part (a)}, \\\\ 
       \delta^{4-\sigma} \, \frac{1}{\epsilon^3} + h \, \frac{1}{\epsilon} + k^2  \, \delta^{2(2-\sigma)} \, \frac{1}{\epsilon^3} + \frac{h^2}{k^2} \frac{1}{\epsilon} + \frac{h^2}{\delta^{\sigma}} \, \frac{1}{\epsilon}  & \text{for part (b)}.
     \end{array}
   \right.
\end{align}
In case (a) the nonlocal operator is not symmetric, so 
we have used 
Lemma \ref{lem:err_local_dif_sym_ex} (i) to get the first term. 
%
%
%
By \eqref{prf:upperbd:gen1} and \eqref{prf_err1_5} we get 
$|u-u_h|\leq C (\epsilon + M_{\epsilon,\delta})$ and optimize with
respect to $k, \delta,$ and $\epsilon$. First we take $k^2= O \big(\frac{h \epsilon}{\delta^{2-\sigma}}\big)$,
then using $h\leq \delta$, we take $\delta = O \big(h^{\frac{2}{3}}\epsilon^{\frac{1}{3}}\big)$ for part (a) and $\delta = O \big(h^{\frac{1}{2}}\epsilon^{\frac{1}{2}}\big)$ for part (b) to get
\begin{align}\label{prf_err_M2_ex}
 |u-u_h| \leq \left\{
     \begin{array}{@{\space}l@{\thinspace}l}
      C \big( h^{\frac23(3-\sigma)}\epsilon^{-\frac{1}{3}(3+\sigma)} +h\frac{1}{\epsilon} +\epsilon \big) \qquad & \text{for part (a)}, \\ \\
         C \big( h^{\frac12(4-\sigma)}\epsilon^{-\frac12(2+\sigma)} +h\frac{1}{\epsilon} +\epsilon \big) \qquad  & \text{for part (b)}.\\
     \end{array}
   \right.
\end{align}
For part (a), the rate \eqref{estimate_sym_main_gen_ex} follows by choosing $\epsilon= O\big(\max\big\{h^{\frac{1}{2}}, h^{\frac{2(3-\sigma)}{6+ \sigma}}\big\}\big)$, i.e. $\epsilon= O \big(h^{\frac{1}{2}}\big)$ for $0< \sigma \leq \frac56$,  and $\epsilon = O \big( h^{\frac{2(3-\sigma)}{6+ \sigma}}\big)$  for $\frac{5}{6}\leq \sigma <2$.
For part (b), the convergence rate  \eqref{estimate_sym_main_ex} is observed by choosing $\epsilon$ optimally  as $\epsilon =O\big(\max\big\{h^{\frac{1}{2}}, h^{\frac{4-\sigma}{4+ \sigma}}\big\}\big)$, i.e. $\epsilon= O \big(h^{\frac{1}{2}}\big)$ for $0< \sigma \leq \frac{4}{3}$ and $\epsilon = O \big(h^{\frac{4-\sigma}{4+ \sigma}}\big)$ for $\frac{4}{3}\leq \sigma <2$. $\hfill\square$

\end{document}